\documentclass[a4paper,reqno]{amsart}

\usepackage{amsmath}
\usepackage{amssymb}
\usepackage{amsthm}
\usepackage[backend=biber, giveninits=true, maxbibnames=99]{biblatex}
\usepackage{enumitem}
\usepackage{mathtools}
\usepackage[hidelinks, pdfusetitle]{hyperref}
\usepackage[capitalise]{cleveref}

\DeclarePairedDelimiterX{\ha}[2]{\{}{\}}{#1 \,\delimsize|\, #2}
\DeclarePairedDelimiter{\zj}{(}{)}
\DeclarePairedDelimiter{\abs}{|}{|}
\DeclarePairedDelimiter{\haf}{\{}{\}}

\DeclareMathOperator{\diam}{diam}

\newcommand{\N}{\mathbb N}
\newcommand{\Z}{\mathbb Z}
\newcommand{\R}{\mathbb R}
\newcommand{\Q}{\mathbb Q}
\newcommand{\dimH}{\dim_{\mathrm H}}
\newcommand{\dimUB}{\overline{\dim_{\mathrm B}}}
\newcommand{\Hs}{\mathcal H^s}
\newcommand{\uv}{\mathbf u}
\newcommand{\vv}{\mathbf v}
\newcommand{\mv}{\mathbf m}
\newcommand{\qv}{\mathbf q}
\newcommand{\xv}{\mathbf x}

\theoremstyle{plain}
\newtheorem{Thm}{Theorem}
\newtheorem{Prop}[Thm]{Proposition}
\newtheorem{Cor}[Thm]{Corollary}
\newtheorem{Lemma}[Thm]{Lemma}

\theoremstyle{definition}
\newtheorem{Def}[Thm]{Definition}

\newtheorem{Example}[Thm]{Example}

\numberwithin{Thm}{section}

\crefname{Prop}{Proposition}{Propositions}
\crefname{Thm}{Theorem}{Theorems}
\crefname{equation}{}{}

\setlist[enumerate]{label=(\arabic*), ref=(\arabic*)}

\renewbibmacro{in:}{}
\DeclareFieldFormat{pages}{#1}
\DeclareFieldFormat[article]{volume}{\mkbibbold{#1}}
\DeclareFieldFormat[article, inbook]{title}{\mkbibitalic{#1}}
\DeclareFieldFormat{booktitle}{#1}
\DeclareFieldFormat{journaltitle}{#1\isdot}

\title{Lipschitz surjections between self-similar sets}
\author{Attila Gáspár}
\address{Institute of Mathematics, ELTE Eötvös Loránd University, Pázmány Péter s. 1/C, 1117 Budapest, Hungary \newline\indent
HUN-REN Alfréd Rényi Institute of Mathematics, Reáltanoda u. 13-15, 1053 Budapest, Hungary}
\email{gsprati99@gmail.com}

\begin{document}

\begin{abstract}
    We give a characterization of the existence of Lipschitz surjections between dust-like self-similar sets of the same dimension under the assumption that the similarity ratios are commensurable. Our results apply more generally to graph-directed fractals.
\end{abstract}

\maketitle

\section{Introduction}

Recall that a self-similar set is a compact set $X\subset \R^d$ that satisfies the equality $X=\bigcup_{i=1}^n S_i(X)$, where each $S_i : \R^d \to \R^d$ is a contracting similarity. We say that $X$ is \emph{dust-like} if the sets $S_i(X)$ are pairwise disjoint. The set $\Phi={S_1, \dots, S_n}$ is called the \emph{iterated function system} (IFS) generating $X$.

The following result was proved by Balka and Keleti:
\begin{Thm}[{\cite[Corollary 1.7]{BalkaKeleti}}] \label{BK}
    Let $X$ be a dust-like self-similar set, and let $Y$ be a compact metric space such that $\dimH X > \dimUB Y$, where $\dimH$ is the Hausdorff dimension and $\dimUB$ is the upper box dimension. Then there exists a Lipschitz surjection $X \to Y$.
\end{Thm}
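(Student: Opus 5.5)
We build the surjection through the symbolic coding of $X$, approximating $Y$ by finite nets at geometrically shrinking scales and sending cylinders of $X$ onto these nets. Write $X=\bigcup_{i=1}^n S_i(X)$ with ratios $r_i$, and let $s=\dimH X$ be the similarity dimension, so $\sum_i r_i^s=1$. Because $X$ is dust-like, the coding map $\pi:\{1,\dots,n\}^{\N}\to X$ is a bi-Lipschitz homeomorphism when the sequence space carries the ultrametric $\rho(\omega,\omega')=r_{\omega|k}$. Here $k$ is the length of the longest common prefix and $r_{\omega|k}=r_{\omega_1}\cdots r_{\omega_k}$. Bi-Lipschitzness follows from a uniform positive gap $\delta$ between distinct first-level pieces, rescaled to all levels. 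So it suffices to construct a Lipschitz surjection $f:(\Sigma,\rho)\to Y$.

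\textbf{Choosing the scales.} Fix $t$ with $\dimUB Y<t<s$. For small $\varepsilon$, $Y$ can be covered by at most $\varepsilon^{-t}$ sets of diameter at most $\varepsilon$. Fix a small $\lambda\in(0,1)$. For $\varepsilon_m=\lambda^m$, choose finite $\varepsilon_m$-nets $N_m\subset Y$, nested in the sense that every point of $N_{m+1}$ lies within $\varepsilon_m$ of a point of $N_m$. This gives a tree $T$: each $y\in N_{m+1}$ has a chosen parent in $N_m$, and $y$ has at most $C\lambda^{-t}$ children, with $C$ independent of $m$. The main step is to build a surjective "tree map" from the cylinder tree of $\Sigma$ onto $T$. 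The $m$-th level of $T$ should correspond to the antichain $A_m$ of words $w$ with $r_w\approx\lambda^m$, more precisely $\lambda^m r_{\min}<r_w\le\lambda^m$. Every $w\in A_m$ gets assigned a point $y(w)\in N_m$. Each $v\in A_{m+1}$ extending $w$ gets a child of $y(w)$, and all children must be hit.

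\textbf{The counting obstacle.} We must ensure that the words of $A_{m+1}$ extending a given $w\in A_m$ are at least as many as the children of $y(w)$. Their number is at least $c\lambda^{-s}$, since $\sum_{v} r_v^s = r_w^s$ and each $r_v\le \lambda r_w/r_{\min}$. Since $t<s$, choosing $\lambda$ small gives $c\lambda^{-s}\ge C\lambda^{-t}$. The assignment is surjective level by level, because every point of $N_m$ is some $y(w)$ by induction. I expect this to be the main obstacle: the antichains depend on the uneven ratios, and the constants $c$ and $C$ must be uniform in $m$. The inequality $\dimH X>\dimUB Y$ is used exactly here, and strictly, to absorb these constants.

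\textbf{Defining $f$ and the Lipschitz bound.} For $\omega\in\Sigma$, let $w_m$ be its prefix in $A_m$ and set $f(\omega)=\lim_m y(w_m)$. The limit exists because consecutive terms are within $\varepsilon_m$, so the sequence is Cauchy and $Y$ is complete. Surjectivity follows since every $y\in Y$ is a limit of a branch of $T$, the tree map is onto, and compactness of $\Sigma$ handles the limit. For the Lipschitz bound, suppose $\omega,\omega'$ share their prefix in $A_m$ but not in $A_{m+1}$. Then $\rho(\omega,\omega')\ge\lambda^{m+1}r_{\min}$. Both images lie within $\sum_{j\ge m}\varepsilon_j\lesssim\lambda^m$ of $y(w_m)$. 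Hence $d(f(\omega),f(\omega'))\lesssim\lambda^{-1}r_{\min}^{-1}\rho(\omega,\omega')$. Composing with $\pi^{-1}$ gives the Lipschitz surjection $X\to Y$.
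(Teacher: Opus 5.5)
\textbf{The gap.} The step that fails is the claim that every $y\in N_m$ has at most $C\lambda^{-t}$ children, with $C$ independent of $m$. This does not follow from $\dimUB Y<t$. Box counting bounds only the total number of net points, $\#N_m\le C_0\lambda^{-mt}$. The number of points of $N_{m+1}$ within $\varepsilon_m$ of a single point of $N_m$ is instead a local, Assouad-type quantity, and it can be unbounded even when $\dimUB Y=0$.

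For example, let $Y=\{0\}\cup\{2^{-k^2}e_j : k\ge 1,\ j\in J_k\}\subset\ell^2$, with pairwise disjoint index sets $J_k$ and $\#J_k=k$. For $2^{-(k+1)^2}\le\varepsilon<2^{-k^2}$, at most $(k+1)^2+1$ sets of diameter at most $\varepsilon$ cover $Y$, so $\dimUB Y=0$. However, the $k$ points $2^{-k^2}e_j$, $j\in J_k$, lie at distance $2^{-k^2}$ from $0$ and at mutual distance $\sqrt2\cdot 2^{-k^2}$. One checks that with maximal separated nets (any $\lambda<1/2$, $k$ large), some point of $N_m$ near $0$ is the parent of at least $k/2$ points of $N_{m+1}$, for an $m$ with $\varepsilon_m\approx 2^{-k^2}$.

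In your scheme, all children of $y(w)$ must be reached by extensions of the single word $w$. But $w$ has fewer than $(\lambda r_{\min})^{-s}$ extensions in $A_{m+1}$, so the induction breaks at such nodes. You placed the obstacle on the side of $X$, but the uneven ratios are harmless: your bound with $c=r_{\min}^s$ already handles them. The real non-uniformity is in $Y$.

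\textbf{A repair.} The global count is in fact enough, but you have to let many words share a value and allocate words according to future demand.
First, take $A_m=\{w : r_w\le\lambda^m<r_{w^-}\}$, where $w^-$ is $w$ with its last letter deleted; the set of all words with $\lambda^m r_{\min}<r_w\le\lambda^m$ need not be an antichain. Also make each $y\in N_m$ its own child, e.g.\ by taking $N_m\subseteq N_{m+1}$. This keeps $\#N_m\le C_0\lambda^{-mt}$ and ensures no node is childless, which your scheme also tacitly needs.

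Next, put $K=r_{\min}^s\lambda^{-s}/2$ and, for $y\in N_m$, define $D(y)=\sum_{j\ge 0}K^{-j}\,\#\{\text{descendants of } y \text{ in } N_{m+j}\}$. Then $D(y)=1+K^{-1}\sum_z D(z)$, summing over the children $z$ of $y$. Since $D(z)\ge 1$, this gives $\sum_z\lceil D(z)\rceil\le 2K(D(y)-1)$. Hence, if at least $D(y)$ words of $A_m$ are sent to $y$, their pooled extensions in $A_{m+1}$ (at least $2K$ per word) can be distributed so that every child $z$ receives at least $D(z)$ words. By induction, every net point is hit.

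The only input from $Y$ is the finiteness of $D$ at the top level:
$\sum_j K^{-j}\#N_j\le C_0\sum_j\bigl(2\lambda^{s-t}r_{\min}^{-s}\bigr)^j<\infty$ for small $\lambda$. This is precisely where $t<s$ enters. Start the correspondence at a deep enough antichain of the symbol space; this only changes the Lipschitz constant by a fixed factor. Your limit, surjectivity and Lipschitz arguments then go through unchanged, since they use only that $y(v)$ is within $\varepsilon_m$ of $y(w)$ whenever $v\in A_{m+1}$ extends $w\in A_m$.

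\textbf{Relation to the paper.} The paper gives no proof of this statement; it is quoted from Balka and Keleti. The closest argument in the paper is the construction in Section~5, which has the same architecture as yours. There too, surjectivity is secured by global bookkeeping, namely the identity $\Hs(f_m^{-1}(C))=cp^{l_{-1}}\Hs(C)$, rather than by a per-cell branching bound.
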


It follows from \cref{BK} that if $X$ and $Y$ are both dust-like self-similar sets, then $\dimH X > \dimH Y$ implies that there exists a Lipschitz surjection $X \to Y$. It is also clear from the properties of the Hausdorff dimension that no such map exists when $\dimH X < \dimH Y$.

However, when $\dimH X = \dimH Y$, only partial results are known. Under the additional assumption that $X$ is homogeneous (i.e. the similarities generating $X$ have the same ratio), it was shown in \cite[Theorem 1.17]{BalkaKeleti} that a Lipschitz surjection $X \to Y$ exists if and only if a bi-Lipschitz map $X \to Y$ exists. Recall that a bi-Lipschitz map is a Lipschitz bijection with a Lipschitz inverse.

A similar result was later given by Ruan and Xiao \cite{RuanXiao}: they proved the same equivalence assuming the homogeneity of $Y$ rather than $X$. Consequently, we have the following equivalence:
\begin{Thm}[{\cite[Theorem 1.17]{BalkaKeleti}} and {\cite[Theorem 1.5(1)]{RuanXiao}}] \label{BK_equiv}
    Let $X$ and $Y$ be dust-like self-similar sets, and assume that either $X$ or $Y$ is homogeneous. If $\dimH X = \dimH Y$, then the following are equivalent:
    \begin{enumerate}
        \item There exists a Lipschitz surjection $X \to Y$.
        \item There exists a bi-Lipschitz map $X \to Y$.
    \end{enumerate}
\end{Thm}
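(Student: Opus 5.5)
The implication (2)$\Rightarrow$(1) is immediate, since a bi-Lipschitz map is in particular a Lipschitz surjection. All the work is in (1)$\Rightarrow$(2). Throughout, let $s=\dimH X=\dimH Y$.

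\textbf{Setup and measure transport.} Because both sets are dust-like, the open set condition holds. Hence $0<\Hs(X),\Hs(Y)<\infty$, and both measures are Ahlfors $s$-regular. Let $f\colon X\to Y$ be an $L$-Lipschitz surjection. The pushforward $f_*(\Hs|_X)$ is comparable to $\Hs|_Y$ from above: for every Borel $B\subset Y$,
\[
\Hs(B)\le L^s\,\Hs(f^{-1}(B)).
\]
The first aim is to show that $f$ is essentially injective on a large piece of $X$, with lower Lipschitz control there.

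\textbf{Blow-up / compactness.} Next I would use the dust-like structure to pass to limits. Fix a cylinder $X_w=S_w(X)$ and rescale it back to $X$. Since the images $f(X_w)$ are compact subsets of $Y$ of diameter $\le L\,\mathrm{diam}(X_w)$, each can be rescaled by a similarity of $Y$'s IFS into a piece of bounded size. By Arzel\`a--Ascoli, the maps $S'^{-1}_{v}\circ f\circ S_w$ (rescaled to $X$) then have limit maps that are still $L$-Lipschitz.

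This is where homogeneity enters. If $X$ (respectively $Y$) is homogeneous, the rescaling factors on that side form a single geometric sequence $r^k$. This lets me match the levels of the two IFSs up to a bounded factor, and the limiting objects are again self-similar sets of the same type.

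A density argument is needed at this point. Using the Lebesgue density theorem for $\Hs$ and the counting inequality above, I would find a cylinder $X_w$ and a cylinder $Y_v$ with $f(X_w)\supset Y_v$ such that
\[
\Hs\bigl(f^{-1}(Y_v)\cap X_w\bigr)\le (1+\varepsilon)\,L^{-s}\text{-comparable to }\Hs(Y_v).
\]
In words, preimages cannot be too large in measure. Applying a pigeonhole argument over the finitely many first-level pieces then shows that distinct first-level pieces of $Y_v$ must have preimages in well-separated unions of cylinders.

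\textbf{Combinatorial matching and building the bi-Lipschitz map.} The previous step produces a matching between the cylinders of $X$ at one level and those of $Y$ at a comparable level. The matching uses counts $\#\{\text{cylinders}\}$ weighted by $r^{s}$, which must agree exactly because the dimensions are equal. This is the main obstacle: one has to turn the measure inequality, which only gives ``$\le$ up to constants'', into an exact equality of the combinatorial data.

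The plan is to iterate the inequality along the cylinder tree and use $\Hs(X)$-total-mass equality. Any strict inequality at one level would propagate geometrically and contradict surjectivity at small scales.

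Once a level-preserving bijection of cylinder trees with uniformly bounded ratio distortion has been obtained, the standard coding map gives the bi-Lipschitz map. Concretely, one composes the symbolic isomorphism with the coding maps of $X$ and $Y$. This is bi-Lipschitz because, for dust-like sets, distance is comparable to the diameter of the smallest common cylinder.
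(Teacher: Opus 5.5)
The implication $(2)\Rightarrow(1)$ is fine. For $(1)\Rightarrow(2)$ the proposal has a genuine gap exactly where the content lies. Note that the paper does not prove this statement; it quotes it from Balka--Keleti and Ruan--Xiao. Your only quantitative input, $\Hs(B)\le L^s\Hs(f^{-1}(B))$, bounds preimage mass from \emph{below}, and no density argument produces the upper bound you claim. The density of $f_*\Hs$ with respect to $\Hs|_Y$ can be far above $L^{-s}$ everywhere. For the middle-thirds Cantor set $C$, the map equal to $3x$ on $C\cap[0,\frac13]$ and to $3x-2$ on $C\cap[\frac23,1]$ is a $3$-Lipschitz surjection $C\to C$ with $L^{-s}=\frac12$ but density $1$. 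Moreover, $f_*\Hs$ may have a singular part when $f$ collapses cylinders. At best a density or martingale argument shows that $\Hs(f^{-1}(Y_v))/\Hs(Y_v)$ stabilises along a.e.\ point, and this gives no injectivity. The separation of preimages of distinct first-level pieces is automatic from $d(f^{-1}(B_1),f^{-1}(B_2))\ge d(B_1,B_2)/L$ and says nothing either. The rigidity you need comes from an extremal choice (Ruan--Xiao, and \cref{multipart_surj}, \cref{measure_lin}, \cref{measure_lin_cover} here). Among $\tilde L$-Lipschitz surjections onto $Y_u$ from finite unions of scaled copies of the $X_v$, take one minimizing the domain measure. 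This gives exact measure-linearity $(\tilde f_u)_*\Hs=\tilde c\,\Hs$ and the cell-covering property. Even granting that, ``any strict inequality would propagate geometrically and contradict surjectivity'' is not an argument for an exact combinatorial matching.

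More fundamentally, the only role your sketch gives homogeneity cannot suffice: that cell sizes on one side form a single geometric sequence, so scales match up to a bounded factor. In the Example of the introduction, every cell size on both sides is a power of $2$, the dimensions agree, and Lipschitz surjections exist in both directions. Still, no bi-Lipschitz map exists, since $\Z[\varphi^{-1}]\ne\Z[\sqrt5]$. Nothing in your sketch distinguishes this from the homogeneous case, so it would prove a false statement. Homogeneity has to enter arithmetically: if, say, $\Psi$ consists of $N$ maps of ratio $\rho$, then $p_\Psi=\rho^s=1/N$.

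With the paper's results, a complete route is as follows (for $s>0$; $s=0$ is trivial).
By \cref{equiv_ifs}, $R(\Phi)\cup R(\Psi)$ is commensurable, so $p_\Phi^a=N^{-b}$ for some positive integers $a,b$. Hence every Galois conjugate of $p_\Phi$ has the form $p_\Phi\omega$ with $\abs{\omega}=1$. It is also a root of $1-\sum_i x^{k_i}$, so $\sum_i p_\Phi^{k_i}\omega^{k_i}=1=\sum_i p_\Phi^{k_i}$. This forces $\omega^{k_i}=1$ for all $i$, hence $\omega=1$, because $\gcd(k_1,\dots,k_n)=1$. Therefore $p_\Phi\in\Q$. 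The rational root theorem gives $p_\Phi=1/q$ with $q^a=N^b$, so $\Z[p_\Phi]=\Z[1/N]=\Z[p_\Psi]$. \cref{XiXiong} then yields the bi-Lipschitz map, and the case of homogeneous $X$ is symmetric.
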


Our goal is to give a characterization of the existence of a Lipschitz surjection between dust-like self-similar sets under an assumption weaker than homogeneity.
\begin{Def}
    A set $R\subseteq \R^+$ is \emph{commensurable} if $\log r_1 / \log r_2 \in \Q$ for every $r_1, r_2 \in R$.
\end{Def}

For an IFS $\Phi$ consisting of similarities, denote by $R(\Phi)$ the set of ratios of the similarities in $\Phi$. Our main result is the following:
\begin{Thm} \label{equiv_ifs}
    Let $X$ and $Y$ be dust-like self-similar sets generated by the IFSs $\Phi$ and $\Psi$, and assume that $\dimH X = \dimH Y = s > 0$. Furthermore, assume that either $R(\Phi)$ or $R(\Psi)$ is commensurable. Then the following are equivalent:
    \begin{enumerate}
        \item \label{equiv_ifs.surj} There exists a Lipschitz surjection $X \to Y$.
        \item \label{equiv_ifs.commensurable} The set $R(\Phi) \cup R(\Psi)$ is commensurable.
    \end{enumerate}
\end{Thm}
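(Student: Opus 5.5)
The plan is to prove the two implications separately, working throughout with the symbolic coding of dust-like sets. For dust-like sets, the distance between two points is comparable to the diameter of the smallest cylinder containing both. Hence a map between such sets is Lipschitz as soon as it is induced by a map of coding trees that sends each cylinder into a cylinder of comparable or larger diameter.

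\emph{\ref{equiv_ifs.commensurable}$\Rightarrow$\ref{equiv_ifs.surj}.} If $R(\Phi)\cup R(\Psi)$ is commensurable, there is $r\in(0,1)$ with every ratio of either IFS of the form $r^{k}$, $k\in\N$. I would recode both sets as graph-directed sets in which every edge has ratio exactly $r$. To do this, subdivide a map of ratio $r^k$ into $k$ steps using auxiliary "waiting" vertices. Then $X$ and $Y$ become attractors of graph-directed systems with a common homogeneous ratio $r$. Their adjacency matrices $A_X, A_Y$ are nonnegative integer matrices with the same Perron eigenvalue $\lambda=r^{-s}$, and $s>0$ gives $\lambda>1$. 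In these terms a Lipschitz surjection corresponds to the following combinatorial object: an assignment sending each level-$n$ node of the $X$-tree to a level-$(n-c)$ node of the $Y$-tree, for a fixed $c$, such that for every $X$-node $u$ the children of $u$ are sent onto all the children of the image of $u$. Equivalently, I would look for an integer "weight" flow between the vertex types of the two graphs that is compatible with $A_X$ and $A_Y$.

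The natural tool for this construction is the natural measures $\mu$ on $X$ and $\nu$ on $Y$. Both take cylinder values in $\lambda^{-n}$ times a fixed finitely generated set, by the Perron--Frobenius eigenvectors. After passing to a power $A^L$, each $X$-node has about $a_i\lambda^{L}$ descendants at depth $L$ and each $Y$-node about $b_j\lambda^{L}$. I would match measure rather than single nodes. Choose $c$ so that $\mu$-masses dominate the corresponding $\nu$-masses by a large factor. Then distribute each $X$-node's descendants greedily among the descendants of one $Y$-node, allowing a bounded number of $X$-nodes to share a $Y$-node. I expect this exact splitting to be the main obstacle. It is where commensurability is really used: it guarantees that the masses live in a common discrete lattice, so the rounding errors at each level stay bounded and do not accumulate.

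\emph{\ref{equiv_ifs.surj}$\Rightarrow$\ref{equiv_ifs.commensurable}.} Assume $f:X\to Y$ is an $L$-Lipschitz surjection and that, say, $R(\Phi)$ is commensurable; the other case is symmetric. First, using $\Hs(f(A))\le L^s\Hs(A)$, surjectivity and $0<\Hs(X),\Hs(Y)<\infty$, I would run a density and zooming argument. It should produce a cylinder $X'$ of $X$ on which $f$ is bi-Lipschitz onto a set that contains a cylinder $Y'$ of $Y$ up to a compactness argument. Heuristically, if $f$ were far from injective at all small scales, the measure inequality would be strict with a uniform gain, contradicting equality of dimensions after iteration through self-similarity. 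Blow-ups via Arzelà--Ascoli then give a bi-Lipschitz equivalence between a finite union of cylinders of $X$ and one of $Y$. Finally, I would invoke the Falconer--Marsh type invariant for bi-Lipschitz equivalence of dust-like sets: the multiplicative groups generated by $r^s$-powers must satisfy $\Q$-rational relations. Together with commensurability of $R(\Phi)$, this should force every ratio of $\Psi$ to be a rational power of $r$, i.e.\ $R(\Phi)\cup R(\Psi)$ is commensurable.
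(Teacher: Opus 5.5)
Both directions have a genuine gap. In the construction you identify the crux but don't supply it, and in the necessity direction your central claim is false.

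\emph{Direction \ref{equiv_ifs.commensurable}$\Rightarrow$\ref{equiv_ifs.surj}.} Reducing to a common homogeneous ratio is the right start, and you correctly name exact splitting as the main obstacle. But you do not resolve it, and the greedy mechanism you propose does not work. Along each branch of $Y$, the ratio of the $X$-mass assigned to a cell to the measure of that cell is a martingale: the children's ratios average exactly to the parent's. The level shift, and hence the Lipschitz constant, must stay bounded, so the $X$-pieces you split with have a fixed size relative to the $Y$-cells. Every inexact split therefore pushes some child strictly below its parent, and nothing in a greedy rule stops these deficits from compounding along a single branch. Domination by a large factor only postpones the moment when some cell can no longer be covered. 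The paper instead keeps the ratio exactly constant, $\Hs(f_m^{-1}(C))=cp^{l_{-1}}\Hs(C)$. It does so with Lemma~\ref{mass_decomp}: a union of level-$k$ cells can be split, $k_0$ levels deeper, into pieces of any prescribed masses in $p^k\Omega$, where $\Omega\subseteq\Z[p]$ is finite. Your sketch is missing the ingredients of that lemma:
\begin{itemize}
\item the normalized measure vectors of both systems lie in $\Q[p]$, since $p^{-1}$ is their common Perron root (Proposition~\ref{vector_in_ring});
\item denominators are cleared by an integer $c$, paid for by mapping $c$ disjoint copies of $X$ inside $X$ onto $Y$;
\item every element of $\Z[p]$ has the form $p^k\vv\mv$ with $\vv$ an integer vector (Lemma~\ref{int_decomp}), and this is made positive via $(pA)^l\to\mv\qv$;
\item Lemma~\ref{sum_decomp} reduces everything to bounded masses.
\end{itemize}
The positivity step needs primitivity, which a single common subdivision ratio can destroy; this is why the paper uses Propositions~\ref{commensurable_to_homogeneous} and~\ref{homogeneous_power}. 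Finally, $\Z[p]$ is dense in $\R$ for irrational $p$, so a ``common discrete lattice'' is not the right picture.

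\emph{Direction \ref{equiv_ifs.surj}$\Rightarrow$\ref{equiv_ifs.commensurable}.} Your central claim is that zooming and blow-ups yield a piece on which $f$ is bi-Lipschitz onto a cylinder. This is false, and so is the heuristic behind it. The bound $\Hs(f(A))\le L^s\Hs(A)$ lets $f$ expand measure, so folding at every scale can be compensated without any dimensional contradiction.

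Here is a counterexample. Let $X=Y$ be a two-map Cantor set coded by $\{0,1\}^{\N}$. To each level-$m$ cell $C$ of $Y$ assign a pair $\{B,B'\}$ of level-$(m+1)$ cells of $X$; the root gets the two level-$1$ cells. Pass to $C0,C1$ the pairs $\{B0,B1\},\{B'0,B'1\}$ if the address of $C$ ends in $0^h$ (with $h\ge 2$), and $\{B0,B'0\},\{B1,B'1\}$ otherwise. This defines a Lipschitz surjection with $f_*\Hs=\Hs$. Yet below any cell $E$ of $X$ you reach, within $h+1$ steps, a node whose pair lies inside $E$. Following the branch $1^\infty$ from that node gives a point with two preimages in $E$. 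So $f$ is injective on no cylinder, nor on any clopen set that it maps onto a cylinder. The rule is shift-invariant, so every blow-up limit has the same defect. Your reduction to Falconer--Marsh thus fails even when $X=Y$.

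The paper never extracts bi-Lipschitz pieces. When $R(\Phi)$ is commensurable, the density martingale takes values $r^{sl_P}\vv_P\mv/\Hs(Y_P)$ with $\vv_P$ in a finite set of integer vectors (Propositions~\ref{vec_finite} and~\ref{diff_bound}). Hence it is eventually constant along a.e.\ branch. Comparing two repetitions of a cycle $C$ with equal $\vv_P$ then gives $r_C\in r^{\Q}$. When only $R(\Psi)$ is commensurable, the situation is not symmetric, contrary to your remark. There the paper uses minimal-mass surjections from finite unions of rescaled pieces, which are measure-linear and satisfy a covering property (Propositions~\ref{measure_lin} and~\ref{measure_lin_cover}). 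It then pigeonholes over the preimages of the cells $B_n\subseteq\tilde f_u(A_n)$ along $C^n$.
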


A remarkable property of this result is that condition \ref{equiv_ifs.commensurable} is symmetric in $X$ and $Y$. Thus, we have the following, somewhat surprising corollary:
\begin{Cor} \label{surj_reverse}
    Let $X$ and $Y$ be dust-like self-similar sets, and assume that either $X$ or $Y$ has commensurable ratios. If $\dimH X = \dimH Y$ and there exists a Lipschitz surjection $X \to Y$, then there is also a Lipschitz surjection $Y \to X$.
\end{Cor}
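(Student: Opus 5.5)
The plan is to deduce the corollary directly from \cref{equiv_ifs}, using that condition \ref{equiv_ifs.commensurable} is symmetric in the two sets. Let $\Phi$ and $\Psi$ be the IFSs generating $X$ and $Y$, so that by hypothesis $R(\Phi)$ or $R(\Psi)$ is commensurable. Set $s=\dimH X=\dimH Y$.

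First I dispose of the degenerate case $s=0$, which \cref{equiv_ifs} excludes. A dust-like self-similar set generated by at least two maps is uncountable, so it has positive Hausdorff dimension (Moran's formula $\sum r_i^s=1$ gives $s>0$). Hence $s=0$ forces each IFS to consist of a single map, so $X$ and $Y$ are single points. In that case the constant map $Y\to X$ is a Lipschitz surjection. (If the convention is that an IFS has at least two maps, this case does not arise at all.)

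Now assume $s>0$. \Cref{equiv_ifs} applies to the pair $(X,Y)$ with IFSs $(\Phi,\Psi)$: the dimensions agree and one of the ratio sets is commensurable. The given Lipschitz surjection $X\to Y$ is condition \ref{equiv_ifs.surj}, so condition \ref{equiv_ifs.commensurable} holds: $R(\Phi)\cup R(\Psi)$ is commensurable.

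Next I apply \cref{equiv_ifs} to the swapped pair $(Y,X)$ with IFSs $(\Psi,\Phi)$. Its hypotheses still hold, since $\dimH Y=\dimH X=s>0$ and one of $R(\Psi), R(\Phi)$ is commensurable. Condition \ref{equiv_ifs.commensurable} for this pair reads ``$R(\Psi)\cup R(\Phi)$ is commensurable'', which is the same statement as before and so holds. The implication \ref{equiv_ifs.commensurable}$\Rightarrow$\ref{equiv_ifs.surj} then gives a Lipschitz surjection $Y\to X$.

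There is no real obstacle here, since all the work is contained in \cref{equiv_ifs}. The only points needing care are checking that the hypotheses of the theorem are preserved under swapping the roles of $X$ and $Y$, and handling the $s=0$ case separately.
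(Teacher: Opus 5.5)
Your proposal is correct and matches the paper's argument: apply \cref{equiv_ifs} to $(X,Y)$ and then to $(Y,X)$, using that condition \ref{equiv_ifs.commensurable} is symmetric and that the hypotheses survive the swap. Your separate handling of $s=0$ is a detail the paper leaves implicit. There the justification should be the Moran equation $\sum r_i^s=1$, which is valid under the strong separation condition, since uncountability alone does not force positive Hausdorff dimension.
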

We do not know whether \cref{surj_reverse} holds if we do not assume the commensurability of either $X$ or $Y$.

The commensurability assumption was considered previously in the literature concerning the bi-Lipschitz equivalence of self-similar sets. The following result of Xi and Xiong gives a complete characterization when the similarity ratios are commensurable. For an IFS $\Phi$ with commensurable ratios, we can write the ratios of the similarities in the form $r_\Phi^{k_1}, \dots, r_\Phi^{k_n}$, where $r_\Phi \in (0, 1)$ and $\operatorname{gcd}(k_1, \dots, k_n)=1$. We define the \emph{measure root} $p_\Phi=r_\Phi^s$, where $s$ is the similarity dimension of $\Phi$.
\begin{Thm}[{\cite[Theorem 1.1]{XiXiong}}] \label{XiXiong}
    Let $X$ and $Y$ be dust-like self-similar sets generated by the IFSs $\Phi$ and $\Psi$, and assume that $\Phi$ and $\Psi$ both have commensurable ratios. Then the following are equivalent:
    \begin{enumerate}
        \item There exists a bi-Lipschitz map $X \to Y$.
        \item $\dimH X = \dimH Y$, $\log r_{\Phi}/\log r_{\Psi} \in \Q$ and $\Z[p_{\Phi}]=\Z[p_{\Psi}]$.
    \end{enumerate}
\end{Thm}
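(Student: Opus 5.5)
The plan is to prove both implications by passing to symbolic models, since dust-like self-similar sets with commensurable ratios are bi-Lipschitz equivalent to ultrametric coding spaces. First fix notation. Write the ratios of $\Phi$ as $r_\Phi^{k_1},\dots,r_\Phi^{k_n}$. The equation $\sum_i r_\Phi^{k_i s}=1$ says that $p_\Phi$ is a root of $1-\sum_i z^{k_i}$. Let $\mu_X$ be the normalized $\Hs$ measure on $X$. Then every cylinder $S_{i_1}\circ\dots\circ S_{i_m}(X)$ has $\mu_X$-measure $p_\Phi^{k_{i_1}+\dots+k_{i_m}}$. Because the pieces are separated, every open-closed subset of $X$ is a finite disjoint union of cylinders, so its measure lies in $\Z[p_\Phi]$. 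Define the same objects for $\Psi$ and $Y$.

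\emph{(1)$\Rightarrow$(2).} Equality of dimensions is immediate, because bi-Lipschitz maps preserve $\dimH$. Let $f:X\to Y$ be bi-Lipschitz. The first step is to show that $f$ maps every sufficiently small cylinder of $X$ onto a finite union of cylinders of $Y$. This uses the uniform gap between distinct first-level pieces of $Y$ at each scale: a clopen set of diameter $\delta$ is separated by a gap comparable to $\delta$, and the bi-Lipschitz map preserves such gaps up to constants. Next, compare the density $d(f_*\mu_X)/d\mu_Y$, which is bounded above and below. A Lebesgue density / self-similarity (blow-up) argument shows that this density is essentially constant, equal to some $c$, on a cylinder of $Y$ after rescaling by both IFSs. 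Iterating the map inside this cylinder gives relations of the form $c\,p_\Phi^{a}\in\Z[p_\Psi]$ for all large $a$, and symmetrically. From these relations I will deduce $\Z[p_\Phi]=\Z[p_\Psi]$. The comparison of scales $r_\Phi^{a}\asymp r_\Psi^{b}$ along cylinder sizes forces $\log r_\Phi/\log r_\Psi\in\Q$; otherwise the sizes would be dense modulo the group generated by the ratios, and the bounded distortion of $f$ would contradict this.

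\emph{(2)$\Rightarrow$(1).} Since $\log r_\Phi/\log r_\Psi\in\Q$, we may replace both IFSs by suitable iterates that share a common base ratio $r$. Equal dimension then gives a common power relation between $p_\Phi$ and $p_\Psi$. Now I organize $X$ and $Y$ into trees whose levels correspond to cylinder diameter $\approx r^m$. A bijection that is defined level by level and respects this tree structure up to a bounded level shift is automatically bi-Lipschitz for the ultrametric. So it suffices to match clopen pieces of $X$ with clopen pieces of $Y$ of the same measure and comparable diameter, recursively.

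The main obstacle is exactly this matching step, and it is where $\Z[p_\Phi]=\Z[p_\Psi]$ is used. I would first prove a ``cutting lemma'': for every $\alpha\in\Z[p]\cap(0,\infty)$ with $\alpha<\mu(\text{piece})$, a cylinder of $X$ can be split into finitely many cylinders, all of size comparable to the piece, so that some sub-union has measure exactly $\alpha$. This uses the fact that $p$ is a Perron-type root of $1-\sum z^{k_i}$, so positive elements of $\Z[p]$ are positive integer combinations of high powers of $p$. Applying the lemma in both directions, I build the correspondence by a back-and-forth recursion, splitting at each stage so that the diameter ratios stay bounded. The delicate point is controlling these bounds uniformly across levels. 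I expect to achieve this by working with a finite list of ``types'' of pieces and using the recurrence structure to keep the number of types finite.
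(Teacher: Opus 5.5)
The paper does not prove this statement; it quotes it from Xi and Xiong. Your sketch therefore has to stand on its own. Your preliminaries are fine (clopen sets have measure in $\Z[p_\Phi]$, homeomorphisms send cylinders to finite unions of cylinders), but in both directions the decisive step is either wrong or only announced.

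\textbf{Necessity.} Your argument for $\log r_\Phi/\log r_\Psi\in\Q$ fails. Bi-Lipschitz distortion is a coarse multiplicative condition. For every $a$ there is a $b$ with $r_\Psi^{b+1}<r_\Phi^{a}\le r_\Psi^{b}$, so density of $\{a\log r_\Phi-b\log r_\Psi\}$ contradicts nothing. Commensurability is only detected by \emph{exact} measure identities combined with discreteness. Since a bi-Lipschitz map is a Lipschitz surjection, this part is exactly \ref{equiv.surj}$\Rightarrow$\ref{equiv.commensurable} of Theorem~\ref{equiv}. There, the jumps of $g_l$ are $0$ or at least $\varepsilon$ (Proposition~\ref{diff_bound}), so $g_l(y)$ is eventually constant along a generic coding, and a pigeonhole over the repeated cycles $C^n$ gives $r_C\in r^{\Q}$.

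Your measure-linear piece is also only asserted. Lebesgue differentiation gives approximate constancy of $d(f_*\mu_X)/d\mu_Y$, but your algebraic relations need $f_*\mu_X=c\,\mu_Y$ exactly on some cylinder $Y_J$. The simplest route reverses your order:
\begin{enumerate}
\item First obtain commensurability as above.
\item Make both systems homogeneous with a common ratio (Propositions~\ref{commensurable_to_homogeneous} and~\ref{homogeneous_power}).
\item Then $\mu_X(f^{-1}(Y_J))/\mu_Y(Y_J)$ takes only finitely many values, as in Proposition~\ref{vec_finite}, using that $f^{-1}$ is Lipschitz.
\item A minimizing cylinder exists. Its value is the $\mu_Y$-weighted average of its children's values, so all descendants share it, and $f$ is measure-linear on $Y_J$.
\end{enumerate}

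Your step ``from these relations I will deduce $\Z[p_\Phi]=\Z[p_\Psi]$'' also works only if both relations come from this one piece with the same $c$. Injectivity gives $c^{-1}\Z[p_\Phi]\subseteq\Z[p_\Psi]$, and preimages of cylinders give $c\,\Z[p_\Psi]\subseteq\Z[p_\Phi]$. This uses that $p_\Phi,p_\Psi$ are units (since $\sum_i p^{k_i}=1$) and that $\gcd(k_i)=1$ makes all large exponents occur. Hence $c\,\Z[p_\Psi]=\Z[p_\Phi]$, and comparing multiplier rings gives equality. Relations with unrelated constants, obtained separately for $f$ and $f^{-1}$, do not suffice: $2\Z[\varphi]\subseteq\Z[\sqrt5]\subseteq\Z[\varphi]$, which is the pair of rings in the Example of the introduction.

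\textbf{Sufficiency.} The whole difficulty sits in the step you defer, and your cutting lemma is false in the uniform form you need. Masses realizable at bounded relative depth inside a piece of measure $\mu$ form a finite set, while $\Z[p]\cap(0,\mu)$ is infinite. So the required depth is unbounded, and an unbounded level shift destroys the bi-Lipschitz bound.

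The correct tool is Xi and Xiong's mass decomposition lemma (Lemma~\ref{mass_decomp}). If the normalized masses $p^{-k}b_i$ lie in a \emph{fixed finite} set $\Omega\subseteq\Z[p]$, a union of level-$k$ cells splits into pieces of masses $b_i$ built from level-$(k+k_0)$ cells, with $k_0$ independent of $k$. Its proof needs Lemma~\ref{sum_decomp} to reduce to bounded total mass, and the Perron--Frobenius limit $(pA)^l\to\mv\qv$ to make integer vectors positive.

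Your back-and-forth must apply this lemma alternately in $X$ (with $p_\Phi$) and in $Y$ (with $p_\Psi$). This is where $\Z[p_\Phi]=\Z[p_\Psi]$ enters. Together with $p_\Phi^u=p_\Psi^v$ and the unit property, it places the normalized cell masses of each set in a finite subset of the other set's ring. The cutting must be done in $\Z[p_\Phi]$ and $\Z[p_\Psi]$ themselves. Passing to a common ratio $\rho=r_\Phi^u=r_\Psi^v$ replaces $\Z[p_\Phi]$ by $\Z[p_\Phi^u]$, which can be strictly smaller: in the Example, $\Z[\varphi^{-3}]=\Z[\sqrt5]\subsetneq\Z[\varphi^{-1}]$. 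With these ingredients the alternating refinement has bounded level shift and yields the bi-Lipschitz bijection. As written, your proposal does not yet contain that construction.
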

We remark that in \cite{XiXiongOSC}, Xi and Xiong obtained a generalization of this result for totally disconnected self-similar sets with commensurable ratios that satisfy the open set condition, however, the condition for the equivalence is more complicated.

In \cite{BalkaKeleti}, Balka and Keleti asked whether \cref{BK_equiv} holds without the homogeneity assumption. A counterexample was given by Ruan and Xiao in \cite[Example 6.2]{RuanXiao}. We will now give a simpler counterexample using our characterization in \cref{equiv_ifs}.
\begin{Example}
    Consider the IFSs
    \begin{align*}
        \Phi &= \haf[\Big]{x \mapsto \frac{x}{4}, x \mapsto \frac{1}{2} + \frac{x}{2}}, \\
        \Psi &= \haf[\Big]{x \mapsto \frac{x}{8}, x \mapsto \frac{1}{5} + \frac{x}{8}, x \mapsto \frac{2}{5} + \frac{x}{8}, x \mapsto \frac{3}{5} + \frac{x}{8}, x \mapsto \frac{4}{5} + \frac{x}{64}}
    \end{align*}
    defined on $\R$, and let $X$ and $Y$ be their respective attractors. We can check that $X, Y \subseteq [0, 1]$ and both $\Phi$ and $\Psi$ are dust-like, furthermore, $\dimH X = \dimH Y = \log_2 \varphi$, where $\varphi=\zj[\big]{\sqrt5 + 1}/2$ is the golden ratio. It is clear that the measure roots are $p_\Phi=\varphi^{-1}$ and $p_{\Psi}=\varphi^{-3}$. Notice that
    \[\varphi^{-3}=\sqrt5 - 2 \in \Z[\sqrt5],\]
    but $\Z[\sqrt5]$ does not contain $\varphi^{-1}=\zj[\big]{\sqrt5 - 1}/2$. Therefore, $p_\Phi$ and $p_\Psi$ generate different rings, so $X$ and $Y$ are not bi-Lipschitz equivalent by \cref{XiXiong}. However, the ratios are all integer powers of 2, so there exist Lipschitz surjections in both directions by \cref{equiv_ifs}.
\end{Example}

Rather than considering self-similar sets, we will work with the more general notion of \emph{graph-directed} fractals introduced by Mauldin and Williams \cite{MauldinWilliams}.

\begin{Def}
    A \emph{graph-directed iterated function system} (GD-IFS) is a triple $\Phi=(V, E, S)$ consisting of a finite directed graph $(V, E)$ together with a contracting similarity $S_e : \R^d \to \R^d$ for each edge $e\in E$. The compact sets $\{X_i\}_{i\in V}$ are called the \emph{attractors} if for every $i \in V$,
    \begin{equation}
        X_i = \bigcup_{j\in V} \bigcup_{e \in E_{i, j}} S_e(X_j) \label{eqn:gd_decomp},
    \end{equation}
    where $E_{i,j}$ is the set of edges from $i$ to $j$.
\end{Def}

Analogously with self-similar sets, there exists a unique attractor vector $(X_i)_{i\in V}$ for every GD-IFS. An important special case is when the graph consists of a single vertex and several loop edges. It is easy to check that in this case, the attractor corresponding to the single vertex is the self-similar set generated by the similarities on the loop edges.

Throughout the paper, we will make some assumptions about GD-IFSs. First, we will assume that $\Phi$ is \emph{dust-like}, that is, for every $i \in V$, the sets $S_e(X_j)$ in the attractor property \cref{eqn:gd_decomp} are pairwise disjoint. Second, we will require that the graph $(V, E)$ is strongly connected, that is, there exists a directed path between any two vertices. Under these assumptions, the attractors of $\Phi$ all have the same dimension $s$, furthermore, their $s$-dimensional Hausdorff measure is positive and finite \cite[Theorem 3]{MauldinWilliams}. In particular, this implies that every (relatively) open subset of an attractor has positive Hausdorff measure. Without the strong connectivity assumption, the situation is more complicated, see \cite[Theorem 4]{MauldinWilliams}.

Let $\Phi=(V, E, S)$ be a GD-IFS. For an edge $e \in E$, we will denote by $r_e$ the similarity ratio of $S_e$. For a cycle $C$ in the graph $(V, E)$ (i.e. a cyclic walk without repeated vertices), we can define the \emph{cycle ratio} $r_C = \prod_{e\in C} r_e$. We will denote by $R(\Phi)$ the set of cycle ratios of $\Phi$. In the special case of an IFS, it is clear that $R(\Phi)$ is the set of ratios, matching our earlier definition.

The graph-directed generalization of \cref{equiv_ifs} is the following:
\begin{Thm} \label{equiv}
    Let $X$ and $Y$ be attractors of the dust-like strongly connected GD-IFSs $\Phi$ and $\Psi$, and assume that $\dimH X = \dimH Y = s > 0$. Furthermore, assume that either $R(\Phi)$ or $R(\Psi)$ is commensurable. Then the following are equivalent:
    \begin{enumerate}
        \item \label{equiv.surj} There exists a Lipschitz surjection $X \to Y$.
        \item \label{equiv.commensurable} The set $R(\Phi) \cup R(\Psi)$ is commensurable.
    \end{enumerate}
\end{Thm}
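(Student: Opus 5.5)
We prove the two implications separately. For \ref{equiv.commensurable}$\Rightarrow$\ref{equiv.surj} the plan is a symbolic coding argument. Since $R(\Phi)\cup R(\Psi)$ is commensurable, there is a common base $r\in(0,1)$ with every cycle ratio an integer power of $r$. By strong connectivity and passing to a higher-block presentation (subdividing edges), we may assume every edge ratio of both systems is an integer power of $r$; in general only cycle ratios are, so we first fix a vertex potential $h$ with $r_e = r^{k_e}\,\rho_{i}/\rho_j$ and absorb the factors $\rho_i$ into bi-Lipschitz rescalings of the attractors. Then $X$ and $Y$ are bi-Lipschitz equivalent to subsets of symbolic spaces in which every cylinder has diameter comparable to $r^{n}$ for an integer level $n$. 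Next compare cylinder counts: the number $N_X(n)$ of level-$n$ cylinders of $X$ and $N_Y(n)$ of $Y$ both satisfy $N(n)\asymp r^{-ns}$ by the Perron--Frobenius theory (Mauldin--Williams) applied to the matrices $A_{ij}(z)=\sum_{e\in E_{ij}} z^{k_e}$, whose spectral radius equals $1$ at $z=r^s$.

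Using these counts, build the surjection level by level. Choose a constant $L$ so that every level-$n$ cylinder of $X$ contains at least as many level-$(n+L)$ subcylinders as any level-$n$ cylinder of $Y$ needs, with uniform multiplicity bounds. Then inductively assign to each cylinder of $X$ a cylinder of $Y$ shifted by a bounded number of levels, surjectively at every stage and compatibly with the tree structure. Since the multiplicities are only polynomially comparable in finitely many states, one should use the type-based (vertex-labelled) tree and a finite-state matching: the key lemma is that for some $L$ the level-$L$ descendant counts of $X$-types dominate those of $Y$-types uniformly. The resulting limit map sends cylinders of level $n$ into cylinders of level $n-c$, hence is Lipschitz, and it is surjective by compactness. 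Dust-likeness ensures distances in $X$ between distinct level-$n$ cylinders are $\gtrsim r^n$, which gives the Lipschitz estimate.

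For \ref{equiv.surj}$\Rightarrow$\ref{equiv.commensurable}, assume WLOG $R(\Phi)$ is commensurable (the other case is symmetric in spirit). Let $f:X\to Y$ be Lipschitz and onto. Since $\mathcal H^s(Y)>0$ is finite and $f$ cannot shrink measure too much globally, a density/Lebesgue-point argument gives a cylinder $X_u$ on which $f$ is ``measure-tight'': $\mathcal H^s(f(A))\ge c\,\mathcal H^s(A)$ for subcylinders. Rescale via the self-similarity of $X$ (blow-ups $S_u^{-1}$) and of $Y$, and pass to a limit by Arzel\`a--Ascoli. This yields a Lipschitz map $g$ from an attractor of $\Phi$ to a piece of $Y$ which scales exactly: $g\circ S_{C}$ corresponds to $T_{D}\circ g$ for cycles $C$ of $\Phi$ and cycle-words $D$ of $\Psi$ with $r_D$ comparable to $r_C^{\,m}$. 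Iterating along any cycle $D'$ of $\Psi$, the set of achievable scale ratios $\log r_{D}/\log r$ must lie in a discrete set. If some cycle ratio of $\Psi$ were incommensurable with $r$, then powers of the cycles would produce scales $r_{D'}^{k}$ that are dense modulo $r^{\Z}$. This contradicts the rigidity of the tight blow-up, where $\mathcal H^s$-measure of images of cylinders must take values in a set determined by $r^{\Z}$ up to bounded constants, combined with a measure-counting argument.

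The main obstacle is this second direction: extracting exact scale-rigidity from a mere Lipschitz surjection. I would first try to establish the ``measure-tight cylinder'' lemma carefully, and then the blow-up limit preserving surjectivity onto a cylinder of $Y$.
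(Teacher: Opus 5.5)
Your argument has genuine gaps in both directions.

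\textbf{Construction, \ref{equiv.commensurable}$\Rightarrow$\ref{equiv.surj}.} At equal dimension there is no slack, so counting cannot replace exact mass matching. Let $B$ be the set of $X$-cells sent into a $Y$-cell $C$. The ratio $\Hs(B)/\Hs(C)$ is exactly averaged over the children of $C$ (it is the martingale $g_l$). A split that is correct only up to one cell therefore loses a fixed proportion per level, and this cannot be absorbed indefinitely. In fact, by \cref{diff_bound} and martingale convergence, any Lipschitz surjection must eventually split preimages \emph{exactly} along almost every branch.

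Your domination lemma cannot provide this. After equalizing ratios, the depth-$L$ descendant counts of $X$- and $Y$-types both grow like $r^{-Ls}$, with an asymptotically fixed ratio that a bounded level shift does not change. Worse, suppose each $X$-cell $A$ is sent onto one $Y$-cell $C$, with children onto children. Take a pair minimizing $\rho=\Hs(A)/\Hs(C)$; only finitely many values occur. Every child $C'$ is the image of some child $A'$ with $\Hs(A')\ge\rho\,\Hs(C')$, so $\Hs(A)=\sum_{A'}\Hs(A')\ge\rho\sum_{C'}\Hs(C')=\Hs(A)$. Hence each $C'$ is hit by exactly one $A'$, again at minimal ratio, and iterating yields a bi-Lipschitz map between an attractor of $\Phi$ and one of $\Psi$. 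In the example of the introduction these attractors are scaled copies of $X$ and $Y$, which are not bi-Lipschitz equivalent by \cref{XiXiong}. So your lemma fails there for every $L$, even though a surjection exists.

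The missing idea is arithmetic. Reduce to a common ratio $r$ with primitive graphs (\cref{commensurable_to_homogeneous}, \cref{homogeneous_power}). With $\Hs(Y)=1$, \cref{vector_in_ring} gives $\Hs(Y_j)\in\Q[p]$ for $p=r^s$. Taking $c$ disjoint copies of $X$ clears denominators. Then the mass decomposition \cref{mass_decomp} splits each $f_m^{-1}(C)$ into cell unions of exactly the measures $c\,p^{l_{-1}}\Hs(C_i)$. It writes elements of $\Z[p]$ as $p^k\vv\mv$ with $\vv$ integral, and makes them positive by multiplying with $A^l$, using $(pA)^l\to\mv\qv$.

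\textbf{Necessity, \ref{equiv.surj}$\Rightarrow$\ref{equiv.commensurable}.} Your sketch is not yet a proof. The exact conjugacy $g\circ S_C\sim T_D\circ g$ of the blow-up limit is asserted without any mechanism. Rigidity holding only ``up to bounded constants'' gives no contradiction with $r_{D'}^k$ being dense modulo $r^{\Z}$, since a bounded multiplicative error absorbs any dense set of scales.

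What you need is exact discreteness. With $\Phi$ homogeneous of ratio $r$, $f_*\Hs(Y_P)=r^{sl_P}\vv_P\mv$ with $\vv_P\in\N^{V^X}$ ranging over a finite set while $g_l$ stays bounded (\cref{vec_finite}). So the increments of $g_l$ are either $0$ or at least $\varepsilon$ (\cref{diff_bound}), and $g_l(y)$ is eventually constant for a.e.\ $y\in f(X)$. Pigeonholing $\vv_{P_m}$ along a coding containing every $C^n$ then gives $r_C^{m_2-m_1}=r^{l_{P_{m_2}}-l_{P_{m_1}}}$.

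Moreover, the case where only $R(\Psi)$ is commensurable is not symmetric. There $f_*\Hs(Y_P)$ is a sum of measures of $X$-cells at incommensurable scales and has no discreteness at all. The paper needs all of \cref{sec:measure_linear} for this case. Your Arzel\`a--Ascoli blow-up does appear there, but its purpose is to produce a measure-minimizing surjection $\tilde f_u:\tilde X_u\to Y_u$ (\cref{multipart_surj}). This map is exactly measure-linear (\cref{measure_lin}). For each cell $A$ of $\tilde X_u$, the image $\tilde f_u(A)$ contains a $Y$-cell $B$ of comparable diameter with $\tilde f_u^{-1}(B)\subseteq A$ (\cref{measure_lin_cover} plus minimality). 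Pigeonholing along the cells $A_n$ of $C^n$ then gives $r_C^{s(n_2-n_1)}=r^{s(m_2-m_1)}$.
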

It is important to note that \cref{equiv} is not just for the sake of generalization. The graph-directed case is essential for some parts of the proof, for instance, \cref{homogeneous_power} has no analogue in the usual self-similar setting.

A related result was obtained for graph-directed fractals by Xi and Xiong:
\begin{Thm}[{\cite[Theorem 2]{XiXiongGD}}]
    Let $X$ and $Y$ be attractors of the dust-like strongly connected GD-IFSs $\Phi$ and $\Psi$, and assume that $\dimH X = \dimH Y = s > 0$. Furthermore, assume that every similarity ratio of $\Phi$ is some $r_1 \in (0, 1)$, and every similarity ratio of $\Psi$ is some $r_2 \in (0, 1)$. If $r_1^{-s}$ and $r_2^{-s}$ are both integers, then the following are equivalent:
    \begin{enumerate}
        \item There exists a bi-Lipschitz map $X \to Y$.
        \item $\log r_1 / \log r_2 \in \Q$.
    \end{enumerate}
\end{Thm}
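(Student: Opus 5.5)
The plan is to prove both directions through the symbolic structure of the attractors, using that dust-likeness makes each attractor bi-Lipschitz equivalent to its coding space. Concretely, $X_i$ is bi-Lipschitz equivalent to the space of infinite paths starting at $i$, equipped with the ultrametric $r_1^{|\text{common prefix}|}$. Write $A$ for the adjacency matrix of $\Phi$. Since every ratio equals $r_1$ and the graph is strongly connected, the Mauldin--Williams theory gives that $A$ has Perron eigenvalue $r_1^{-s}=N_1$, which is an integer. Because $A$ is an integer matrix and $N_1$ is an integer eigenvalue, we can choose a strictly positive integer eigenvector $v$ with $Av=N_1 v$. The same holds for $\Psi$ with $N_2$ and an eigenvector $w$.

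\emph{(2)$\Rightarrow$(1).}
\begin{enumerate}
\item Write $r_1^a=r_2^b=r$ and replace each GD-IFS by its $a$-th (respectively $b$-th) power, i.e.\ edges become paths of length $a$ (resp.\ $b$). This does not change the attractors and makes both systems homogeneous with the common ratio $r$ and $r^{-s}=N$.
\item Let $C_N$ denote the full $N$-ary Cantor space with metric $r^{|\text{common prefix}|}$. Build a bi-Lipschitz map from $X_i$ onto $v_i$ disjoint copies of $C_N$ recursively. The first-level pieces $S_e(X_j)$, $e\in E_{i,j}$, correspond by induction to $\sum_j A_{ij}v_j=Nv_i$ copies of $rC_N$. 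Regroup these into $v_i$ bunches of $N$; each bunch is exactly one copy of $C_N$.
\item Check that this regrouping shifts levels by at most a bounded amount, so the resulting map is bi-Lipschitz.
\item Do the same for $Y$, and then compare disjoint unions of $k$ and $l$ copies of $C_N$.
\end{enumerate}

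This last comparison is the delicate point. Splitting one copy of $C_N$ into $N$ copies of the rescaled set $rC_N$, which is bi-Lipschitz to $C_N$, shows that $k$ copies are equivalent to $k+N-1$ copies. So we need $k\equiv l \pmod{N-1}$. I would handle this by using the freedom in choosing the integer eigenvector, for example by scaling $v$, or by passing to a further power of the system to change $N$, so that the residues match. This is where I expect the main obstacle, together with making the bounded-distortion bookkeeping in step (3) precise.

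\emph{(1)$\Rightarrow$(2).} Suppose $f:X_i\to Y_j$ is bi-Lipschitz. By dust-likeness, for each $X$-cylinder $Q$ of level $n$, the image $f(Q)$ is clopen in $Y_j$. It is therefore a finite union of $Y$-cylinders whose levels lie within a bounded window of $n\log r_1/\log r_2$. Let $\mathcal{H}^s$ denote $s$-dimensional Hausdorff measure, normalised on cylinders by the Perron eigenvectors. Then the cylinder measures are $r_1^{ns}v_k$ on the $X$-side and $r_2^{ms}w_l$ on the $Y$-side. The Radon--Nikodym derivative of $f_*\mathcal{H}^s$ with respect to $\mathcal{H}^s$ is bounded above and below.

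Using compactness and a blow-up argument on cylinders where this density nearly attains its supremum, I would extract cylinder pairs on which $f$ asymptotically preserves measure exactly. This yields identities of the form $N_1^{-n}\cdot(\text{integer}) = N_2^{-m}\cdot(\text{integer})$ with integers from bounded sets, for infinitely many $n$ paired with $m\approx n\log r_1/\log r_2$. By pigeonhole, two such identities have the same pair of integers. Dividing them gives $N_1^{n-n'}=N_2^{m-m'}$, i.e.\ $r_1^{s(n-n')}=r_2^{s(m-m')}$, so $\log r_1/\log r_2\in\Q$.
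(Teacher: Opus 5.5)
\textbf{The gap is in step (2) of (2)$\Rightarrow$(1), and it is more than bookkeeping.}

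Because the graph has cycles, the ``induction'' has no base case. Unwound symbolically, it does not define a map $X_i\to\bigsqcup_{v_i}C_N$ at all. A regrouping at $i$ is a bijection $\beta_i$ from pairs $(e,c)$, with $1\le c\le v_{t(e)}$, onto $\{1,\dots,v_i\}\times\{1,\dots,N\}$, read as (copy index, first digit). For a point coded $e_1e_2\dots$, the copy index $c_0$ of its image is the first coordinate of $\beta_i(e_1,c_1)$. Here $c_1$ is the copy index one level down, which depends on $c_2$, and so on without end.

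Read in the opposite direction, the recursion is well defined. It gives a level-preserving Lipschitz surjection $\bigsqcup_{v_i}C_N\to X_i$. This is the same level-by-level mass matching that the paper uses in \cref{sec:comm_to_surj}, where it yields only a surjection. The map is injective with Lipschitz inverse only if the copy-index maps $c\mapsto\pi_1\beta(e,c)$ compose to constants along all paths of some bounded length. That holds, for instance, if no piece $S_e(X_{t(e)})$ ever has its $v_{t(e)}$ copies split between bunches. Nothing in your plan ensures this, and it can fail.

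Take one vertex with three loops, so $X$ is a copy of $C_3$, and use the admissible eigenvector $v=2$. Each of the three pieces carries two copies, and both bunches have odd size $3$, so some loop $e_0$ must be split. Its copy-index map is then a permutation of $\{1,2\}$, and the fixed point of $S_{e_0}$ has two preimages. So the problem is not the level shift, which is $0$, but the splitting of pieces across bunches.

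A repair exists, and it is the real content of this direction:
\begin{enumerate}
\item Normalise $v$ to have $\gcd 1$ and make the system primitive.
\item Pass to a large power $A^k$, so that every vertex has many level-$k$ pieces of every type.
\item A Frobenius-coin argument then fills $v_i-1$ bunches of size $N^k$ with whole pieces, and the leftover pieces fill the last bunch exactly.
\end{enumerate}

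\textbf{The obstacle you singled out is not real, and your proposed remedies make things worse.} Any two finite numbers of copies of $C_N$ are bi-Lipschitz equivalent. To map $A\sqcup B$ onto $C_N$:
\begin{enumerate}
\item Send $B$ onto $[N]$ by a similarity.
\item List the cells $[1^{n-1}a]$ of $A$ along the ray $1^\infty$, for $2\le a\le N$ and $n\ge1$.
\item List $[1]\sqcup\dots\sqcup[N-1]$ as $[2],\dots,[N-1]$ followed by the cells $[1^na]$, $n\ge1$.
\item Match the two lists in order.
\end{enumerate}
This Hilbert-hotel shift changes levels by at most one and is bi-Lipschitz; inducting gives any number of copies. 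So no congruence modulo $N-1$ is needed. Scaling $v$ is exactly what creates forced splits, as in the $C_3$ example, and passing to powers leaves $v$ unchanged.

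\textbf{On (1)$\Rightarrow$(2).} The paper only quotes this theorem, so there is no proof to compare with directly. However, (1)$\Rightarrow$(2) is a special case of \cref{equiv}. Your outline matches the second argument in \cref{sec:surj_to_comm}. The point to secure is \emph{exact} measure-linearity, either from a minimising blow-up limit as in \cref{measure_lin} or from the discreteness of \cref{diff_bound}. Approximate identities $N_1^{-n}a\approx cN_2^{-m}b$ cannot distinguish rational from irrational $\log r_1/\log r_2$.
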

Note that this theorem assumes that every similarity ratio is the same, while in \cref{equiv}, we only assume the weaker property that the ratios of cycles are commensurable. However, as we will see in \cref{commensurable_to_homogeneous}, these are essentially equivalent.

The paper is structured as follows. In \cref{sec:homogeneous}, we will show that the GD-IFSs can be replaced by ones with the same ratio for every similarity, plus some additional technical assumptions. In \cref{sec:measure_linear}, we will generalize some results of Ruan and Xiao \cite{RuanXiao} on measure-linear Lipschitz surjections to the graph-directed setting. Finally, in \cref{sec:surj_to_comm}, we will show the implication $\ref{equiv.surj} \Rightarrow \ref{equiv.commensurable}$, and the implication $\ref{equiv.commensurable} \Rightarrow \ref{equiv.surj}$ will be proved in \cref{sec:comm_to_surj}.

\section{Homogeneous GD-IFSs} \label{sec:homogeneous}

\begin{Def}
    A directed graph is \emph{primitive} if it is strongly connected and the GCD of the lengths of the cycles is 1.
\end{Def}

\begin{Def}
    We call a GD-IFS $\Phi=(V, E, S)$ \emph{homogeneous} if the similarity ratio of every $S_e$ ($e \in E$) is the same. We say that $\Phi$ is \emph{primitive} if it is homogeneous and the graph $(V, E)$ is primitive.
\end{Def}

The main goal of this section is to show that an attractor of a GD-IFS with commensurable cycle ratios can be obtained as the attractor of a primitive GD-IFS.

\begin{Lemma} \label{integer_weight}
    Let $G = (V, E)$ be a strongly connected finite directed graph. Assume that a weight $w : E \to \R$ has the property that every (directed) cycle has positive integer weight. Then for every $v_0\in V$, there exists a potential $p : V \to \R$ such that for every edge $e \in E_{u,v}$, the modified weight $w'(e) = w(e) + p(u) - p(v)$ is a nonnegative integer, furthermore, $w'(e)$ is positive for any edge $e \in E_{u,v_0}$.
\end{Lemma}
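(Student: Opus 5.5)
The plan is to build the potential in two stages. First make all edge weights integers; then, with integer weights, use a shortest-path potential to make them nonnegative and strictly positive on edges entering $v_0$. Adding a potential changes the weight of a closed walk by zero, so cycle weights are preserved at each stage. The potential obtained at the end is the sum of the two.

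\emph{Stage 1 (integrality).} Since $G$ is strongly connected, fix for every $v\in V$ a walk $P_v$ from $v_0$ to $v$, with $P_{v_0}$ empty. Let $q(v)=w(P_v)$, the sum of the weights along $P_v$. Take an edge $e\in E_{u,v}$ and a walk $Q$ from $v$ back to $v_0$. Then $P_u e Q$ and $P_v Q$ are closed walks. Every closed walk decomposes into cycles, so both have integer weight. Subtracting gives $w(e)+q(u)-q(v)\in\Z$. Hence $w_1(e)=w(e)+q(u)-q(v)$ is integer-valued, and every cycle still has positive integer weight, i.e.\ weight at least $1$.

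\emph{Stage 2 (nonnegativity and positivity into $v_0$).} Define $c(e)=w_1(e)-1$ if $e$ ends at $v_0$, and $c(e)=w_1(e)$ otherwise. A cycle has no repeated vertices, so it enters $v_0$ at most once. Therefore every cycle has $c$-weight at least $w_1(C)-1\ge 0$. Since every closed walk is a union of cycles, there are no negative closed walks.

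Consequently, for each $v$ the infimum $d(v)$ of $c$-weights of walks from $v_0$ to $v$ is attained on a simple path. Indeed, every walk decomposes into a simple path plus cycles of nonnegative weight, and there are only finitely many simple paths. So $d(v)$ is a finite integer, with $d(v_0)\le 0$ from the empty walk. The triangle inequality $d(v)\le d(u)+c(e)$ holds for every $e\in E_{u,v}$, so $c(e)+d(u)-d(v)\ge 0$.

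Set $p=q+d$. Then
\[
w'(e)=w(e)+p(u)-p(v)=c(e)+d(u)-d(v)+[\,v=v_0\,],
\]
where $[\,v=v_0\,]$ is $1$ if $v=v_0$ and $0$ otherwise. This is a nonnegative integer for every edge, and it is at least $1$ for edges entering $v_0$, as required.

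The only delicate point is the Stage 2 observation that cycles pass through $v_0$ at most once. This is what lets us subtract $1$ on edges into $v_0$ without creating negative cycles, which in turn makes the shortest-path potential $d$ well defined.
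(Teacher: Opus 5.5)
Your proof is correct. It uses the same two ingredients as the paper: a shortest-path potential from $v_0$, and the comparison of the closed walks $P_ueQ$ and $P_vQ$. It arranges them differently, though.

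The paper uses a single potential: $p(v)$ is the minimum $w$-weight of a walk from $v_0$ to $v$, computed for the original real weights.
\begin{itemize}
\item Nonnegativity of $w'$ is then just the triangle inequality.
\item Integrality is proved afterwards. The paper takes $P_u$ and $P_v$ to be minimum-weight walks, along which $w'$ vanishes, so the difference of the two closed walks is exactly $w'(e)$.
\item Positivity on edges into $v_0$ comes for free. Since $p(v_0)=0$, for $e\in E_{u,v_0}$ we have $w'(e)=p(u)+w(e)$. This is the weight of the nonempty closed walk $P_ue$, hence at least $1$.
\end{itemize}

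You normalize to integer weights first, using arbitrary reference walks, and only then run shortest paths. This avoids the tightness argument along optimal walks, and it makes the distances $d(v)$ integers automatically.

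Your shift of $-1$ on edges entering $v_0$ is valid, as is the supporting observation that a cycle enters $v_0$ at most once. It is not needed, however. With the plain shortest-path potential for $w_1$ you already get $d(v_0)=0$ and $w_1(e)+d(u)-d(v_0)\ge 1$ for every $e\in E_{u,v_0}$, by exactly the paper's closed-walk argument.

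So the paper's route is more economical, while yours cleanly separates the integrality issue from the nonnegativity issue. Neither argument is more general than the other.
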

\begin{proof}
    First, we claim that the weight of a cyclic walk is a nonnegative integer. To see this, notice that a nonempty cyclic walk contains a cycle. Removing this cycle from the walk, we can proceed by induction on the number of edges.

    Let $p(v)$ be the minimum weight of a walk from $v_0$ to $v$. Clearly $p(v)$ is finite by the nonnegativity of weights of cyclic walks. It is clear from the minimality that for an edge $e \in E_{u,v}$, $p(v) \le p(u) + w(e)$, or equivalently, $w'(e) \ge 0$.

    Now let $e\in E_{u,v}$ be an arbitrary edge. There exist minimum weight walks $P_u$ and $P_v$ from $v_0$ to $u$ and $v$, respectively. It is clear that $w'$ is zero for all the edges in $P_u$ and $P_v$. Choose a walk $Q$ from $v$ to $v_0$. Since modifying the edge weights by a potential does not change the weights of cyclic walks, the cyclic walks $P_u e Q$ and $P_v Q$ both have integer weight with respect to $w'$. Taking the difference gives $w'(e)$, since the weight of $Q$ cancels out, furthermore, $P_u$ and $P_v$ both have zero weight. Thus, we have shown that $w'(e)$ is an integer.

    Finally, assume that $w'(e) = 0$ for some edge $e\in E_{u,v_0}$. Let $P_u$ be a minimum $w$-weight walk from $v_0$ to $u$. Since each edge in $P_u$ has zero $w'$-weight, the cycle $P_u e$ has zero $w'$-weight. Consequently, it has zero $w$-weight, contradicting the positivity assumption.
\end{proof}

\begin{Prop} \label{commensurable_to_homogeneous}
    Let $X$ be an attractor of a dust-like strongly connected GD-IFS $\Phi$ with commensurable cycle ratios. Then there exists a primitive dust-like strongly connected GD-IFS $\Phi'$ that also has $X$ as an attractor. Furthermore, the common similarity ratio $r$ of $\Phi'$ can be chosen such that every element of $R(\Phi)$ is of the form $r^k$ for some positive integer $k$.
\end{Prop}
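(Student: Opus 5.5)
Since the cycle ratios are commensurable, I first fix $r_0\in(0,1)$ with every cycle ratio an integer power $r_0^{m}$, $m>0$. Take the cycle ratios $r_C=r_0^{m_C}$ for finitely many cycles, and let $r_1=r_0^{1/g}$ with $g$ large enough to leave room for a refinement below; I will pin down the final $r$ later. Apply \cref{integer_weight} with weight $w(e)=\log r_e/\log r_1$. Every cycle then has positive integer weight $m_C g$ (positive because $r_C<1$). This gives a potential $p$ such that $w'(e)=w(e)+p(u)-p(v)$ is a nonnegative integer, and $w'(e)>0$ on edges entering $v_0$.

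Next I rescale. For each vertex $v$ define $\tilde X_v = T_v(X_v)$, where $T_v$ is a similarity of ratio $r_1^{-p(v)}$, with $T_{v_0}=\mathrm{id}$ after normalizing $p(v_0)=0$. The conjugated maps $T_u S_e T_v^{-1}$ have ratio $r_1^{w'(e)}$, so the new GD-IFS on the same graph has attractors $\tilde X_v$, and $\tilde X_{v_0}=X_{v_0}$. Dust-likeness and strong connectivity are preserved, and cycle ratios are unchanged. I take $X=X_{v_0}$, which costs nothing by relabeling.

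The remaining task is to make every ratio equal to $r_1$. Edges with $w'(e)=k\ge 2$ I subdivide into a path of $k$ edges through $k-1$ new vertices. Each new vertex $x$ gets the attractor $\tilde X_x$, a scaled copy of $\tilde X_v$, with each edge map of ratio $r_1$; the compositions reproduce $S_e$, and dust-likeness holds trivially at vertices of out-degree one. Edges with $w'(e)=0$ are the main obstacle. I will handle them by iterating the decomposition \cref{eqn:gd_decomp}: replace each zero-weight edge $e\in E_{u,v}$ by all the length-two compositions $e f$ with $f$ leaving $v$, and repeat. Any walk of zero $w'$-weight of length at least $|V|$ would contain a cycle, which has positive weight, so no infinite zero-weight walks exist and the process terminates. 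Each replacement preserves disjointness, since $S_e(X_v)$ is split into the disjoint pieces $S_eS_f(X_{v'})$. Zero-weight edges never enter $v_0$, but pruning vertices could break strong connectivity. To handle this, I will first do the expansion as a whole: for each vertex $u$ I take the decomposition of $X_u$ into pieces indexed by minimal walks of positive $w'$-weight. This is a finite maximal antichain, so it is a dust-like decomposition. I then keep only the vertices reachable from $v_0$ in the new graph. These still satisfy the attractor equations, and strong connectivity follows because every original vertex reaches $v_0$ along some walk whose last edge has positive weight.

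Finally I arrange primitivity. The cycle-length gcd $d$ of the homogeneous graph can be made $1$ by replacing $r_1$ with $r=r_1^{d}$ and passing to the $d$-th power graph on one periodic class containing $v_0$, which is primitive with ratio $r$. Then every cycle ratio is $r_1^{\text{length}}$. Since all cycle lengths are multiples of $d$, each element of $R(\Phi)$ is a positive integer power of $r$. Alternatively, I would choose $r_1$ so that the gcd of the $w$-weights is already accounted for.
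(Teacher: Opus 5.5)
Your argument is correct. It shares the paper's first and last steps: rescaling via the potential from \cref{integer_weight}, and subdividing edges of weight $k\ge 2$. It differs in the two middle steps. One sign needs fixing: $T_v$ must have ratio $r_1^{p(v)}$, not $r_1^{-p(v)}$. Otherwise $T_uS_eT_v^{-1}$ has ratio $r_1^{w(e)-p(u)+p(v)}$ rather than $r_1^{w'(e)}$.

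\emph{Removing ratio-one edges.} The paper does this by local elimination steps: $e\in E_{u,v}$ is replaced by the composites $ef$, and $v$ is deleted when it loses all incoming edges. It then needs bookkeeping, namely that the set of vertices with incoming isometric edges never grows, to get termination and to see that $v_0$ is never deleted. You instead replace the out-edges of every vertex at once by the finite maximal antichain of minimal positive-weight walks, then prune to the vertices reachable from $v_0$. Termination is immediate, since a zero-weight walk contains no cycle. Strong connectivity follows from the positivity of the edges entering $v_0$. This is cleaner than the paper's version.

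\emph{Primitivity.} The paper normalizes $r$ at the start so that the exponents of the cycle ratios have gcd $1$, which makes the homogenized graph primitive automatically. You pass instead to the $d$-th power graph on the cyclic class of $v_0$. This costs you the extra Perron--Frobenius fact that $A^d$ restricted to a cyclic class is primitive. Your parameter $g$ plays no role; $g=1$ works.

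\emph{A fact both routes use without stating it.} Every cycle $C$ of the original graph must survive as a closed walk of weight $w(C)$ in the final graph. The paper needs this to keep the gcd equal to $1$. You need it for $d\mid w(C)$, which is exactly the last clause of the statement. Your sentence ``every cycle ratio is $r_1^{\text{length}}$'' skips over it. The fix is short. Pick an edge $e\in E_{b,a}$ of $C$ with $w'(e)>0$. Starting $C$ at $a$ and cutting after each positive-weight edge writes $C$ as a closed walk of the expanded graph through $a$. The vertex $a$ survives pruning, because any walk from $v_0$ to $b$ followed by $e$ cuts up the same way into edges of the expanded graph.
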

\begin{proof}
    Since there are finitely many cycles, there exists an $r\in (0, 1)$ such that the elements of $R(\Phi)$ are all of the form $r^k$ for some positive integer $k$. We may assume that the GCD of these exponents is 1, otherwise, we can replace $r$ by some power.

    Define the weight $w(e) = \log_r r_e$. It is clear from the choice of $r$ that every cycle has integer weight. Let $v_0\in V$ be the vertex for which $X=X_{v_0}$. By \cref{integer_weight}, we can choose a potential $p$ such that for every $e \in E_{u,v}$, the weight $w'(e) = w(e) + p(u) - p(v)$ is a nonnegative integer and $w'(e) > 0$ for every $e\in E_{u,v_0}$. Without loss of generality, we may assume that $p(v_0) = 0$. For every $e \in E_{u,v}$, define the similarity
    \[S'_e(x) = r^{p(u)} S_e\zj[\big]{r^{-p(v)} x}.\]
    We can see that the similarity ratio of $S'_e$ is $r^{w'(e)} \le 1$. Let $X'_u = r^{p(u)} X_u$. Clearly $X'_{v_0} = X_{v_0} = X$. We can check that
    \begin{align*}
        X'_u &= r^{p(u)} \bigcup_{v\in V} \bigcup_{e \in E_{u,v}} S_e(X_v) =
        \bigcup_{v\in V} \bigcup_{e \in E_{u,v}} r^{p(u)} S_e(X_v) =\\&=
        \bigcup_{v\in V} \bigcup_{e \in E_{u,v}} S'_e\zj[\big]{r^{p(v)} X_v} =
        \bigcup_{v\in V} \bigcup_{e \in E_{u,v}} S'_e(X'_v).
    \end{align*}
    This shows that the sets $X'_u$ are the attractors of the GD-IFS $\Phi_1 = (V, E, S')$. Note that the ratios along cycles are the same for $S$ and $S'$. Compared to the earlier definition, we must allow $S'$ to have isometries, since the similarity ratio can be 1. Our next goal is to eliminate these, which we will do in a sequence of steps.

    Let $e\in E_{u,v}$ be an edge. We can eliminate $e$ from the graph as follows: for each edge $f\in E_{v,w}$, insert a new edge $f^*$ into $E_{u,w}$, and set $S'_{f^*} := S'_e \circ S'_f$. Next, delete the edge $e$. After this, it is possible that $v$ has no incoming edges, in this case, we will also delete $v$ and all outgoing edges from the graph. We will denote the modified set of edges by $E^*$.

    We will now check that the graph remains strongly connected. For any walk $P$ that does not end in $v$, we can replace each occurrence of $e f$ with the corresponding $f^*$ added when eliminating $e$. This shows that any vertex that is not $v$ can be reached from any vertex. If we did not delete $v$, then it can be reached from some other vertex, hence the graph remains strongly connected.

    For a cycle in the new graph, we can replace each new edge $f^*$ by $e f$. This way, we get a cyclic walk in the original graph that has the same similarity ratio. This shows that the ratios of cycles are still less than 1. It is also clear that the ratio of each edge is still of the form $r^k$ with $k \ge 0$. Furthermore, the GCD of the exponents of $r$ in the cycle ratios will still be 1.

    Next, we will see that the attractors do not change after an elimination step. Since the outgoing edges change only at $u$, we only have to check the attractor property for $X'_u$. Notice that
    \begin{align*}
        S'_e(X'_v) &= S'_e\zj[\Bigg]{\bigcup_{w\in V} \bigcup_{f\in E_{v,w}} S'_f(X'_w)} =
        \bigcup_{w\in V} \bigcup_{f\in E_{v,w}} S'_e\zj[\big]{S'_f(X'_w)} =\\&=
        \bigcup_{w\in V} \bigcup_{f\in E_{v,w}} S'_{f^*}(X'_w).
    \end{align*}
    This implies that the attractor property for $X'_u$ remains true when moving from $E$ to $E^*$. Furthermore, since the sets in the union above are disjoint by the dust-like property at $X'_v$, the dust-like property will still hold for $X'_u$.

    Let us call an edge \emph{isometric} if $S'_e$ is an isometry. We will now perform a sequence of elimination steps to remove the isometric edges. We take a vertex $v$ that has incoming isometric edges, and eliminate these edges in any order. Now suppose that we added an isometric edge $f^* \in E_{u,w}$ in this process. This is possible only if the original edge $f$ was also isometric. It follows that $w$ already had an incoming isometric edge. Therefore, the set of vertices with incoming isometric edges does not increase. After these steps, we have removed all the incoming isometric edges at $v$. We still have to check that we did not add any new isometric edge $f^* \in E^*_{u,v}$. This is only possible if $f \in E_{v,v}$ and $f$ is also isometric. However, this implies that the cycle consisting of just $f$ has similarity ratio $1$, but this is a contradiction, since the invariant that every cycle has ratio less than 1 holds during the whole process.

    We have shown that after performing these elimination steps at a vertex $v$, the set of vertices with incoming isometric edges does not increase, furthermore, $v$ will be removed from this set. It follows that after doing these elimination steps for all vertices, there will be no isometric edges remaining. However, we have to be careful, since if we delete $v_0$ during the process, then $X=X'_{v_0}$ may no longer be an attractor. But this is impossible: by the choice of $p$, $v_0$ initially had no incoming isometric edges, so there was no step where it could have been possibly deleted.

    Let $\Phi_2 = (\tilde V, \tilde E, \tilde S)$ be the GD-IFS obtained after all the elimination steps. The similarities in $\tilde S$ all have ratio $r^k$ for some $k > 0$. The last step of the proof is to replace each edge with ratio $r^k$ by a path consisting of $k$ edges. Specifically, let $e\in \tilde E_{u,v}$ be an edge with ratio $r^k$. We will replace $e$ by the path
    \[u \stackrel{f_1}\to w_1 \stackrel{f_2}\to w_2 \dots w_{k - 1}\stackrel{f_k}\to v.\]
    Set $\tilde X_{w_i} := r^{k - i} X'_v$. For $2 \le i \le k$, let $\tilde S_{f_i}(x) := r x$. Finally, let $S_{f_1}(x) = \tilde S_e(r^{-(k - 1)} x)$. 
    
    Let $\Phi'$ be the GD-IFS obtained by doing this transformation for each edge of $\tilde E$. For the existing vertices $w \in \tilde V$, we can set $\tilde X_w = X'_w$. It is easy to check that the sets $\tilde X_w$ are the attractors of $\Phi'$. It is also clear that each similarity of $\Phi'$ has ratio $r$. Note that there is a bijection between the cycles of $\Phi_2$ and $\Phi'$, and the corresponding cycles have the same ratio. It follows that the GCD of the exponents of $r$ in the cycle ratios of $\Phi'$ is 1. By the homogeneity, this implies that the cycle lengths have GCD 1, thus $\Phi'$ is primitive. Finally, notice that $\tilde X_{v_0} = X'_{v_0} = X$, which completes the proof.
\end{proof}

We will now prove some properties of homogeneous GD-IFSs that will be needed later.

For a homogeneous GD-IFS $\Phi=(V, E, S)$ with attractors $X_1, \dots, X_n$, we can define the $n \times n$ matrix $A$ by $A_{i,j} = \#E_{i,j}$. We will call $A$ the \emph{adjacency matrix} of $\Phi$. We can also define the column vector $\mv \in \R^V$ by $\mv = (\Hs(X_1), \dots, \Hs(X_n))^\top$. Assuming that $\Phi$ is dust-like, it follows from the attractor property \cref{eqn:gd_decomp} that
\begin{equation*}
    \Hs(X_i) = \sum_{j\in V} \sum_{e \in E_{i, j}} \Hs\zj[\big]{S_e(X_j)} = \sum_{j\in V} A_{i,j} r^s \Hs(X_j),
\end{equation*}
or equivalently, $\mv = r^s A \mv$.

\begin{Def}
    A nonnegative square matrix $A$ is \emph{primitive} if there exists a positive integer $k$ such that $A^k$ has positive entries.
\end{Def}

It is well-known (see e.g. \cite[Chapter IV, Theorem 3.3]{PF2}) that a directed graph is primitive if and only if its adjacency matrix is primitive. This has the following corollary:

\begin{Prop} \label{matrix_primitive}
    A homogeneous GD-IFS $\Phi$ is primitive if and only if its adjacency matrix $A$ is primitive.
\end{Prop}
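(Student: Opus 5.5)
This follows directly from the definitions together with the graph-theoretic fact quoted just before the statement (\cite[Chapter IV, Theorem 3.3]{PF2}): a directed graph is primitive if and only if its adjacency matrix is primitive. By definition, a homogeneous GD-IFS $\Phi=(V,E,S)$ is primitive exactly when its graph $(V,E)$ is primitive. So the plan is to check that the adjacency matrix $A$ of $\Phi$, with $A_{i,j}=\#E_{i,j}$, coincides with the adjacency matrix of the underlying directed graph in the sense used in \cite{PF2}.

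The first point to address is multigraphs, since $(V,E)$ may have several parallel edges between two vertices. I would handle this through the standard identity that $(A^k)_{i,j}$ equals the number of directed walks of length $k$ from $i$ to $j$; this follows by induction on $k$ from matrix multiplication. Consequently, both primitivity of $A$ and primitivity of the graph depend only on the zero pattern of $A$, so one may pass to the simple graph with an edge $i\to j$ whenever $E_{i,j}\neq\emptyset$.

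This passage changes neither strong connectivity nor the set of cycle lengths. Loops count as cycles of length $1$ in both settings, so nothing is lost there either. The cited theorem then applies verbatim.

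If one prefers to avoid the citation, here is the direct argument I would give.

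\emph{Primitive matrix $\Rightarrow$ primitive graph.} Suppose $A^k>0$. Then there is a walk of length $k$ between any two vertices, which gives strong connectivity. Moreover $A^{k+1}=A\,A^k$ is also positive, because every row of $A$ is nonzero (a consequence of $A^k>0$). So there are closed walks of lengths $k$ and $k+1$ through the same vertex. Each closed walk decomposes into cycles, so the gcd of cycle lengths divides both $k$ and $k+1$, hence equals $1$.

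\emph{Primitive graph $\Rightarrow$ primitive matrix.} Fix a vertex $v$. The set of lengths of closed walks at $v$ is closed under addition. Using strong connectivity, every cycle length can be inserted into a closed walk at $v$, so this set has gcd $1$. By the Frobenius coin (numerical semigroup) lemma, it contains all sufficiently large integers. Connecting any $i$ to $v$ and $v$ to any $j$ by walks of bounded length then shows $A^k>0$ for all large $k$.

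The only mildly delicate step is the gcd claim for closed walks at a single vertex $v$. For each cycle $C$, one takes a closed walk $W$ at $v$ that passes through a vertex of $C$, and considers $W$ both with and without $C$ spliced in. The difference of the two lengths is $|C|$, so the gcd of closed-walk lengths at $v$ divides every cycle length and is therefore $1$.
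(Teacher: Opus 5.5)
Your proposal is correct and follows the paper's route. The paper presents this as an immediate corollary of the cited fact that a directed graph is primitive if and only if its adjacency matrix is primitive, since a homogeneous GD-IFS is primitive exactly when its graph is; your treatment of parallel edges (via $(A^k)_{i,j}$ counting walks) and your self-contained proof of the graph-theoretic fact are also correct and just supply details the paper leaves to the reference.
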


An important property of primitive matrices is the Perron--Frobenius theorem, which states that a primitive matrix has a positive eigenvalue (called the Perron root) for which the corresponding left and right eigenvectors are positive, that is, all their components are positive. Furthermore, if an eigenvector is positive, then the corresponding eigenvalue must be the Perron root. See \cite{PF1} for a reference.

In our case, assuming that $\Phi$ is a dust-like primitive GD-IFS with similarity ratio $r$ and dimension $s$, its adjacency matrix $A$ is primitive and $A \mv = r^{-s} \mv$. Since $\mv$ is positive, it follows that $r^{-s}$ is the Perron root of $A$.

For a walk $P=e_1 e_2 \dots e_n$ ending in $j\in V$, we can define the similarity $S_P=S_{e_1} \circ S_{e_2} \circ \dots \circ S_{e_n}$ and the set $X_P = S_P(X_j)$. It is clear that $X_P$ is a copy of $X_j$ scaled by $r_P = r_{e_1} \cdots r_{e_n}$. Let $E_{i, j, l}$ denote the set of walks of length $l$ from $i$ to $j$. We will call the sets of the form $X_P$ for $P\in E_{i, j, l}$ the level $l$ cells in $X_i$. It is clear that for any $i\in V$ and $l \ge 0$, we have
\[X_i = \bigcup_{j\in V} \bigcup_{P \in E_{i, j, l}} X_P,\]
furthermore, the sets $X_P$ in the union above are pairwise disjoint.

\begin{Prop} \label{homogeneous_power}
    Let $X$ be the attractor of a dust-like primitive GD-IFS with similarity ratio $r$. Then for every positive integer $k$, $X$ can be obtained as an attractor of a dust-like primitive GD-IFS with similarity ratio $r^k$.
\end{Prop}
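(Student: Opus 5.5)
The natural approach is to pass to the $k$-th power of the graph: same vertex set $V$, and one edge from $i$ to $j$ for each walk $P \in E_{i,j,k}$ of length $k$ in $(V,E)$, carrying the similarity $S_P$. Call this GD-IFS $\Phi^{(k)} = (V, E^{(k)}, S^{(k)})$. Every $S_P$ with $P$ of length $k$ is a composition of $k$ similarities of ratio $r$, so $\Phi^{(k)}$ is homogeneous with ratio $r^k$. Its adjacency matrix is $A^k$, where $A$ is the adjacency matrix of $\Phi$.

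First, we check that the attractors of $\Phi$ are also attractors of $\Phi^{(k)}$. The decomposition into level $k$ cells stated just before the proposition gives exactly
\[X_i = \bigcup_{j\in V} \bigcup_{P \in E_{i,j,k}} S_P(X_j),\]
which is the attractor property for $\Phi^{(k)}$. By uniqueness of the attractor vector, $(X_i)_{i\in V}$ is the attractor of $\Phi^{(k)}$, so in particular $X = X_{v_0}$ is one of its attractors. The same remark also gives dust-likeness, since the level $k$ cells $X_P$, $P \in E_{i,j,k}$, are pairwise disjoint; this follows by induction on $k$ from the dust-like property of $\Phi$ and injectivity of similarities.

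Next, we verify primitivity, which by \cref{matrix_primitive} amounts to showing that $A^k$ is a primitive matrix. Since $A$ is primitive, there is $m$ with $A^m > 0$ entrywise. A nonnegative matrix with a strictly positive power has every row nonzero, so $A^{m'} > 0$ for all $m' \ge m$. Hence $(A^k)^m = A^{km} > 0$, and $A^k$ is primitive. Primitivity of a matrix with a positive power also gives strong connectivity of the new graph, because every pair $i,j$ is joined by a walk of length $m$ in the $k$-th power graph.

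The only point needing care, and so the main obstacle, is the possible presence of loops, multi-edges and vertices with no outgoing walks of length $k$. Multi-edges and loops are allowed in the GD-IFS framework. Vertices without outgoing walks cannot occur, because strong connectivity forces every vertex to have outgoing walks of every length. Therefore no vertex needs to be deleted, and the proof is complete with $\Phi^{(k)}$ as the required dust-like primitive GD-IFS of ratio $r^k$.
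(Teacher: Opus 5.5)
Your proposal is correct and follows the paper's proof essentially step for step: the new GD-IFS has one edge per length-$k$ walk carrying $S_P$, the attractor and dust-like properties come from the level-$k$ cell decomposition, and primitivity follows because the new adjacency matrix $A^k$ is primitive. You also justify details the paper leaves implicit, namely that $A^{m'}>0$ for all $m'\ge m$ (so $A^k$ is primitive) and that no vertex needs to be deleted.
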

\begin{proof}
    Let $X_1, \dots, X_n$ be the attractors of $\Phi=(V, E, S)$. We can take the decomposition
    \[X_i=\bigcup_{j\in V}\bigcup_{P \in E_{i,j,k}} S_P(X_j).\]
    This shows that $X_1, \dots, X_n$ are the attractors of the GD-IFS $\Phi'$ where the edges correspond to the walks of length $k$ in $(V, E)$ and the similarities are of the form $S_P$. It is clear that $S_P$ has similarity ratio $r^k$, which implies that $\Phi'$ is homogeneous. Furthermore, we can see that the sets $S_P(X_j)$ in the decomposition above are pairwise disjoint, thus $\Phi'$ is dust-like. Finally, it is clear that if $\Phi$ has adjacency matrix $A$, then the adjacency matrix of $\Phi'$ is $A^k$, which is primitive if $A$ is primitive.
\end{proof}

\begin{Prop} \label{vector_in_ring}
    Let $\Phi$ be a dust-like primitive GD-IFS with similarity ratio $r$ and Hausdorff dimension $s$, and let $\mv$ be its measure vector. Let $p=r^s$. Assume that some component of $\mv$ is in $\Q[p]$. Then every component of $\mv$ is in $\Q[p]$.
\end{Prop}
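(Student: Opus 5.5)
We want to show that if one component of $\mv$ lies in $\Q[p]$, then all of them do. The plan is to use the eigenvector equation $\mv = r^s A\mv = pA\mv$, that is, $(A - p^{-1}I)\mv = 0$, together with the Perron--Frobenius theorem. By \cref{matrix_primitive}, $A$ is primitive with Perron root $\lambda = r^{-s} = p^{-1}$. This eigenvalue is algebraically simple, so the kernel of $A - \lambda I$ is one-dimensional and spanned by $\mv$.

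The key observation is that $A$ has integer entries and $\lambda = p^{-1} \in \Q(p)$. Note that $p$ is algebraic, being the reciprocal of an eigenvalue of an integer matrix, so $\Q[p] = \Q(p)$ is a field. Hence $A - \lambda I$ is a matrix over the field $K = \Q(p)$. Gaussian elimination over $K$ then gives a nonzero vector $\uv \in K^V$ in its kernel: the kernel over $K$ has the same dimension as over $\R$, namely $1$, because rank is invariant under field extension. Since the real kernel is spanned by $\mv$, we get $\mv = c\,\uv$ for some real scalar $c \neq 0$.

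Now suppose $m_{i_0} \in \Q[p]$ for some $i_0$. The component $m_{i_0}$ is positive, so $u_{i_0} = m_{i_0}/c \neq 0$, and therefore
\[ c = \frac{m_{i_0}}{u_{i_0}} \in K. \]
Consequently every component $m_j = c\,u_j$ lies in $K = \Q[p]$, as claimed.

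The only delicate point is justifying $\Q[p]=\Q(p)$, i.e.\ that $p$ is algebraic. This holds because $p^{-1}$ is a root of the characteristic polynomial of $A$, which has integer coefficients; hence $p$ is algebraic and $\Q[p]$ is a field. The simplicity of the Perron root is standard (see \cite{PF1}). Even without simplicity, the eigenspace over $\R$ has a basis from $K^V$, but then $\mv$ would not be pinned down by a single component. So simplicity is genuinely needed, and it follows from primitivity.
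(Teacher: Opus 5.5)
Your proof is correct and takes essentially the same route as the paper. Both arguments note that $A - p^{-1}I$ has entries in the field $\Q(p)$, so the one-dimensional Perron eigenspace contains a $\Q(p)$-rational vector; both normalize using the given component of $\mv$, and both conclude with $\Q[p]=\Q(p)$ because $p^{-1}$ is a root of the integer characteristic polynomial of $A$. The only cosmetic difference is that the paper builds the normalization $\xv_1=\mv_1$ into the linear system, whereas you take a rational kernel vector $\uv$ and rescale it.
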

\begin{proof}
    Without loss of generality, we may assume that $\mv_1 \in \Q[p]$. Consider the system of linear equations
    \[p A \xv = \xv; \quad \xv_1 = \mv_1.\]
    Note that $\xv=\mv$ is a solution of this system of equations. Since all the coefficients are in the field $\Q(p)$ and there is a solution over $\R$, it follows that there is a solution $\xv$ with components in $\Q(p)$. Note that $p^{-1}$ is the Perron root of $A$, so the associated eigenspace is one-dimensional. Combined with the equation $\xv_1=\mv_1 > 0$, this implies that $\xv=\mv$. This shows that every component of $\mv$ is in $\Q(p)$. Finally, since $A$ has integer entries, it is clear that its eigenvalue $p^{-1}$ must be algebraic. Consequently, $p$ is also algebraic, thus the ring $\Q[p]$ and the field extension $\Q(p)$ coincide.
\end{proof}

\section{Measure-linear maps} \label{sec:measure_linear}

Let $X$ and $Y$ be attractors of the dust-like strongly connected GD-IFSs $\Phi = (V^X, E^X, S^X)$ and $\Psi = (V^Y, E^Y, S^Y)$ with the same Hausdorff dimension $s > 0$. In this section, we will show that if a Lipschitz map $f : X \to Y$ with $\Hs(f(X)) > 0$ exists, then there is also some Lipschitz map that satisfies a measure-linearity property. Note that in this section, we will not assume any commensurability conditions on $\Phi$ and $\Psi$.

The proof in this section is based on \cite{RuanXiao} with some changes for the graph-directed setting. Note that we will not use \cite[Proposition 3.4]{RuanXiao}, instead, we will give a simpler proof using the martingale-based approach of Wang \cite{Wang}.

First, we define some constants. Denote by $r_{\min}^X$ and $r_{\min}^Y$ the smallest contraction ratio in $\Phi$ and $\Psi$, respectively. Set
\[\delta = \min_{\substack{u, v \in V^Y \\ e \in E^Y_{u,v}}} \frac{d(Y_e, Y_u \setminus Y_e)}{r_e}\]
and
\[\tilde L = \min_{u \in V^X} \frac{\delta}{\diam X_u}.\]
The choice of these constants is motivated by the following lemma:
\begin{Lemma} \label{preimage_decomp}
    Let $u \in V^X$, $v \in V^Y$, $r_0 \in [r_{\min}^X, 1]$, and assume that $f : r_0 X_u \to Y_v$ is an $\tilde L$-Lipschitz map. For an arbitrary walk $P$ starting from $v$, $f^{-1}(Y_P)$ can be written as a finite disjoint union of cells, each of them congruent to some $r_Q X_w$, where $r_{\min}^X r_P \le r_0 r_Q < r_P$.
\end{Lemma}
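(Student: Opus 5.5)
Fix a walk $P$ starting from $v$ and ending at some vertex $j \in V^Y$, and write $Y_P = S_P(Y_j)$. The plan is to subdivide the domain $r_0 X_u$ recursively and stop at the first cell small enough to lie inside or outside $Y_P$. The key quantity is the gap between $Y_P$ and the rest of $Y_v$.

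\textbf{Step 1: a separation bound.} We claim that
\[
d\zj[\big]{Y_P,\, Y_v \setminus Y_P} \ge \delta r_P .
\]
We prove this by induction on the length of $P$. Write $P = eP'$ with $e \in E^Y_{v,w}$. There are two kinds of points $y \in Y_v \setminus Y_P$.
\begin{itemize}
\item If $y \notin Y_e$, then $d(y, Y_P) \ge d(Y_e, Y_v \setminus Y_e) \ge \delta r_e \ge \delta r_P$, by the definition of $\delta$ and since $r_P \le r_e$.
\item If $y \in Y_e \setminus Y_P$, then $y = S_e(y')$ with $y' \in Y_w \setminus Y_{P'}$. Since $S_e$ scales distances by $r_e$, the induction hypothesis gives $d(y, Y_P) \ge r_e \delta r_{P'} = \delta r_P$.
\end{itemize}

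\textbf{Step 2: cells that are small enough lie on one side.} Let $C$ be any cell of $r_0 X_u$ that is congruent to $r_0 r_Q X_w$. Then
\[
\diam C = r_0 r_Q \diam X_w .
\]
Since $f$ is $\tilde L$-Lipschitz and $\tilde L \diam X_w \le \delta$ by the definition of $\tilde L$, we get
\[
\diam f(C) \le \tilde L r_0 r_Q \diam X_w \le \delta r_0 r_Q .
\]
Now suppose $r_0 r_Q < r_P$. Then $\diam f(C) < \delta r_P$, which is smaller than the gap from Step 1. So $f(C)$ cannot meet both $Y_P$ and $Y_v \setminus Y_P$, and therefore $C \subseteq f^{-1}(Y_P)$ or $C \cap f^{-1}(Y_P) = \emptyset$.

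\textbf{Step 3: the stopping rule.} Start from the whole set $r_0 X_u$, viewed as the cell of the empty walk $Q$. Declare a cell $r_0 X_Q$ \emph{terminal} if $r_0 r_Q < r_P$, and otherwise split it into its children $r_0 X_{Qe}$. Consider the cells that are terminal while their parent is not; these are exactly the walks $Q$ with $r_0 r_{Q^-} \ge r_P > r_0 r_Q$, where $Q^-$ is $Q$ with its last edge removed. We need two facts about them.
\begin{itemize}
\item \emph{The ratio bound.} Since $r_Q \ge r_{\min}^X r_{Q^-}$, we get $r_0 r_Q \ge r_{\min}^X r_P$.
\item \emph{Starting is valid.} If $r_0 < r_P$, then the whole domain is already terminal, with $Q$ the empty walk. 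In that case Step 2 shows $f^{-1}(Y_P)$ is either the whole domain or empty. The lower bound $r_0 \ge r_{\min}^X \ge r_{\min}^X r_P$ still holds, so the empty-walk cell is a valid answer.
\end{itemize}

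\textbf{Step 4: conclusion.} The process terminates because ratios decrease geometrically, which uses that all similarities in $\Phi$ are contractions. The resulting terminal cells form a finite disjoint partition of $r_0 X_u$, by the disjoint level decomposition of dust-like attractors. Each terminal cell is congruent to $r_0 r_Q X_w$ and satisfies $r_{\min}^X r_P \le r_0 r_Q < r_P$. By Step 2, each is either contained in $f^{-1}(Y_P)$ or disjoint from it. Hence $f^{-1}(Y_P)$ is the disjoint union of those terminal cells contained in it.

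The main care needed is the separation estimate in Step 1, in particular matching the normalisation of $\delta$ (distance divided by $r_e$) with the induction.
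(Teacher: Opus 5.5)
Your proof is correct and follows the same route as the paper: cells with $r_0 r_Q < r_P$ have image of diameter below the gap $\delta r_P$, and the minimal such walks $Q$ partition the domain and satisfy the lower bound because the last edge cannot be removed. You are more careful than the paper in three places: you prove the separation estimate $d(Y_P, Y_v \setminus Y_P) \ge \delta r_P$ by induction, where the paper only says it is easy to see; you keep track of the factor $r_0$ explicitly; and you treat the case where the empty walk is already terminal, which is exactly where the hypothesis $r_0 \ge r_{\min}^X$ is needed.
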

\begin{proof}
    Consider a walk $Q$ from $u$ to some $w$ such that $r_Q < r_P$. By the choice of $\tilde L$, we have
    \[\diam f(X_Q) \le \tilde L \diam X_Q = \tilde L \diam X_w r_Q < \delta r_P.\]
    It is easy to see from the choice of $\delta$ that $d(Y_P, Y_v \setminus Y_P) \ge \delta r_P$. It follows that either $f(X_Q) \subseteq Y_P$ or $f(X_Q) \cap Y_P = \emptyset$, or equivalently, either $X_Q \subseteq f^{-1}(Y_P)$ or $X_Q \cap f^{-1}(Y_P) = \emptyset$.
    
    If we take $Q$ to be minimal with the property $r_Q < r_P$, then the inequality $r_Q \ge r_P r_{\min}^X$ also holds, otherwise, we could remove the last edge of $Q$, contradicting the minimality. These minimal walks partition $X_u$ into finitely many cells, so $f^{-1}(Y_P)$ can be written as the disjoint union of some of them.
\end{proof}

Let $\mathcal X$ be the family of extended metric spaces $\tilde X$ which can be written as the disjoint union
\[\tilde X = \bigcup^*_{1 \le i \le p} c_i X_{v_i},\]
where $c_i \in [r_{\min}^X, 1]$ and $v_i \in V^X$. On each $c_i X_{v_i}$, we take the metric on $X_{v_i}$ scaled by $c_i$. For $i \ne j$, we take the distance between points of $c_i X_{v_i}$ and $c_j X_{v_j}$ to be infinite.

We give the following generalization of \cite[Lemma 4.1]{RuanXiao}. Note that in this proposition, we do not assume any commensurability condition.
\begin{Prop} \label{multipart_surj}
    Assume that there exists a Lipschitz map $f : X \to Y$ such that $\Hs(f(X)) > 0$. Then for each $u \in V^Y$, there exists an $\tilde X_u \in \mathcal X$ and an $\tilde L$-Lipschitz surjection $\tilde f_u : \tilde X_u \to Y_u$. Furthermore, $\tilde X_u$ can be chosen such that $\Hs(\tilde X_u)$ is minimal among all such $(\tilde X_u, \tilde f_u)$ pairs.
\end{Prop}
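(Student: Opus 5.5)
The plan has two parts: first build one surjection using a small cell on each side together with strong connectivity, then get the minimizer from finiteness of the set of possible measures.

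\emph{Existence.} Let $f : X \to Y$ be $L$-Lipschitz with $\Hs(f(X))>0$, where $X = X_{v_0}$ and $Y = Y_{w_0}$. The idea is to restrict $f$ to a small cell of $X$, rescale it so that its Lipschitz constant drops below $\tilde L$, and check that the image contains a whole cell of $Y$.

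Take a walk $Q$ in $\Phi$ with $r_Q$ tiny. Then $X$ is the union of finitely many level cells $X_Q$, and since $\Hs(f(X))>0$ some of their images have positive measure. The hard part is to show that some cell $Y_P$ is covered entirely by $f(X)$, where $r_P$ is comparable to $L r_Q$ divided by a constant. I would argue as follows.
\begin{itemize}
\item $f(X)$ is compact and has positive $\Hs$-measure inside $Y$.
\item By a density argument (the Lebesgue density theorem for the doubling measure $\Hs|_Y$), there is a cell $Y_P$, $P$ starting at $w_0$, with $\Hs(Y_P\setminus f(X)) < \varepsilon\, \Hs(Y_P)$.
\item Pull $Y_P$ back by \cref{preimage_decomp}, applied at the appropriate scale. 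Then $f^{-1}(Y_P)$ is a finite union of cells of $X$. By compactness, $f(X)\cap Y_P$ is a compact subset of $Y_P$ of almost full measure.
\end{itemize}
This still does not give full coverage, and I expect this to be the main obstacle. What I would try first is to replace ``almost full'' by ``full'' through an iteration, passing to subcells where the density tends to $1$. Alternatively, I would circumvent the issue by allowing $\tilde f_u$ to be any Lipschitz map onto $Y_u$ assembled from pieces, since the images of finitely many cells can be unioned.

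Now rescale the pieces. With $P$ fixed and $f^{-1}(Y_P)$ decomposed into cells $X_{Q_i}$, the map $S_P^{-1}\circ f\circ S_{Q_i}$ sends a copy of $X_{w_i}$ into $Y_{v}$, where $v$ is the endpoint of $P$. Its Lipschitz constant is $L r_{Q_i}/r_P$. Choosing $r_P$ at the correct scale, with $r_{Q_i}$ between $r_{\min}^X r_P/L'$ and $r_P/L'$, and rescaling the domain by a factor $c_i\in[r_{\min}^X,1]$, makes the map $\tilde L$-Lipschitz from $c_i X_{w_i}$. The disjoint union of these pieces, with infinite distance between pieces, then lies in $\mathcal X$ and surjects onto $Y_v$. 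For a general $u \in V^Y$, strong connectivity gives a walk from $v$ to $u$. Using the decomposition of $Y_v$ into level cells, take the preimage of a copy of $Y_u$ and decompose it again via \cref{preimage_decomp}, after composing with the inverse similarity. The Lipschitz constant stays $\tilde L$ because $\tilde L$-Lipschitz maps remain so after scaling domain and range by the same factor, and \cref{preimage_decomp} supplies cells with $c$ in the admissible range.

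\emph{Minimality.} Fix $u$. For any admissible pair, $\Hs(\tilde X_u)\ge \Hs(Y_u)/\tilde L^s>0$, and the possible values are $\sum_i c_i^s \Hs(X_{v_i})$. To show the infimum is attained, take a minimizing sequence. The number of pieces $p$ is bounded, because each piece has measure at least $(r_{\min}^X)^s\min_v \Hs(X_v)$. Pass to a subsequence with the same $p$ and the same $v_i$, and with $c_i\to c_i^*$. By Arzel\`a--Ascoli applied to the equi-Lipschitz maps, composed with the scalings from fixed $X_{v_i}$, the maps converge uniformly to an $\tilde L$-Lipschitz map on $\bigcup^* c_i^* X_{v_i}$. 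Surjectivity passes to the limit because of compactness and uniform convergence. Continuity of $\sum c_i^s\Hs(X_{v_i})$ then gives a minimizer.
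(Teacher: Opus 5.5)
\textbf{The gap is in the existence part, which you flag yourself but never close.} Your minimality paragraph is essentially the paper's compactness argument, and your transfer from $Y_v$ to an arbitrary $Y_u$ via \cref{preimage_decomp} is correct. The trouble is that a density argument only produces cells $Y_P$ with $\Hs(Y_P\setminus f(X))<\varepsilon\,\Hs(Y_P)$. It never gives full coverage at a finite scale. In general no cell of $Y$ lies inside $f(X)$: take $X=Y$ and let $f$ be a Lipschitz retraction of $Y$ onto a compact set of positive measure containing no cell, e.g.\ $Y$ minus a dense union of small open cells. Then no restriction of $f$ to pieces of $X$ is onto any $Y_u$. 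Passing to subcells where the density tends to $1$ does not change this. ``Assembling finitely many pieces'' gives no mechanism for covering the relatively open uncovered set, which contains cells at arbitrarily small scales.

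\textbf{The missing idea is to get the surjection itself as a compactness limit, as in your minimality step, but applied to near-surjections.} Take $\tilde L$-Lipschitz maps $\tilde f_n:\tilde X_n\to Y_u$, $\tilde X_n\in\mathcal X$, with $\Hs(\tilde f_n(\tilde X_n))\ge(1-\frac1n)\Hs(Y_u)$. Arzel\`a--Ascoli then gives a limit map with dense, hence full, image. This requires a uniform bound on $\Hs(\tilde X_n)$; otherwise the number of pieces is unbounded and there is no compactness. That bound is the real content your proposal lacks. Rescaling $f^{-1}(Y_P)$ gives a domain of measure $\Hs(f^{-1}(Y_P))/r_P^s$, which can be arbitrarily large at cells where $f$ concentrates mass. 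The paper controls this in \cref{multipart_inf_bounded}. The ratios $g_l=f_*\Hs(Y_P)/\Hs(Y_P)$ form a nonnegative martingale. By Doob's theorem one can pick $y\in f(X)$ such that $g_l(y)$ converges, the density $h_l(y)\to 1$, and the coding of $y$ visits $u$ infinitely often. Along that coding one obtains near-surjections onto $Y_u$ with bounded domain measure. A Markov-type bound using $\int g_l\,d\Hs=\Hs(X)$ could replace the martingale, but some such estimate is indispensable. The paper then runs existence and minimality together by minimising over near-surjections through the function $m_u$.
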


We will use the following generalization of \cite[Lemma 3.4]{RuanXiao}. We omit the proof, since the generalization to the graph-directed setting is straightforward. Denote by $\mathcal B_{l}$ the (finite) $\sigma$-algebra consisting of the unions of some level $l$ cells in $Y$.
\begin{Lemma}
    Let $h_l : Y \to \R$ be the $\mathcal B_l$ measurable function such that 
    \[h_l|_{Y_P}\equiv \frac{\Hs\zj[\big]{Y_P\cap f(X)}}{\Hs(Y_P)}\]
    for every $i\in V^Y$ and $P \in E^Y_{1,i,l}$. Then $h_l(y) \to 1$ for $\Hs$-a.e. $y \in f(X)$.
\end{Lemma}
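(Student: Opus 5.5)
We prove the lemma by the standard martingale convergence argument, following Wang's approach. Here $Y$ denotes the attractor $Y_1$ (the walks $P$ start at vertex $1$), and $f(X)\subseteq Y$ is compact, hence Borel.

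Let $\mu$ be the normalized measure $\Hs|_{Y}/\Hs(Y)$. This is a probability measure, since $0<\Hs(Y)<\infty$ by \cite[Theorem 3]{MauldinWilliams}. Let $g=\mathbf 1_{f(X)}$.

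First we check that $h_l$ is the conditional expectation $E_\mu[g\mid\mathcal B_l]$. The atoms of $\mathcal B_l$ are the level $l$ cells $Y_P$, $P\in E^Y_{1,i,l}$. These are finitely many and pairwise disjoint, and they cover $Y$. Each has positive measure, because $Y_P$ is a similar copy of $Y_i$ and $\Hs(Y_i)>0$. The conditional expectation of $g$ on the atom $Y_P$ is therefore the average
\[\frac{\mu\zj{Y_P\cap f(X)}}{\mu(Y_P)}=\frac{\Hs\zj{Y_P\cap f(X)}}{\Hs(Y_P)},\]
which is exactly $h_l|_{Y_P}$.

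Next we check that $(\mathcal B_l)$ is a filtration. Each level $l$ cell $Y_P$ is the disjoint union of the level $l+1$ cells $Y_{Pe}$, by the decomposition \cref{eqn:gd_decomp} applied at the endpoint of $P$. Hence $\mathcal B_l\subseteq\mathcal B_{l+1}$, and $(h_l)$ is a bounded martingale.

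By Lévy's upward theorem (the martingale convergence theorem for closed martingales), $h_l\to E_\mu[g\mid\mathcal B_\infty]$ $\mu$-a.e., where $\mathcal B_\infty=\sigma\zj{\bigcup_l\mathcal B_l}$. It remains to show that $g$ is $\mathcal B_\infty$-measurable up to null sets; then the limit equals $g$ a.e., and in particular equals $1$ $\Hs$-a.e. on $f(X)$.

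For this we use the geometry of the cells. Every point $y\in Y$ lies in a unique level $l$ cell for each $l$, and these cells have diameter at most $(r^Y_{\max})^l\max_i\diam Y_i\to0$. Consider any relatively open $U\subseteq Y$. It is the union of all cells contained in it: if $y\in U$, then the cells containing $y$ shrink to $y$ and so eventually lie in $U$. There are countably many cells in total, so $U\in\mathcal B_\infty$. Thus $\mathcal B_\infty$ contains the Borel $\sigma$-algebra of $Y$, and in particular $g$ is $\mathcal B_\infty$-measurable.

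We conclude that $h_l\to g$ $\mu$-a.e. Since $g=1$ on $f(X)$, this gives $h_l(y)\to1$ for $\Hs$-a.e.\ $y\in f(X)$.

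There is no real obstacle here. The only points that need care are that $\mathcal B_\infty$ generates the Borel sets, which follows from the shrinking diameters, and that every cell has positive measure, which is where strong connectivity is used.
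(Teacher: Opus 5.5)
Your proof is correct. The paper gives no argument for this lemma: it presents it as the graph-directed version of \cite[Lemma 3.4]{RuanXiao} and omits the proof as a straightforward generalization. So there is nothing in the paper to compare against step by step.

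Your route is a complete, self-contained proof. You identify $h_l$ with $E_\mu[\mathbf 1_{f(X)}\mid\mathcal B_l]$, check that the $\mathcal B_l$ are nested, and apply L\'evy's upward theorem. You then observe that every relatively open subset of $Y$ is a countable union of cells, so $\sigma\bigl(\bigcup_l\mathcal B_l\bigr)$ contains the Borel sets. This fits naturally with the paper's own treatment of $g_l$ via Doob's convergence theorem.

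Your argument also shows why the graph-directed generalization is harmless. The only inputs are that the level $l$ cells partition $Y$, refine as $l$ increases, have positive $\Hs$-measure, and have diameters tending to $0$. Positivity of the measure comes from Mauldin--Williams and uses strong connectivity. None of these inputs depends on there being a single vertex.

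A density-theorem proof comparing cells with balls would also work. It would use $B(y,\delta r_P)\cap Y\subseteq Y_P$ for $y\in Y_P$, together with Ahlfors regularity of $\Hs|_Y$. Your martingale argument needs neither such regularity estimates nor any covering lemma.
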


We can also define the $\mathcal B_l$ measurable function $g_l : Y \to \R$ such that
\[g_l|_{Y_P}\equiv \frac{f_*\Hs(Y_P)}{\Hs(Y_P)}\]
for every $i\in V^Y$ and $P \in E^Y_{1,i,l}$. The following proposition is an easy consequence of the dust-like property of $\Psi$:
\begin{Prop}
    The functions $g_l : Y \to \R$ are a martingale with respect to the filtration $\mathcal B_l$ and the probability measure $\Hs|_Y/\Hs(Y)$.
\end{Prop}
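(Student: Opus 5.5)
The plan is to check the martingale property straight from the definition. Write $\mu = \Hs|_Y/\Hs(Y)$ and $\nu = f_*(\Hs|_X)$, so that $\nu(B) = \Hs(f^{-1}(B))$ for Borel $B \subseteq Y$. I take $Y = Y_1$, with the level $l$ cells $Y_P$, $P \in E^Y_{1,i,l}$, as in the statement.

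First, the filtration and adaptedness.
\begin{itemize}
\item By the decomposition of $Y_1$ into level $l$ cells, these cells form a finite partition of $Y$ into compact sets, and $\mathcal B_l$ is generated by this partition.
\item Every level $l$ cell $Y_P$, with $P$ ending at $j$, splits by \cref{eqn:gd_decomp} as the disjoint union $\bigcup_{e} Y_{Pe}$ over the edges $e$ leaving $j$. So the level $l+1$ partition refines the level $l$ partition, and $\mathcal B_l \subseteq \mathcal B_{l+1}$ is a filtration.
\item Each $g_l$ is constant on the atoms of $\mathcal B_l$, hence $\mathcal B_l$-measurable.
\item $g_l$ is well defined and bounded: $\Hs(Y_P) = r_P^s \Hs(Y_j) > 0$ by \cite[Theorem 3]{MauldinWilliams}, and $\nu(Y_P) \le \Hs(X) < \infty$. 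Also $f^{-1}(Y_P)$ is closed because $f$ is continuous, so $\nu(Y_P)$ makes sense. Hence every $g_l$ is integrable.
\end{itemize}

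Second, the martingale identity $\mathbb E_\mu[g_{l+1} \mid \mathcal B_l] = g_l$. Since $\mathcal B_l$ is generated by a finite partition into sets of positive measure, the conditional expectation on an atom $Y_P$ is the $\mu$-average over $Y_P$. It therefore suffices to show $\int_{Y_P} g_{l+1}\, d\Hs = g_l|_{Y_P}\,\Hs(Y_P)$; the normalisation by $\Hs(Y)$ cancels. Using the splitting of $Y_P$ above,
\[
\int_{Y_P} g_{l+1}\, d\Hs = \sum_{e} \frac{\nu(Y_{Pe})}{\Hs(Y_{Pe})}\,\Hs(Y_{Pe}) = \sum_e \nu(Y_{Pe}) = \nu(Y_P) = g_l|_{Y_P}\,\Hs(Y_P).
\]
The third equality holds because the sets $f^{-1}(Y_{Pe})$ are pairwise disjoint Borel sets whose union is $f^{-1}(Y_P)$, so additivity of $\Hs$ on Borel sets applies.

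The only point needing care is the use of the dust-like property. The disjointness of the children $Y_{Pe}$ is used twice: for the additivity of $\nu$, and for $\mu$ not double counting overlaps in the integral. Both follow from the dust-like assumption on $\Psi$, applied at the vertex $j$ and transported by the similarity $S_P$. The remaining steps are routine.
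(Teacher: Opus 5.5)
Your proof is correct and takes the same approach as the paper. The paper gives no written proof and only calls this an easy consequence of the dust-like property of $\Psi$. Your argument is exactly that verification: the level $l+1$ cells partition each level $l$ cell, and finite additivity of $f_*\Hs$ over these children gives the martingale identity on each atom of $\mathcal B_l$.
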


Fix some $u \in V^Y$. For $t \in [0, 1]$, let
\[
    m_u(t) = \inf \ha[\big]{\Hs(\tilde X)}{\tilde X \in \mathcal X, \text{$\tilde f : \tilde X \to Y_u$ is $\tilde L$-Lipschitz}, \Hs\zj[\big]{\tilde f(\tilde X)} \ge t \Hs(Y_u)}.
\]
It is clear that $m_u$ is increasing.

\begin{Prop} \label{ae_cycle}
    For every cyclic walk $C$ in $(V^Y, E^Y)$ and $\Hs$-a.e. point $y \in Y$, the infinite walk $P$ encoding $y$ contains $C$.
\end{Prop}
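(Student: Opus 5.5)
We argue by a Borel--Cantelli type estimate on cells: the set of points whose coding walk avoids $C$ has measure zero because, at every scale, a fixed positive proportion of each cell's measure lies in subcells that enter $C$ within a bounded number of steps.

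Let $C$ be a cyclic walk starting and ending at some vertex $w\in V^Y$, and let $y\in Y=Y_1$ be encoded by the infinite walk $P=e_1e_2\dots$ starting from vertex $1$. For a finite walk $Q$ ending at a vertex $v$, let $B_Q\subseteq Y_Q$ be the set of points whose coding walk extends $Q$ but in which $C$ does not occur as a subwalk starting after $Q$. The set of points avoiding $C$ is $B_\emptyset$, and it suffices to show $\Hs(B_\emptyset)=0$.

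By strong connectivity, for each $v\in V^Y$ we fix a walk $T_v$ from $v$ to $w$, and set $N=\max_v |T_v|+|C|$. Then $Y_{Q T_v C}\subseteq Y_Q$ is a cell disjoint from $B_Q$, since every point in it contains $C$ after $Q$. Its measure satisfies
\[\Hs(Y_{QT_vC}) = r_{T_vC}^s\,\Hs(Y_w)\ge \eta\,\Hs(Y_Q),\]
where
\[\eta = \min_v \frac{r_{T_vC}^s\,\Hs(Y_w)}{\Hs(Y_v)}>0,\]
using that $\Hs(Y_Q)=r_Q^s\Hs(Y_v)$ and that all attractors have positive finite measure (by \cite[Theorem 3]{MauldinWilliams}). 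The scaling $\Hs(S_Q(A))=r_Q^s\Hs(A)$ is exact for similarities, and disjointness of the cells comes from the dust-like property.

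Next we iterate. Decompose $Y_Q$ into the disjoint cells $Y_{QR}$ over all walks $R$ of length exactly $N$ starting at $v$. At least one of these cells (one containing $Y_{QT_vC}$, with $T_vC$ padded to length $N$) has a fixed proportion of its measure outside $B_Q$. We need $B_Q\cap Y_{QR}\subseteq B_{QR}$, and this holds: a point avoiding $C$ after $Q$ also avoids it after $QR$. Therefore
\[\Hs(B_Q)\le \sum_{R}\Hs(B_{QR}) \quad\text{and}\quad \Hs(B_Q)\le \Hs(Y_Q)-\Hs(Y_{QT_vC}).\]
The clean way to combine these is to work with the ratio $\beta_k=\sup_{|Q|=kN}\Hs(B_Q)/\Hs(Y_Q)$. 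Then $\Hs(B_Q)\le \sum_{R\ne R_0}\Hs(B_{QR}) \le \beta_{k+1}(\Hs(Y_Q)-\Hs(Y_{QR_0}))\le(1-\eta)\beta_{k+1}\Hs(Y_Q)$, where $R_0$ is the cell containing $QT_vC$ (for that cell, $B_Q\cap Y_{QR_0}$ misses $Y_{QT_vC}$, which must be handled with some care). The bound goes in the wrong direction for a descending recursion, so it is simpler to iterate directly: $\Hs(B_\emptyset)\le(1-\eta)^k\Hs(Y)$ by induction on $k$, applying the one-step bound inside each level-$kN$ cell. More precisely, define $B^{(k)}$ as the set of points whose walk contains no copy of $C$ starting at the positions $\le kN$ that arise from the $T_v$ construction. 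Then $\Hs(B^{(k+1)})\le(1-\eta)\Hs(B^{(k)})$, since $B^{(k)}$ is a union of level-$kN$ cells and within each such cell a fraction of at least $\eta$ is removed. Since $B_\emptyset\subseteq\bigcap_k B^{(k)}$, letting $k\to\infty$ gives $\Hs(B_\emptyset)=0$.

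The main technical point is this bookkeeping: $B^{(k)}$ must be exactly a union of level-$kN$ cells (with possibly varying $R$-lengths, to which the bound is insensitive since $\eta$ is a uniform proportion), so that the estimate can be applied cell by cell. The same argument works for $Y_u$ with any starting vertex $u$.
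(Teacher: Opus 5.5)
Your final argument is correct and is essentially the paper's proof: inside every level-$kN$ cell $Y_Q$ you remove the subcell $Y_{QT_vC}$, which has relative measure at least $\eta$, and iterating gives $\Hs(B^{(k)})\le(1-\eta)^k\Hs(Y)$; the paper uses walks of a fixed length $k$ containing $C$ and gets the bound $(1-\varepsilon)^n$. You can delete the abandoned $\beta_k$ detour, and the displayed identity should read $\Hs(Y_{QT_vC})=r_Q^s\,r_{T_vC}^s\,\Hs(Y_w)$.
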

\begin{proof}
    Since the graph is strongly connected, there exists a $k$ such that for every $i \in V^Y$, there is a walk of length $k$ starting from $i$ that contains $C$. It follows that there is an $\varepsilon > 0$ such that the measure of the level $k$ cells of $Y_i$ encoded by a walk containing $C$ is at least $\varepsilon \Hs(Y_i)$. Now, we can consider the first $nk$ levels of $Y$ for some $n$. By considering the walks between every $k$th level, it is easy to see that the measure of the level $nk$ cells encoded by a walk not containing $C$ is at most $(1 - \varepsilon)^n$. Taking the limit $n \to \infty$ completes the proof.
\end{proof}

\begin{Lemma} \label{multipart_inf_bounded}
    If there is a Lipschitz map $f : X \to Y$ with $\Hs(f(X)) > 0$, then $m_u$ is bounded on $[0, 1)$.
\end{Lemma}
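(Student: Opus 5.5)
We fix a Lipschitz map $f : X \to Y$ with constant $L$ and $\Hs(f(X)) > 0$, where $X=X_1$ and $Y=Y_1$ after relabelling. We may assume $L \ge \tilde L$. The idea is to zoom into a single cell $Y_P \cong Y_u$ where two things hold at once: $f(X)$ has density close to $1$, and the pushforward measure $f_*\Hs$ has density bounded by a constant $M$ that does not depend on $t$. We then rescale the restriction of $f$ to $f^{-1}(Y_P)$ into an admissible pair $(\tilde X, \tilde f)$.

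\emph{Choosing the point.} The $g_l$ form a nonnegative martingale by the Proposition above, so $g_l(y)$ converges to a finite limit for $\Hs$-a.e.\ $y\in Y$. In particular $\sup_l g_l(y) < \infty$ for a.e.\ $y$, so there are a constant $M$ and a set $G\subseteq Y$ of positive measure on which $g_l \le M$ for all $l$. By the Lemma on $h_l$, we have $h_l(y) \to 1$ for a.e.\ $y \in f(X)$. Finally, apply \cref{ae_cycle} to the cyclic walks $C^k$, where $C$ is a fixed cyclic walk through $u$. This shows that for a.e.\ $y$ the encoding walk passes through $u$ at infinitely many levels $l$. Since $\Hs(f(X))>0$, we can fix one $y \in G \cap f(X)$ satisfying all three properties. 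The constant $M$ depends on this $y$ but not on $t$. Given $t<1$, we choose a level $l$ such that the level-$l$ prefix $P$ of the walk of $y$ ends at $u$ and $h_l(y) \ge t$. Then
\[\Hs\zj{Y_P\cap f(X)} \ge t\,\Hs(Y_P), \qquad \Hs\zj{f^{-1}(Y_P)} \le M\,\Hs(Y_P) = M r_P^s\,\Hs(Y_u).\]

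\emph{Decomposing the preimage.} We imitate the proof of \cref{preimage_decomp}. Consider walks $Q$ in $(V^X,E^X)$ that are minimal with $r_Q < \tilde L r_P/L$, and let $w$ be the endpoint of $Q$. For such $Q$,
\[\diam f(X_Q) \le L r_Q \diam X_w < \tilde L r_P \diam X_w \le \delta r_P \le d(Y_P, Y_1\setminus Y_P).\]
Hence each $X_Q$ lies either inside or outside $f^{-1}(Y_P)$, so $f^{-1}(Y_P)$ is a finite disjoint union of such cells. Set $\rho = L/(\tilde L r_P)$. Minimality gives $r^X_{\min} \le \rho r_Q < 1$. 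Let $\tilde X$ be the disjoint union of the rescaled copies $\rho r_Q X_w$ of these cells, so $\tilde X\in\mathcal X$. Define $\tilde f$ on the copy of $X_Q$ as $S_P^{-1}\circ f$ composed with the rescaling. Each piece is then Lipschitz with constant $L\cdot \rho^{-1}\cdot r_P^{-1} = \tilde L$ into $Y_u$, and pieces are at infinite distance from each other, so $\tilde f$ is $\tilde L$-Lipschitz. Its image is $S_P^{-1}(Y_P\cap f(X))$, which has measure at least $t\,\Hs(Y_u)$. The domain has measure
\[\Hs(\tilde X) = \rho^s\,\Hs\zj{f^{-1}(Y_P)} \le \zj{L/\tilde L}^s M\,\Hs(Y_u).\]
This bound is independent of $t$, so $m_u$ is bounded on $[0,1)$.

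\emph{Main obstacle.} The key difficulty is the measure bound on the domain. A crude count of the cells meeting $f^{-1}(Y_P)$ could blow up as $r_P \to 0$. The martingale convergence of $g_l$ is what controls this: it lets us take the same point $y$ for every $t$, with one constant $M$ valid at all levels. The other step that needs care is making sure the walk of $y$ passes through $u$ at arbitrarily deep levels, which is where \cref{ae_cycle} is used.
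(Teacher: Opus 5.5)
Your proof is correct and follows essentially the same route as the paper: choose $y\in f(X)$ with $g_l(y)$ convergent, $h_l(y)\to 1$ and infinitely many visits to $u$, then rescale $f^{-1}(Y_{P|_l})$ into an element of $\mathcal X$ whose measure is controlled by $g_l(y)$. The differences are minor. The paper first reduces to an $\tilde L$-Lipschitz $f$ by passing to a small cell $S_P(X_v)$ with $\Hs(f(S_P(X_v)))>0$ and then applies \cref{preimage_decomp} directly, whereas you keep $f$ and absorb $L/\tilde L$ into the rescaling factor $\rho$, at the harmless cost of a factor $(L/\tilde L)^s$. Also, positive measure of $G$ alone does not guarantee that $G\cap f(X)$ is non-null, so either take $M$ large or simply set $M:=\sup_l g_l(y)$ after choosing $y$.
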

\begin{proof}
    Let $L$ be the Lipschitz constant of $f$. First, notice that for any $L' > 0$, if $\Hs(f(X)) > 0$, then there is a walk $P$ ending in some $v \in V^X$ such that $r_P \le L'/L$ and $\Hs(f(S_P(X_v))) > 0$. Since $f \circ S_P$ is $L'$-Lipschitz, we may assume without loss of generality that $f$ is $\tilde L$-Lipschitz (possibly replacing $X$ by another attractor of $\Phi$).

    Note that $g_l$ is nonnegative, thus, by Doob's martingale convergence theorem, it converges for $\Hs$-a.e. point in $Y$. Consequently, there exists some $y \in f(X)$ such that $h_l(y) \to 1$ and $g_l(y)$ is convergent. Let $P$ be the infinite walk encoding $y$. By \cref{ae_cycle} we may further assume that $P$ goes through $u$ infinitely often.

    By \cref{preimage_decomp}, the set $f^{-1}(Y_{P|_l})$ is the union of cells congruent to sets of the form $r_Q X_v$, where $r_{\min}^X r_{P|_l} \le r_Q \le r_{P|_l}$. Scaling these cells by $r_{P|_l}^{-1}$, we obtain some $\tilde X_l \in \mathcal X$. Taking the restriction of $f$ onto these cells and also scaling $Y_{P|_l}$ by $r_{P|_l}^{-1}$ gives an $\tilde L$-Lipschitz map $\tilde f_l : \tilde X_l \to Y_{u_l}$, where $u_l$ is the endpoint of $P|_l$. Furthermore, we can see that $\Hs(\tilde X_l) = g_l(y) \Hs(Y_{u_l})$ and $\Hs(\tilde f_l(\tilde X_l)) = h_l(y) \Hs(Y_{u_l})$.

    It follows from the properties of $\tilde f_l$ that if $u_l=u$, then $m_u(h_l(y)) \le g_l(y) \Hs(Y_u)$. Since $u_l=u$ for infinitely many $l$ and $h_l(y) \to 1$, it follows from the monotonicity of $m_u$ that
    \[\sup_{t\in [0, 1)} m_u(t) \le \lim_{l\to \infty} g_{l}(y) \Hs(Y_u) < \infty,\]
    completing the proof.
\end{proof}

\begin{proof}[Proof of \cref{multipart_surj}]
    For $n > 0$, choose an $\tilde X_n \in \mathcal X$ and $\tilde f_n : \tilde X_n \to Y_u$ such that $\Hs(\tilde f_n(\tilde X_n)) \ge \zj[\big]{1 - \frac1n} \Hs(Y_u)$ and $\Hs(\tilde X_n) \le m_u\zj[\big]{1-\frac1n} + \frac1n$. It is clear from \cref{multipart_inf_bounded} that $\Hs(\tilde X_n)$ is bounded from above. Since $\Hs(X_v) > 0$ for every $v \in V^X$, and the scaling factors in $\mathcal X$ are bounded from below, this implies that the number of parts $p_n$ of $\tilde X_n = \bigcup_{i\le p_n}^* c_{n,i} X_{v_{n,i}}$ is bounded from above. Consequently, there is a sequence $n_k \to \infty$ such that $p_{n_k} = p$ for some constant $p$. By taking a subsequence, we may also assume that $v_{n_k,i}=v_i$ is constant and $c_{n_k, i} \to c_i$ for every $1 \le i \le p$.
    
    Let $\tilde X=\bigcup_{i \le p}^* c_i X_{v_i} \in \mathcal X$, and let $s_k : \tilde X \to \tilde X_{n_k}$ be the componentwise scaling. Note that the Lipschitz constant of $s_k$ converges to 1, so we may assume that it is always at most 2. Therefore, $\tilde f_{n_k} \circ s_k$ is $2\tilde L$-Lipschitz, so by the Arzelà--Ascoli theorem, we may further assume that $\tilde f_{n_k} \circ s_k \to \tilde f$ uniformly for some $\tilde f : \tilde X \to Y_u$. It is clear that $\tilde f$ is $\tilde L$-Lipschitz.

    Now take some nonempty relatively open subset $U \subseteq Y_u$, clearly $\Hs(U) > 0$. We know that $\Hs(\tilde f_{n_k}(\tilde X_{n_k})) \to \Hs(Y_u)$ by the choice of $\tilde f_{n_k}$, so for large enough $k$, $\tilde f_{n_k}(\tilde X_{n_k})$ intersects $U$. This is true for an arbitrary $U$, so the uniform convergence implies that the range of $\tilde f$ is dense in $Y_u$. Since $\tilde X$ is compact, it follows that $\tilde f$ is surjective.
    
    It remains to show that $\Hs(\tilde X)$ is minimal. We can see that $\Hs(\tilde X_{n_k}) \to \Hs(\tilde X)$. By the monotonicity of $m_u$, we have
    \[m_u(1) \ge m_u\zj*{1-\frac{1}{n_k}} \ge \Hs(\tilde X_{n_k}) - \frac{1}{n_k}.\]
    Taking the limit $k \to \infty$ yields the bound $m_u(1) \ge \Hs(\tilde X)$, thus $m_u(1)=\Hs(\tilde X)$. This shows that the pair $(\tilde X, \tilde f)$ minimizes $\Hs(\tilde X)$.
\end{proof}

From now on, let $\tilde f_u : \tilde X_u \to Y_u$ be an arbitrary $\tilde L$-Lipschitz surjection minimizing $\Hs(\tilde X_u)$.

\begin{Prop} \label{measure_lin}
    There exists a constant $\tilde c$ such that $(\tilde f_u)_*\zj{\Hs|_{\tilde X_u}} = \tilde c \Hs|_{Y_u}$ for every $u$.
\end{Prop}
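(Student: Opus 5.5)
Set $\mu_u = (\tilde f_u)_*\zj{\Hs|_{\tilde X_u}}$ and $M_u = m_u(1) = \Hs(\tilde X_u) = \mu_u(Y_u)$. Our candidate constant is $\tilde c = M_u/\Hs(Y_u)$. The plan has three parts. First, a cutting argument shows that every cell $Y_P$ satisfies $\mu_u(Y_P) \ge M_v r_P^s$. Second, summing over cells forces equality, so $M_u/\Hs(Y_u)$ is the same for all $u$. Third, a monotone-class argument upgrades equality on cells to equality of measures.

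\emph{Step 1 (lower bound on cells).} Let $P$ be a walk from $u$ to $v$. By \cref{preimage_decomp}, applied componentwise to $\tilde f_u$ (each component is $c_i X_{v_i}$ with $c_i\in[r^X_{\min},1]$), the set $\tilde f_u^{-1}(Y_P)$ is a finite disjoint union of cells congruent to $r_Q X_w$ with $r^X_{\min} r_P \le c_i r_Q < r_P$. Scale these cells and $Y_P$ by $r_P^{-1}$. This gives an element $\tilde X' \in \mathcal X$, since the scaling factors $c_i r_Q/r_P$ lie in $[r^X_{\min},1)$. It also gives an $\tilde L$-Lipschitz surjection $\tilde X' \to Y_v$, which is surjective because $\tilde f_u$ is onto $Y_u\supseteq Y_P$. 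By the minimality in \cref{multipart_surj}, $\Hs(\tilde X') \ge M_v$. Since $\Hs(\tilde X') = r_P^{-s}\mu_u(Y_P)$, we get
\[\mu_u(Y_P) \ge r_P^{s} M_v.\]

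\emph{Step 2 (equality on cells).} Write $\Hs(Y_P) = r_P^s \Hs(Y_v)$ and put $a_v = M_v/\Hs(Y_v)$. Step 1 reads $\mu_u(Y_P) \ge a_v \Hs(Y_P)$. Take $P$ to be a single edge $e\in E_{u,v}$ and sum over all edges out of $u$. The cells $Y_e$ partition $Y_u$, so
\[a_u \Hs(Y_u) = M_u = \sum_e \mu_u(Y_e) \ge \sum_v \sum_{e\in E_{u,v}} a_v \Hs(Y_e).\]
Choose $u$ with $a_u$ minimal. Then every term satisfies $a_v \ge a_u$, and the right-hand side is at least $a_u \sum_e \Hs(Y_e) = a_u \Hs(Y_u)$. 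Hence all inequalities are equalities, and $a_v = a_u$ for every $v$ reachable from $u$ by one edge. The set of minimizers is therefore closed under out-neighbours, so by strong connectivity $a_v = \tilde c$ for all $v$. Repeating the summation for arbitrary $u$, now with all $a$'s equal, forces $\mu_u(Y_e) = \tilde c\,\Hs(Y_e)$ for every edge $e$. Iterating over levels (or applying the same sum to walks of length $l$) gives $\mu_u(Y_P) = \tilde c\,\Hs(Y_P)$ for every cell $Y_P$.

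\emph{Step 3 (extension to all Borel sets).} The cells of $Y_u$ form a $\pi$-system that generates the Borel $\sigma$-algebra, because their diameters tend to $0$ and $Y_u$ is totally disconnected. Both measures are finite and agree on this $\pi$-system, including on $Y_u$ itself. By the uniqueness of measure extension, $\mu_u = \tilde c\,\Hs|_{Y_u}$.

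The main obstacle is Step 1. The point that needs care is that the rescaled preimage really lies in $\mathcal X$, i.e. that the scaling factors fall in $[r^X_{\min},1]$. This is exactly what the constant $\tilde L$ in \cref{preimage_decomp} is built to ensure. The second point to check is that the restricted map stays $\tilde L$-Lipschitz after rescaling both domain and target by the same factor $r_P^{-1}$, which holds because the two scalings cancel in the Lipschitz ratio. The remaining steps are a minimum principle and a standard measure-uniqueness argument.
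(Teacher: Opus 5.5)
Your proof is correct and follows essentially the same route as the paper. Both arguments use the cutting-and-rescaling lower bound $\mu_u(Y_P)\ge r_P^s\Hs(\tilde X_v)$ obtained from \cref{preimage_decomp} and minimality, then a summation over cells at a vertex minimizing $\Hs(\tilde X_u)/\Hs(Y_u)$ that strong connectivity propagates to every vertex, and finally an extension from cells to all Borel sets. The differences are cosmetic: you propagate the minimum one edge at a time rather than along walks of arbitrary length, and you spell out the $\pi$-system uniqueness step, where the relevant fact is that the countably many clopen cells form a base of $Y_u$ rather than total disconnectedness itself.
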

\begin{proof}
    Let $\tilde c = \min_u \Hs(\tilde X_u)/\Hs(Y_u)$, and assume that $\Hs(\tilde X_u)=\tilde c \Hs(Y_u)$ for some $u$. Take some $v \in V^Y$ and a walk $P \in E^Y_{u,v,l}$. Applying \cref{preimage_decomp} to $\tilde f_u^{-1}(Y_P)$ (on each component of $\tilde X_u$) and scaling by $r_P^{-1}$, we obtain an $\tilde L$-Lipschitz surjection $\hat X_P \to Y_v$ for some $\hat X_P \in \mathcal X$. We can see that
    \begin{equation}
        \Hs(\tilde f_u^{-1}(Y_P)) = r_P^s \Hs(\hat X_P) \ge r_P^s \Hs(\tilde X_v) \ge
        r_P^s \tilde c \Hs(Y_v) = \tilde c \Hs(Y_P). \label{multipart_surj_linear.bound}
    \end{equation}
    Summing \cref{multipart_surj_linear.bound} over all walks of length $l$, we have
    \begin{align*}
        \Hs(\tilde X_u) &=
        \Hs\zj*{\bigcup_{v \in V^Y} \bigcup_{P\in E^Y_{u,v,l}} \tilde f_u^{-1}(Y_P)} =\\&=
        \sum_{v \in V^Y} \sum_{P\in E^Y_{u,v,l}} \Hs\zj[\big]{\tilde f_u^{-1}(Y_P)} \ge\\&\ge
        \tilde c \sum_{v \in V^Y} \sum_{P\in E^Y_{u,v,l}} \Hs(Y_P) = \tilde c \Hs(Y_u).
    \end{align*}
    However, we have $\Hs(\tilde X_u) = \tilde c \Hs(Y_u)$ by the choice of $u$, so we have equality in \cref{multipart_surj_linear.bound} for every walk $P$ starting from $u$. In particular, for any $v \in V^Y$, if $P$ is an arbitrary walk from $u$ to $v$, then we have $r_P^s \Hs(\tilde X_v) = r_P^s \tilde c \Hs(Y_v)$, thus $\Hs(\tilde X_v) = \tilde c \Hs(Y_v)$. It follows that $u$ can be chosen to be any vertex in $V^Y$. Finally, notice that the sets of the form $Y_P$ generate the Borel $\sigma$-algebra on $Y_u$, and since $(\tilde f_u)_*\Hs(Y_P) = \Hs(\tilde f_u^{-1}(Y_P)) = \tilde c \Hs(Y_P)$, this shows that $(\tilde f_u)_*\zj{\Hs|_{\tilde X_u}} = \tilde c \Hs|_{Y_u}$.
\end{proof}

The following proposition is a generalization of \cite[Proposition 5.1]{RuanXiao}. Note that this result is stronger even for IFSs, since we do not need to assume the homogeneity of $\Psi$.
\begin{Prop} \label{measure_lin_cover}
    There exists some constant $\varepsilon > 0$ such that for every cell $A$ of $\tilde X_u$, there is a cell $B$ of $Y_u$ such that $\diam B \ge \varepsilon \diam A$ and $\tilde f_u(A) \supseteq B$.
\end{Prop}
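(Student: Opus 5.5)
We combine measure-linearity (\cref{measure_lin}) with the Lipschitz bound and a density argument, as in \cite[Proposition 5.1]{RuanXiao}. Let $A$ be a cell of $\tilde X_u$, congruent to $\rho X_w$ with $\rho$ comparable to $\diam A$, so $\Hs(A) \asymp (\diam A)^s$ with constants depending only on $\Phi$ and the bound $c_i \ge r^X_{\min}$. Since $\tilde f_u$ is $\tilde L$-Lipschitz, $\tilde f_u(A)$ has diameter at most $\tilde L \diam A$. Choose the level of the $Y$-cells so that $\tilde f_u(A)$ meets only boundedly many cells of diameter comparable to $\diam A$. By dust-likeness of $\Psi$ (the separation constant $\delta$), a set of diameter at most $\delta r_P$ lying in $Y_P$ cannot meet $Y_u\setminus Y_P$. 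So cover $\tilde f_u(A)$ by a single cell $Y_P$ with $r_P \asymp \diam A$ (take $P$ minimal with $\delta r_P \ge \tilde L\diam A$), exactly as in \cref{preimage_decomp}.

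Next, compare measures. By \cref{measure_lin}, $\Hs(\tilde f_u^{-1}(Y_P)) = \tilde c\,\Hs(Y_P) \le C (\diam A)^s$, while $\tilde f_u^{-1}(Y_P) \supseteq A$ and $\Hs(A) \ge c' (\diam A)^s$. So $A$ carries a definite proportion $\eta>0$ of the preimage of $Y_P$. We want to conclude that $\tilde f_u(A)$ contains a subcell $B \subseteq Y_P$ with $r_B \ge \varepsilon' r_P$.

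The key step, and the main obstacle, is to exclude that $\tilde f_u(A)$ is \enquote{porous} at every bounded depth inside $Y_P$. My approach is a contradiction argument using minimality. Suppose that at depth $k$ every subcell $Y_Q\subseteq Y_P$ of level $|P|+k$ is not contained in $\tilde f_u(A)$. For such $Y_Q$, measure-linearity gives
\[\Hs\bigl(\tilde f_u^{-1}(Y_Q)\bigr)=\tilde c\,\Hs(Y_Q).\]
We would like to show that removing $A$ forces a loss. I would apply the construction from \cref{measure_lin} to the rescaled map on $\tilde f_u^{-1}(Y_Q)$ after deleting $A\cap\tilde f_u^{-1}(Y_Q)$. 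If $Y_Q\not\subseteq\tilde f_u(A)$, the remaining pieces still need to cover $Y_Q\setminus\tilde f_u(A)$, which contains an open set. That alone does not give a contradiction.

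A cleaner route is compactness, as in the proof of \cref{multipart_surj}. Suppose there are cells $A_n$ with ratios $\varepsilon_n\to0$ for which no cell $B$ of relative size $\ge\varepsilon_n$ lies in $\tilde f_u(A_n)$. Rescale $A_n$ and $Y_{P_n}$ to unit size. This gives $\tilde L$-Lipschitz maps $A_n' \to Y_{v_n}$ from copies of $X_{w_n}$ scaled by factors in $[r_{\min}^X,1]$. After passing to subsequences, $w_n, v_n$ are fixed and the maps converge uniformly (Arzelà--Ascoli) to $g: cX_w\to Y_v$. Measure-linearity passes to the rescaled maps, and the pushforward of $\Hs|_{A_n'}$ is dominated by $\tilde c\,\Hs|_{Y_v}$. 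It is also bounded below in mass, so in the limit $g_*\Hs$ is absolutely continuous with positive mass. Hence $g(cX_w)$ has positive measure and, being compact, contains a density point.

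I would then upgrade this to containing a whole cell. Use \cref{preimage_decomp}-type decompositions of $g$ and the fact that for the limit, $\Hs$-a.e. point of the image has full density. Then pick a level-$m$ cell $Y_Q$ on which the image has density $>1-\theta$, and argue that the nearby $\tilde f_u$ images eventually cover $Y_Q$ up to a small set. Closing this gap is the real difficulty: density near $1$ does not give full containment. To bridge it I would use measure-linearity more strongly. Points of $Y_Q$ not in $\tilde f_u(A_n)$ have their entire preimage mass $\tilde c\,\Hs(\cdot)$ carried by other parts of $\tilde X_u$. Combined with the Lipschitz separation of \cref{preimage_decomp}, this forces those other parts to map onto whole cells, and one of them of bounded depth avoids $A_n$ entirely. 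This should contradict the density count, giving a uniform $\varepsilon$.
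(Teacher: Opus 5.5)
\textbf{There is a genuine gap, and you partly acknowledge it: the proposal stops exactly at the step that carries the proof.} Neither route you sketch can be completed as set up. In the compactness route over bad cells $A_n$, information about the limit map $g$ does not transfer back to the approximants. Uniformly Lipschitz maps converging uniformly to $g$ can each fail to contain any cell of size $\ge\varepsilon_n$ while $g$ is onto a whole cell. So even ``$g(cX_w)$ contains a cell'' would not give ``$\tilde f_u(A_n)$ contains a cell'' for large $n$. And, as you note yourself, density near $1$ cannot force containment. Your closing sentence (``this should contradict the density count'') does not name any quantity that is actually contradicted.

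\textbf{The missing idea is to aim the contradiction at the minimality of $\tilde c$, and to apply compactness to cheaper, almost-surjective maps rather than to the bad cells.} The paper's argument runs in three steps.

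(i) Apply \cref{measure_lin} to the Borel set $\tilde f_u(A)$ itself. This gives $\tilde c\,\Hs(\tilde f_u(A)) \ge \Hs(A) \ge c\,\Hs(Y_P)$, so the image has density bounded below in $Y_P$. Fix $k$. If, in each of $n$ consecutive blocks of $k$ levels, every cell had a level-$k$ subcell missed by $\tilde f_u(A)$, that density would be at most $(1-\eta_k)^n$ for some $\eta_k>0$. So for $n$ large there is a cell $B\subseteq Y_P$ of bounded relative depth in which $\tilde f_u(A)$ meets every level-$k$ subcell. Choosing $\varepsilon$ depending on $k$ and $n$, we get $\diam B\ge\varepsilon\diam A$. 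Now take $A$ violating the statement for this $\varepsilon$.

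(ii) Then $B\not\subseteq\tilde f_u(A)$. Since $\tilde f_u$ is onto, \cref{preimage_decomp} yields a cell of $\tilde f_u^{-1}(B)$ that is not contained in $A$ and whose measure is at least a fixed proportion of $\Hs(B)$. Taking $B$ deep enough inside $Y_P$, this cell is smaller than $A$, hence disjoint from $A$. Therefore $\Hs(\tilde f_u^{-1}(B)\cap A)\le\hat c\,\Hs(B)$ with $\hat c<\tilde c$ independent of $k$. This is exactly the ``loss'' you were looking for in your first attempt.

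(iii) Rescale $\tilde f_u$ restricted to $\tilde f_u^{-1}(B)\cap A$. This gives $\tilde L$-Lipschitz maps from elements of $\mathcal X$, of mass at most $\hat c\,\Hs(Y_{w_k})$, whose images meet every level-$k$ cell of $Y_{w_k}$. Let $k\to\infty$ and use Arzelà--Ascoli as in \cref{multipart_surj}. The limit meets every cell of every level, so it is a genuine surjection onto some $Y_w$ from a space of mass at most $\hat c\,\Hs(Y_w)<\tilde c\,\Hs(Y_w)$. This contradicts minimality.

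You abandoned minimality because the restricted map is not surjective. The $k$-density from step (i), together with the limit in $k$, is precisely what restores surjectivity.
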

\begin{proof}
    Fix a constant $c > 0$ such that for every $v \in V^X$ and $u \in V^Y$, the inequality
    \[\Hs(X_v) \ge \frac{c}{(r_{\min}^Y)^s} \Hs(Y_u)\]
    holds. Let $A=X_{j, Q}$ be an arbitrary cell of $c_j X_{u_j} \subseteq \tilde X_u$, and let $v \in V^X$ be the endpoint of $Q$. We can see that
    \begin{equation}
        \Hs(A) = \Hs(X_v) (c_j r_Q)^s \ge c \zj*{\frac{c_j r_Q}{r_{\min}^Y}}^s \Hs(Y_{u'})\label{measure_lin_cover.c_bound}
    \end{equation}
    for every $u' \in V^Y$.

    Suppose for contradiction that no $\varepsilon$ with the needed property exists. Let $k$ be a positive integer. Let $\rho_Y=\min\ha{\Hs(Y_u)/\Hs(Y_v)}{u,v\in V_Y}$, and choose a large integer $n$ such that $\tilde c \zj[\big]{1-\rho_Y (r_{\min}^Y)^{sk}}^{n} < c$. Let $m$ be large enough that $(r_{\max}^Y)^m \le r_{\min}^Y$, where $r_{\max}^Y$ is the largest contraction ratio of $\Psi$. Choose $\varepsilon > 0$ such that $\varepsilon \diam X_u \le (r_{\min}^Y)^{m + k n} \diam Y_v$ for every $u \in V^X$ and $v \in V^Y$. By the hypothesis, there is some $u \in V^Y$ and a cell $A=X_{j,Q}$ of $\tilde X_u$ such that $\tilde f_u(A)$ does not contain a cell of diameter at least $\varepsilon \diam A$.
    
    Clearly, we may assume that $c_j r_Q < 1$. Let $Y_P$ be a cell of $Y_{u}$ intersecting $\tilde f_u(A)$ such that $c_j r_Q < r_P \le c_j r_Q/r_{\min}^Y$, and let $v \in V^Y$ be the endpoint of $P$. Since $d(Y_P, Y_u \setminus Y_P) \ge \delta r_P > \delta c_j r_Q \ge \tilde L \diam A$, it is clear that $\tilde f_u(A) \subseteq Y_P$. Using \cref{measure_lin_cover.c_bound}, we can see that
    \begin{multline*}
        \tilde c \Hs(\tilde f_u(A)) = \Hs\zj[\big]{\tilde f_u^{-1}(\tilde f_u(A))} \ge \Hs(A)
        \ge\\\ge c \Hs(Y_{v}) \zj*{\frac{c_j r_Q}{r_{\min}^Y}}^s \ge c \Hs(Y_{v}) r_P^s = c \Hs(Y_P).
    \end{multline*}
    By the choice of $n$, it follows that
    \begin{equation} \label{measure_lin_cover.image_bound}
        \Hs(\tilde f_u(A)) > \zj[\big]{1-\rho_Y(r_{\min}^Y)^{sk}}^{n} \Hs(Y_P).
    \end{equation}
    We will now show that there is some $|P| + m \le l \le |P| + m + (n-1) k$ and a level $l$ cell $B \subseteq Y_P$ such that $\tilde f_u(A)$ intersects every level $l + k$ cell contained in $B$. Assume for contradiction that no such $B$ exists. If we take a level $|P| + m + (n-1)k$ cell $B$, then $\tilde f_u(A)$ is disjoint from at least one level $|P| + m + nk$ cell within $B$ which is disjoint from $\tilde f_u(A)$. The measure of this cell is at least $\rho_Y(r_{\min}^Y)^{sk} \Hs(B)$, thus $\Hs(\tilde f_u(A) \cap B) \le \zj[\big]{1 - \rho_Y(r_{\min}^Y)^{sk}} \Hs(B)$. We can continue this argument by induction to show that if $B$ is a level $|P| + m + (n-i) k$ cell for $1 \le i \le n$, then
    \[\Hs(\tilde f_u(A) \cap B) \le \zj[\big]{1 - \rho_Y(r_{\min}^Y)^{sk}}^i \Hs(B).\]
    Setting $i=n$, then summing over the level $|P| + m$ cells in $Y_P$, we have
    \[\Hs\zj[\big]{\tilde f_u(A)} = \Hs\zj[\big]{\tilde f_u(A) \cap Y_P} \le \zj[\big]{1 - \rho_Y(r_{\min}^Y)^{sk}}^n \Hs(Y_P),\]
    which contradicts \cref{measure_lin_cover.image_bound}.
    
    We have thus shown the existence of the cell $B=Y_{P'}$. Let $w\in V^Y$ be the endpoint of $P'$. We have
    \begin{multline*}
        \diam B = r_{P'} \diam Y_{w} \ge r_P (r_{\min}^Y)^{m + k n} \diam Y_{w} \ge\\\ge c_j r_Q (r_{\min}^Y)^{m + k n} \diam Y_{w} \ge \varepsilon \diam A.
    \end{multline*}
    It follows from the choice of $A$ that $\tilde f_u(A)$ does not cover $B$. Using \cref{preimage_decomp}, we can see that $\tilde f_u^{-1}(B)$ contains a cell $X_{j', Q'}$ not contained in $A$ such that $r_{\min}^X r_{P'} \le c_{j'} r_{Q'} < r_{P'}$.

    Since
    \[c_{j'} r_{Q'} < r_{P'} \le r_P (r_{\max}^Y)^m \le r_P r_{\min}^Y \le c_j r_Q,\]
    this is possible only if $A \cap X_{j', Q'} = \emptyset$. Consequently, if $w \in V^Y$ is the endpoint of $P'$, then by \cref{measure_lin_cover.c_bound}, we have
    \begin{align*}
        \Hs\zj[\big]{\tilde f_u^{-1}(B) \cap A} &\le
        \Hs\zj[\big]{\tilde f_u^{-1}(B)} - \Hs(X_{j', Q'}) \le\\&\le
        \tilde c \Hs(B) - \frac{c(c_{j'} r_{Q'})^s}{(r_{\min}^Y)^s} \Hs(Y_w) \le\\&\le
        \tilde c \Hs(B) - c\zj*{\frac{r_{\min}^X}{r_{\min}^Y}}^s r_{P'}^s \Hs(Y_w) =\\&=
        \tilde c \Hs(B) - c\zj*{\frac{r_{\min}^X}{r_{\min}^Y}}^s \Hs(B) = 
        \hat c \Hs(B),
    \end{align*}
    where $\hat c = \tilde c - c \zj[\big]{r_{\min}^X/r_{\min}^Y}^s < \tilde c$. We may assume that $\hat c \ge 0$, otherwise, we already have a contradiction from the previous inequality.

    If we consider the restriction of $\tilde f_u$ to $A$ and scale both $A$ and $B$ by $r_{P'}^{-1}$, then by \cref{preimage_decomp}, we obtain an $\tilde L$-Lipschitz map $\hat f_k : \hat X_k \to Y_{w_k}$, where $\hat X_k \in \mathcal X$, $\Hs(\hat X_k) \le \hat c \Hs(Y_{w_k})$ and $\hat f_k(\hat X_k)$ intersects every level $k$ cell of $Y_{w_k}$. As in the proof of \cref{multipart_surj}, we can take a subsequence $k_i$ such that $w_{k_i}=w$ is constant, the number of parts in the decomposition of $\hat X_{k_i}$ is constant, furthermore, the vertices and scaling factors in $\hat X_{k_i}$ also converge. Using the Arzelà-Ascoli theorem, we obtain some $\hat f : \hat X \to Y_{w}$ such that $\Hs(\hat X) \le \hat c \Hs(Y_w)$ and $\hat f(\hat X)$ intersects every level $k$ cell for every $k$. But this implies the surjectivity of $\hat f$, which contradicts the minimality of $\tilde c$.
\end{proof}

\section{Proof of the commensurability} \label{sec:surj_to_comm}

In this section, we will prove the implication $\ref{equiv.surj} \Rightarrow \ref{equiv.commensurable}$ of \cref{equiv}.

Let $X=X_1$ and $Y=Y_1$ be the attractors of the dust-like strongly connected GD-IFSs $\Phi = (V^X, E^X, S^X)$ and $\Psi = (V^Y, E^Y, S^Y)$, where $V^X = \{1, \dots, \#V^X\}$ and $V^Y = \{1, \dots, \#V^Y\}$. We will assume that $\dimH X = \dimH Y = s > 0$ and $\Phi$ has commensurable cycles. By \cref{commensurable_to_homogeneous}, we may assume that each $S^X_e$ has similarity ratio $r$ for some $r\in (0, 1)$ and the GCD of cycle lengths in $(V^X, E^X)$ is 1. Let $f : X \to Y$ be an $L$-Lipschitz map with the property that $\Hs(f(X)) > 0$.

Recall that the constant $\delta > 0$ was chosen such that $d(Y_P, Y \setminus Y_P) \ge \delta r_P$ for any walk $P$. Let $D = \max\ha{\diam X_i}{i \in V^X}$. For a walk $P$ in $(V^Y, E^Y)$ starting from $1$, define $l_P$ to be the least nonnegative integer such that $L D r^{l_P} < \delta r_P$.

\begin{Prop}
    For every walk $P$ in $(V^Y, E^Y)$ starting from $1$, there is a vector $\vv_P \in \N^{V^X}$ such that
    \[f_*\Hs(Y_P) = r^{s l_P} \vv_P\mv\]
\end{Prop}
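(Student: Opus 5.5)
The plan is to show that $f^{-1}(Y_P)$ is a disjoint union of level $l_P$ cells of $X$; counting those cells by their endpoint vertex then gives $\vv_P$. Here $\mv$ is the row-times-column pairing, with $\vv_P$ a row vector of counts. Recall that $X = X_1$ decomposes as the disjoint union of its level $l_P$ cells $X_Q$, $Q \in E^X_{1,j,l_P}$, and each such cell is a copy of $X_j$ scaled by $r^{l_P}$.

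First, fix a level $l_P$ cell $X_Q$ with endpoint $j$. Its diameter is $r^{l_P}\diam X_j \le D r^{l_P}$, so $\diam f(X_Q) \le L D r^{l_P} < \delta r_P$ by the definition of $l_P$. Since $d(Y_P, Y\setminus Y_P) \ge \delta r_P$, the set $f(X_Q)$ cannot meet both $Y_P$ and $Y\setminus Y_P$. Hence either $X_Q \subseteq f^{-1}(Y_P)$ or $X_Q \cap f^{-1}(Y_P) = \emptyset$. This is the same argument as in \cref{preimage_decomp}, applied at a uniform level rather than with minimal walks.

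Next, let $(\vv_P)_j$ be the number of walks $Q \in E^X_{1,j,l_P}$ with $X_Q \subseteq f^{-1}(Y_P)$. These entries are nonnegative integers, so $\vv_P \in \N^{V^X}$. By the dust-like property the level $l_P$ cells are pairwise disjoint, so additivity of $\Hs$ gives
\[f_*\Hs(Y_P) = \Hs\zj[\big]{f^{-1}(Y_P)} = \sum_{j \in V^X} (\vv_P)_j\, r^{s l_P} \Hs(X_j) = r^{s l_P}\vv_P \mv.\]
Here $f_*\Hs$ means the pushforward of $\Hs|_X$, and we use $\Hs(S_Q(X_j)) = r^{s l_P}\Hs(X_j)$ for the homogeneous ratio $r$.

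There is no real obstacle. The only points to check are that the separation constant $\delta$ indeed satisfies $d(Y_P, Y_1\setminus Y_P)\ge \delta r_P$ for walks from $1$, which is stated just before the proposition, and that $l_P$ is well defined. The latter holds because $r<1$, so some finite $l$ satisfies $LDr^{l} < \delta r_P$.
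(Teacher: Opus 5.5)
Your proof is correct and follows the paper's own argument essentially verbatim. The bound $\diam f(X_Q) \le L D r^{l_P} < \delta r_P$ forces each level $l_P$ cell $X_Q$ of $X$ either to map into $Y_P$ or to be disjoint from $f^{-1}(Y_P)$. Counting the cells with $f(X_Q) \subseteq Y_P$ according to their endpoint vertex then gives $\vv_P$, exactly as in the paper.
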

\begin{proof}
    Let $Q \in E^X_{1,i,l_P}$ for some $i$. It is clear that $\diam X_Q \le D r_Q = D r^{l_P}$, thus, we have $\diam f(X_Q) \le L D r^{l_P} < \delta r_P$. Therefore, either $f(X_Q) \subseteq Y_P$ or $f(X_Q) \cap Y_P = \emptyset$. Consequently,
    \begin{align*}
        f_*\Hs(Y_P) &=
        \sum_{i \in V^X} \sum_{Q \in E^X_{1,i,l_P}} \Hs\zj[\big]{X_Q \cap f^{-1}(Y_P)} =\\&=
        \sum_{i \in V^X} \sum_{\substack{Q \in E^X_{1,i,l_P} \\ f(X_Q) \subseteq Y_P}} \Hs(X_Q) =
        \sum_{i \in V^X} v_i r^{s l_P} \Hs(X_i),
    \end{align*}
    where $v_i = \#\ha{Q \in E^X_{1,i,l_P}}{f(X_Q) \subseteq Y_P}$.
\end{proof}

\begin{Prop} \label{vec_finite}
    For every $c > 0$, the set $\ha{\vv_P}{f_*\Hs(Y_P) \le c \Hs(Y_P)}$ is finite.
\end{Prop}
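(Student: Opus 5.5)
We bound the integer vector $\vv_P$ directly, using the identity $f_*\Hs(Y_P) = r^{s l_P}\vv_P\mv$ from the previous proposition. Every component of $\mv$ is positive, and every component of $\vv_P$ is a nonnegative integer. Hence it suffices to show that $\vv_P\mv$ is bounded above by a constant that depends only on $c$, not on $P$. Once this holds, each entry $(\vv_P)_i$ is at most that constant divided by $\min_j \Hs(X_j)$. This leaves only finitely many possible vectors in $\N^{V^X}$.

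To bound $\vv_P\mv$, we compare $r^{sl_P}$ with $\Hs(Y_P)$. By the minimality in the definition of $l_P$, either $l_P = 0$, or $LDr^{l_P-1} \ge \delta r_P$, that is,
\[
r^{l_P} \ge \frac{\delta r}{LD}\, r_P .
\]
If $l_P=0$, the set of such $P$ contributes $\vv_P\mv = f_*\Hs(Y_P) \le \Hs(X)$, which is already a uniform bound. Otherwise, raising the inequality to the power $s$ gives $r^{sl_P} \ge (\delta r/(LD))^s r_P^s$.

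We also have $\Hs(Y_P) = r_P^s \Hs(Y_j)$, where $j$ is the endpoint of $P$, so $\Hs(Y_P) \le r_P^s \max_j \Hs(Y_j)$. Combining these with the hypothesis $f_*\Hs(Y_P)\le c\Hs(Y_P)$ gives
\[
\vv_P\mv = r^{-sl_P} f_*\Hs(Y_P) \le c\, r^{-sl_P} r_P^s \max_j \Hs(Y_j) \le c\Big(\frac{LD}{\delta r}\Big)^s \max_j\Hs(Y_j),
\]
which is a constant independent of $P$.

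The argument needs no martingale or measure-linearity input. The only point requiring care is the edge case $l_P=0$, where the lower bound on $r^{l_P}$ fails; it is handled by the trivial bound $\Hs(X)$ above. We also use that $\delta$, $D$ and the positivity of the $\Hs(X_i)$ and $\Hs(Y_j)$ come from the dust-like, strongly connected setting (the result of Mauldin and Williams quoted earlier). Finiteness then follows because the vectors $\vv_P$ are bounded and have integer entries.
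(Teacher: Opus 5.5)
Your proposal is correct and follows essentially the same argument as the paper: you bound $\vv_P\mv$ uniformly using $LDr^{l_P-1}\ge\delta r_P$ and $\Hs(Y_P)=r_P^s\Hs(Y_j)$, then conclude by integrality and positivity of $\mv$. The only difference is in the case $l_P=0$. The paper disposes of it by noting it occurs for only finitely many $P$, while you use the trivial bound $\vv_P\mv\le\Hs(X)$; both work equally well.
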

\begin{proof}
    Since the maps $S^Y_e$ are contractions, it is clear that $l_P > 0$ apart from finitely many cases. By the minimality of $l_P$, we have $L D r^{l_P - 1} \ge \delta r_P$. Let $i \in V^Y$ be the last vertex of $P$. If $f_*\Hs(Y_P) \le c \Hs(Y_P)$, then
    \[\vv_P \mv = \frac{f_*\Hs(Y_P)}{r^{s l_P}} \le c \frac{\Hs(Y_P)}{r^{s l_P}} =
    c \Hs(Y_i) \zj*{\frac{r_P}{r^{l_P}}}^s \le c \Hs(Y_i) \zj*{\frac{L D}{\delta r}}^s.\]
    Note that the right-hand side does not depend on $P$. Since $i$ has finitely many values, it is bounded, so $\vv_P \mv$ is also bounded. Since $\vv_P$ has nonnegative components and $\mv$ has positive components, it is clear that each component of $\vv_P$ is bounded. Since the components of $\vv_P$ are integers, each component has finitely many possible values, thus $\vv_P$ itself has finitely many values.
\end{proof}

Recall that $\mathcal B_{l}$ is the $\sigma$-algebra generated by the level $l$ cells in $Y$. Like before, we may assume that $\Hs(Y) = 1$, which implies that
\[g_l|_{Y_P}\equiv \frac{f_*\Hs(Y_P)}{\Hs(Y_P)}\]
is a martingale.

The next step is to show that the change of $g_l$ cannot be arbitrarily small. An analogous property was used by Falconer and Marsh \cite{FalconerMarsh} in the case of bi-Lipschitz maps between dust-like self-similar sets.
\begin{Prop} \label{diff_bound}
    For every $c > 0$, there is an $\varepsilon > 0$ such that if $g_l(y) \le c$, then either $g_{l + 1}(y) = g_l(y)$ or $|g_{l + 1}(y) - g_l(y)| \ge \varepsilon$.
\end{Prop}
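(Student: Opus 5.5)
The idea is a finiteness argument. Only finitely many values of $g_l$ and $g_{l+1}$ can occur in the relevant situations, so the nonzero differences are bounded away from zero.

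Fix $y$ with $g_l(y) \le c$. Let $P$ be the level $l$ walk with $y \in Y_P$, and let $P' = Pe$ be the level $l+1$ walk with $y \in Y_{P'}$, where $e \in E^Y_{i,j}$ and $i$ is the endpoint of $P$. Then
\[g_l(y) = \frac{r^{s l_P} \vv_P \mv}{r_P^s \Hs(Y_i)}, \qquad g_{l+1}(y) = \frac{r^{s l_{P'}} \vv_{P'} \mv}{r_P^s r_e^s \Hs(Y_j)}.\]
Write $t_P = r^{l_P}/r_P$. By the definition of $l_P$ we have $L D r^{l_P} < \delta r_P \le L D r^{l_P-1}$, at least for all but finitely many $P$ (those with $l_P = 0$, which we handle separately since there are only finitely many of them). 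Hence $t_P \in (\delta r/(LD), \delta/(LD)]$.

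Next observe that $t_{P'} = r^{l_{P'}}/(r_P r_e)$, and $l_{P'}$ is the least integer with $r^{l_{P'}} < (\delta/LD) r_P r_e$. So $t_{P'}$ is determined by $t_P$ and $e$: it is $t_P r^{k}/r_e$, where $k = l_{P'} - l_P$ is determined by $t_P$ and $r_e$ (namely $k$ is the least integer with $t_P r^k < r_e\,\delta/(LD)$). Therefore
\[g_l(y) = \frac{t_P^s \vv_P\mv}{\Hs(Y_i)}, \qquad g_{l+1}(y) = \frac{t_{P'}^s \vv_{P'}\mv}{\Hs(Y_j)}.\]

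The key step, and the main obstacle, is that $t_P$ itself could take infinitely many values when the ratios of $\Psi$ are not commensurable with $r$. So I will not argue by finiteness of values directly. Instead I argue by compactness. Suppose the claim fails. Then there are sequences $(P_n, e_n)$ with $g_l \le c$ for which $0 < |g_{l+1} - g_l| \to 0$.

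First, $g_l \le c$ means $f_*\Hs(Y_P) \le c\Hs(Y_P)$. So by \cref{vec_finite}, $\vv_P$ ranges over a finite set. Similarly $g_{l+1} \le c+1$ eventually, so $\vv_{P'}$ ranges over a finite set too. Passing to a subsequence, we may fix $\vv_P = \vv$, $\vv_{P'} = \vv'$, and the edge $e$, hence also $i$ and $j$.

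Now we treat the comparison. The step $k$ takes finitely many values, since $t_P$ lies in a fixed bounded interval, so we may fix $k$ as well. After this, $g_{l+1}/g_l$ equals the constant
\[\frac{(r^k/r_e)^s\, \vv'\mv\, \Hs(Y_i)}{\vv\mv\, \Hs(Y_j)},\]
which does not depend on $n$. Meanwhile $g_l$ stays in a bounded range. Since $\vv$ is fixed and nonzero (otherwise $g_l = 0$ and the martingale step is forced to be $0$), $g_l$ is bounded below by a positive constant because $t_P$ is bounded below. Hence $|g_{l+1}-g_l| = g_l\,|\text{ratio}-1|$. This is either identically $0$ along the subsequence, or bounded below by $g_{\min}|\text{ratio}-1| > 0$. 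Either way it contradicts $0 < |g_{l+1} - g_l| \to 0$.

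Taking $\varepsilon$ as the minimum over the finitely many combinations of $(\vv, \vv', e, k)$, together with the finitely many exceptional $P$ with $l_P = 0$, gives the statement.
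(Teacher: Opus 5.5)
Your proof is correct and follows essentially the same route as the paper. The paper factors $|g_{l+1}(y)-g_l(y)|$ as $(r^{l_P}/r_P)^s$, which is bounded below by $(\delta r/(LD))^s$ outside finitely many $P$, times a second factor. That second factor takes finitely many values because $\vv_P$ and $\vv_{Pe}$ do (via \cref{vec_finite}, after assuming the jump is at most $1$), as do $l_{Pe}-l_P$ and $e$. This is your factorization $g_l(y)\,|\rho-1|$ written slightly differently. Your contradiction/subsequence framing is not needed, since taking the minimum directly over the finitely many combinations, as in your last paragraph, already gives $\varepsilon$.
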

\begin{proof}
    Let $y\in Y$. There is a walk $P \in E^Y_{1,i,l}$ and an edge $e\in E^Y_{i,j}$ such that $y \in Y_{P e}$.
    We can see that
    \begin{align*}
        \abs{g_{l + 1}(y) - g_{l}(y)} &= 
        \abs*{\frac{f_*\Hs(Y_{P e})}{\Hs(Y_{P e})} - \frac{f_*\Hs(Y_P)}{\Hs(Y_P)}} =\\&=
        \abs*{\frac{r^{s l_{P e}} \vv_{P e} \mv}{\Hs(Y_j) (r_P r_e)^s} - \frac{r^{s l_P} \vv_{P} \mv}{\Hs(Y_i) r_P^s}} =\\&=
        \zj*{\frac{r^{l_{P}}}{r_P}}^s \abs*{\frac{r^{s (l_{P e} - l_P)} \vv_{P e} \mv}{\Hs(Y_j) r_e^s} - \frac{\vv_{P} \mv}{\Hs(Y_i)}}
    \end{align*}

    Apart from finitely many cases, $l_P > 0$. The minimality of $l_P$ implies that $L D r^{l_P - 1} \ge \delta r_P$, hence
    \[\zj*{\frac{r^{l_P}}{r_P}}^s \ge \zj*{\frac{\delta r}{L D}}^s > 0.\]
    This shows that the first factor is bounded from below.
    
    We may assume that $\abs{g_{l + 1}(y) - g_{l}(y)} \le 1$, otherwise we are done.
    This implies that $g_{l + 1}(y) \le c + 1$. By \cref{vec_finite}, both $\vv_P$ and $\vv_{P e}$ have finitely many possible values. Furthermore, if $l$ is chosen so that $r^l$ is smaller than $r_e$ for every $e \in E^Y$, then clearly $l_{P e} \le l_P + l$. This shows that $0 \le l_{P e} - l_P \le l$, so $l_{P e} - l_P$ also has finitely many values. Since $i$, $j$ and $e$ all have finitely many values, this shows that the second factor has finitely many values. Therefore, it is either zero or bounded from below by a positive constant. Consequently, $\abs{g_{l + 1}(y) - g_{l}(y)}$ is either zero or bounded from below by some $\varepsilon > 0$.
\end{proof}


\begin{proof}[Proof of $\ref{equiv.surj} \Rightarrow \ref{equiv.commensurable}$ when $R(\Phi)$ is commensurable]
    Since $g_l$ is a nonnegative martingale, it converges almost everywhere. Therefore, we can choose a point $y\in f(X)$ such that $g_l(y)$ converges, furthermore, by \cref{ae_cycle}, $y$ can be chosen such that the infinite walk $P$ encoding it contains every possible cyclic walk in $(V^Y, E^Y)$.

    Since the sequence $\{g_l(y)\}_l$ is convergent, it is clearly bounded, so by \cref{diff_bound}, there is a $\varepsilon > 0$ such that $g_{l+1}(y) = g_l(y)$ or $|g_{l+1}(y) - g_l(y)| \ge \varepsilon$ for every $l$. However, the convergence implies that $|g_{l+1}(y) - g_l(y)| < \varepsilon$ for sufficiently large $l$, therefore, the sequence eventually becomes constant.

    For $l \ge 0$, let $P|_l$ denote the walk formed by the first $l$ edges of $P$. Clearly $y \in Y_{P|_l}$. By the assumption that $y \in f(X)$, the set $f^{-1}(Y_{P|_l})$ is nonempty. Furthermore, it is an open subset of $X$ (since $Y_{P|_l}$ is open in $Y$), hence $\Hs(f^{-1}(Y_{P|_l})) > 0$. It follows that $g_l(y) > 0$ for every $l$.

    Fix a cycle $C$ of length $k$ in $(V^Y, E^Y)$, and also fix some starting vertex in $C$. Denote by $C^n$ the cyclic walk obtained by repeating $C$ $n$ times. We know that $P$ contains $C^n$ for every $n$, so there is an $l_0$ such that for every $0 \le m < n$, the edges of $P$ from $l_0 + m k$ to $l_0 + (m+1) k - 1$ are a copy of $C$. Since $n$ is arbitrarily large, we may assume that $l_0$ is large enough that $g_l(y)$ is constant for $l \ge l_0$.
    
    Let $P_m = P|_{l_0 + m k}$. We know from \cref{vec_finite} that the vector $\vv_{P_m}$ has finitely many possible values. It follows that if $n$ is large enough, then there exist $0 \le m_1 < m_2 < n$ such that $\vv_{P_{m_1}} = \vv_{P_{m_2}}$. We can check that
    \begin{align*}
        g_{l_0 + m_2 k}(y) &=
        \frac{f_*\Hs(Y_{P_{m_2}})}{\Hs(Y_{P_{m_2}})} =
        \frac{r^{s l_{P_{m_2}}} \vv_{P_{m_2}}\mv}
            {r_C^{s(m_2 - m_1)}\Hs(Y_{P_{m_1}})} =\\&=
        \frac{r^{s (l_{P_{m_2}} - l_{P_{m_1}})} r^{s l_{P_{m_1}}} \vv_{P_{m_1}}\mv} {r_C^{s(m_2 - m_1)}\Hs(Y_{P_{m_1}})} =
        \zj*{\frac{r^{l_{P_{m_2}} - l_{P_{m_1}}}}{r_C^{m_2 - m_1}}}^s g_{l_0 + m_1 k}(y).
    \end{align*}
    Since $g_{l_0 + m_2 k}(y) = g_{l_0 + m_1 k}(y) > 0$, it follows that $r^{l_{P_{m_2}} - l_{P_{m_1}}} = r_C^{m_2 - m_1}$, or equivalently
    \[r_C = r^{\frac{l_{P_{m_2}} - l_{P_{m_1}}}{m_2 - m_1}}.\]
    As $C$ was arbitrary, this shows that every element of $R(\Psi)$ is a rational power of $r$, which completes the proof.
\end{proof}

Finally, we will prove the implication $\ref{equiv.surj} \Rightarrow \ref{equiv.commensurable}$ assuming the commensurability of $R(\Psi)$ rather than $R(\Phi)$.

\begin{proof}[Proof of $\ref{equiv.surj} \Rightarrow \ref{equiv.commensurable}$ when $R(\Psi)$ is commensurable]
    Assume that $\Psi$ has commensurable cycles. By \cref{commensurable_to_homogeneous}, we may assume that $\Psi$ is homogeneous with common ratio $r$.
    
    Fix some cycle $C$ in $(V^X, E^X)$. We may consider $C$ as a walk from some $v \in V^X$ to itself. Let $u \in V^Y$ be arbitrary, and take $A_0$ to be an arbitrary cell of $\tilde X_u$ that is similar to $X_v$ (such an $A_0$ exists by the strong connectivity). For $n > 0$, let $A_n$ be the cell of $A_0$ along the walk $C^n$.
    
    By \cref{multipart_surj}, we can take an $\tilde L$-Lipschitz surjection $\tilde f_u : \tilde X_u \to Y_u$ minimizing $\Hs(\tilde X_u)$ for an arbitrary $u \in V^Y$. By \cref{measure_lin_cover}, there is some cell $B_n \subseteq \tilde f_u(A_n)$ of $Y_u$ such that $\diam B_n \ge \varepsilon \diam A_n$ for some constant $\varepsilon > 0$. Let $B_n=Y_P$, where $P$ is a walk ending in some $u_n$. Assuming for contradiction that $\tilde f_u^{-1}(B_n) \not\subseteq A_n$, we have $\Hs(\tilde f_u^{-1}(B_n) \cap A_n) < \tilde c \Hs(B_n)$. It follows from \cref{preimage_decomp} that by restricting $\tilde f_u$ to $\tilde f_u^{-1}(B_n) \cap A_n$ and scaling everything by $r_P^{-1}$, we obtain an $\tilde L$-Lipschitz surjection $\hat f : \hat X \to Y_{u_n}$ with $\hat X \in \mathcal X$ such that $\Hs(\hat X) < \tilde c \Hs(Y_{u_n})$. This contradicts the minimality of $\tilde c$, therefore, $\tilde f_u^{-1}(B_n) \subseteq A_n$.
    
    Using \cref{preimage_decomp}, we can see that $\tilde f_u^{-1}(B_n)$ is the disjoint union of some cells of diameter at least $c \diam B_n \ge c \,\varepsilon \diam A_n$ for some constant $c > 0$. Since these cells are all contained in $A_n$, there are finitely many possibilities up to similarity. In particular, it follows that $\Hs(\tilde f_u^{-1}(B_n))/\Hs(A_n)$ has finitely many possible values. Consequently, we can choose $n_1 < n_2$ such $B_{n_1}$ and $B_{n_2}$ are scaled copies of the same $Y_w$, and
    \[
    \frac{\Hs\zj[\big]{\tilde f_u^{-1}(B_{n_1})}}{\Hs(A_{n_1})} = \frac{\Hs\zj{\tilde f_u^{-1}(B_{n_2})}}{\Hs(A_{n_2})}.\]
    It follows that if $B_{n_i}$ is a level $m_i$ cell of $Y_u$, then
    \[r_C^{s(n_2 - n_1)} = \frac{\Hs(A_{n_2})}{\Hs(A_{n_1})} = \frac{\Hs(\tilde f_u^{-1}(B_{n_2}))}{\Hs(\tilde f_u^{-1}(B_{n_1}))} = \frac{\tilde c \Hs(B_{n_2})}{\tilde c \Hs(B_{n_1})} = \frac{r^{sm_2} \Hs(Y_w)}{r^{sm_1} \Hs(Y_w)} = r^{s(m_2 - m_1)}.\]
    Therefore, $r_C = r^{(m_2 - m_1)/(n_2-n_1)}$, which completes the proof.
\end{proof}

\section{Construction of the surjection} \label{sec:comm_to_surj}

To construct the surjection, we will first prove the following variant of Xi and Xiong's mass decomposition lemma \cite[Lemma 3.4]{XiXiong}.

\begin{Lemma} \label{mass_decomp}
    Let $\Phi$ be a primitive GD-IFS with an attractor $X$, and assume that $\Hs(X) = 1$. Let $p = r^s$, and let $\Omega \subseteq \Z[p] \cap (0, \infty)$ be a finite set. Denote by $\mathcal B_k$ the algebra of sets generated by the level $k$ cells of $X$. Then there exists a positive integer $k_0$ such that for any set $B \in \mathcal B_k$ and positive numbers $b_1, \dots, b_m \in p^k\Omega$ such that $\Hs(B) = b_1 + \dots + b_m$ there exists a partition $\{B_1, \dots, B_m\}$ of $B$ such that $\Hs(B_i) = b_i$ and $B_i \in \mathcal B_{k + k_0}$ for every $i$.
\end{Lemma}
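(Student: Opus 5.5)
Reduce by scaling: the measure of a level $k$ cell $X_P$ with endpoint $j$ is $p^k \mv_j$. Dividing all masses by $p^k$, the statement becomes the following: for a finite multiset of vertices $(j_1,\dots,j_N)$ (the cells composing $B$) and $b'_i=b_i/p^k\in\Omega$ with $\sum_t \mv_{j_t}=\sum_i b'_i$, partition the union of level-$k_0$ subcells of the corresponding copies of $X_{j_t}$ into groups of masses $b'_i$. The problem is self-similar, so $k_0$ must be chosen independently of $k$. The catch is that $N$ and $m$ are unbounded, so a finite check is not enough and a uniform construction is needed.

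\emph{Step 1 (algebraic input).} Since $\Hs(X)=1\in\Q[p]$, \cref{vector_in_ring} gives $\mv_j\in\Q[p]$ for all $j$. Fix a common denominator $d\in\N$ with $d\mv_j\in\Z[p]$. Each $\omega\in\Omega$ lies in $\Z[p]$, so we can write $\omega=\sum_t a_t p^t$ with integers $a_t$. Since $p^{-1}$ is an algebraic integer (an eigenvalue of an integer matrix), we can use the relation $p^{-1}\mv=A\mv$ to express $p^{n}\cdot$(integer combination of the $\mv_j$) in terms of deeper levels. The key claim, following Xi and Xiong, is: for every $\omega\in\Omega$ there are $n_\omega$ and a nonnegative integer vector $\uv_\omega$ with $\omega=p^{n_\omega}\uv_\omega\mv$. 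In words, $\omega$ is the mass of a union of level $n_\omega$ cells. I would prove this via Perron--Frobenius: write $\omega=\qv\mv$ with $\qv\in\Z[p]^V$. Multiplying by $p^{-n}A^{n}$ we get $\omega=p^{n}(\qv' A^{n})\mv$ after representing $\qv$ suitably. Because $p^{-n}A^n$ converges to the rank-one projection onto $\mv$, the rescaled coefficient vector becomes eventually positive and large. The integrality issue is handled by expanding powers of $p$ through the minimal polynomial, up to a bounded error correctable by the $d$ factor.

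\emph{Step 2 (greedy filling).} Take $k_0$ large, larger than all $n_\omega$ plus a margin. Order the $b'_i$ and fill greedily. Enumerate the level $k_0$ subcells of $B$ (after rescaling) and assign them to $B_1,B_2,\dots$ in turn. The needed $\uv_\omega$ counts of vertex types must be available, so we need enough cells of each type. By primitivity, $A^{k_0}$ has entries at least proportional to $p^{-k_0}$. Hence every level-$0$ cell contains $\gtrsim p^{-k_0}$ subcells of each type, while each $b'_i$ requires only boundedly many level $n_\omega$ cells. These in turn correspond to $\uv_\omega A^{k_0-n_\omega}$ level $k_0$ cells.

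\emph{Step 3 (the main obstacle).} The hard part is the last piece(s): greedy assignment may leave a residual collection of level $k_0$ cells whose type-count vector differs from the required one, even though the total masses agree. I would handle this by exploiting that equal mass with integer count vectors $\xv,\yv$ means $(\xv-\yv)\mv=0$. I would also keep a reserve. Process pieces so that the remaining set always contains at least one full level-$0$ cell of every type in some bounded window, and refine the remaining cells to a deeper level $k_0$. Then a lattice argument applies: the set of count vectors $\zv\in\N^V$ with prescribed $\zv\mv$ that can be reached by refining is eventually all of them, since $\mv^\perp\cap\Z^V$ is handled by the fact that $A^{n}$ maps cells to eventually positive vectors. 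I expect making this uniform in $m$ to be the delicate point. The intended fix is to match pieces two at a time so that only a bounded-size final configuration remains, and to choose $k_0$ to cover the finitely many such bounded configurations.
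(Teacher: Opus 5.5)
Your ingredients do suffice for a \emph{bounded} configuration, and there they coincide with the paper's argument. Since $\mv_{v_0}=\Hs(X)=1$, one has $p^i=p^n e_{v_0}A^{n-i}\mv$ for $n\ge i$. So each $b'_i$ with $i<m$ can be written as $p^n\vv_i\mv$ with $\vv_i\in\Z^V$, and no denominator $d$ is needed. Defining $\vv_m=\vv A^n-\sum_{i<m}\vv_i$ gives $b'_m=p^n\vv_m\mv$ as well. Every $\vv_i$ is then an integer vector of positive mass, so $\vv_iA^l>0$ for large $l$ because $(pA)^l\to\mv\qv$. These positive vectors partition the subcells of relative level $n+l$. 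Note that the last piece only needs the right mass, which is automatic, so the residual issue in your Step 3 is purely one of nonnegativity.

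The genuine gap is uniformity in $m$ and in the number $N$ of level $k$ cells of $B$, which you flag but do not resolve.
\begin{itemize}
\item For a fixed $k_0$, the residual after $m-1$ greedy pieces is $\mathbf rA^{k_0-n}$ with $\mathbf r=\vv A^n-\sum_{i<m}\uv_i$. Its mass $p^{-n}b'_m$ is bounded, but nothing in your scheme bounds its entries independently of $N$ and $m$. Positivity of $\mathbf rA^l$ is only guaranteed once $l\gtrsim\log\|\mathbf r\|$, so the depth you need may grow with $B$.
\item Your lattice claim does not repair this. Refinement gives eventual positivity of integer vectors of positive mass, at a rate depending on their size; it does not give reachability of arbitrary count vectors.
\item ``Matching pieces two at a time'' is not an argument.
\end{itemize}
The missing idea is an exact splitting lemma, \cref{sum_decomp}: if $\sum_i a_i=\sum_j c_j>M$ with all $a_i\in\Omega$ and all $c_j\in\{\mv_v\}$, then both index families split into two nonempty parts with equal partial sums. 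This follows from Dickson's lemma. The pairs of multiplicity vectors in $\N^{\Omega}\times\N^{V}$ with equal weighted sums form a monoid whose minimal nonzero elements form a finite antichain. Hence any element whose total exceeds the largest total of a minimal one is a sum of two nonzero elements. Apply this to the level $k$ cells of $B$ and the numbers $b'_i$, and induct on $m$ with $k_0$ fixed. This reduces to the case $p^{-k}\Hs(B)\le M$, where $\vv$, $m$ and $(b'_1,\dots,b'_m)$ range over a finite set. Then $k_0$ can be taken as the maximum of the depths from the bounded case.
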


The proof will be essentially the same as in \cite{XiXiong} with some modifications for the graph-directed setting. We can reuse the following technical lemma without modification:
\begin{Lemma}[{\cite[Lemma 5.4]{XiXiong}}] \label{sum_decomp}
    Let $U$ and $V$ be finite sets of positive numbers. There exists a constant $M$ such that for any two finite sequences $(a_i)_{i\in \mathcal I}$ and $(b_j)_{j\in \mathcal J}$ such that
    \begin{enumerate}
        \item $a_i \in U$ and $b_j \in V$ for $i\in \mathcal I$ and $j \in \mathcal J$;
        \item $\sum_{i\in \mathcal I} a_i = \sum_{j \in \mathcal J} b_j > M$;
    \end{enumerate}
    there are partitions $\mathcal I = \mathcal I_1 \cup \mathcal I_2$ and $\mathcal J= \mathcal J_1 \cup \mathcal J_2$ such that
    \[\sum_{i \in \mathcal I_1} a_i = \sum_{j \in \mathcal J_1} b_j > 0, \qquad
    \sum_{i \in \mathcal I_2} a_i = \sum_{j \in \mathcal J_2} b_j > 0.\]
\end{Lemma}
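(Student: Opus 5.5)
The plan is to encode the two sequences by multiplicity vectors and reduce the statement to a finiteness property of the monoid of solutions of one linear equation. Write $U = \{u_1, \dots, u_p\}$ and $V = \{v_1, \dots, v_q\}$. To a pair of sequences $(a_i)_{i\in\mathcal I}$, $(b_j)_{j\in\mathcal J}$ we associate the vector $x = (x_1, \dots, x_p, y_1, \dots, y_q) \in \N^{p+q}$. Here $x_k = \#\ha{i}{a_i = u_k}$ and $y_l = \#\ha{j}{b_j = v_l}$. The equal-sum condition becomes
\[\sum_{k} u_k x_k = \sum_l v_l y_l.\]
Let $S \subseteq \N^{p+q}$ be the set of all solutions of this equation. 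Then $S$ is closed under addition. Moreover, if $y, z \in S$ and $y \le z$ componentwise, then $z - y \in S$, since $z-y$ is nonnegative and the equation is linear.

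First, I would take $\mathcal M$ to be the set of minimal elements of $S \setminus \{0\}$ with respect to the componentwise order on $\N^{p+q}$. The set $\mathcal M$ is an antichain in $\N^{p+q}$, so by Dickson's lemma it is finite. We may assume $\mathcal M$ is nonempty. This holds unless $S=\{0\}$, and in that case the statement is vacuous.

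Next, I would define
\[M = \max_{x \in \mathcal M} \sum_k u_k x_k.\]
Suppose the two sequences have common sum greater than $M$, and let $x \in S$ be their multiplicity vector. Then $x \ne 0$ and $x \notin \mathcal M$. Every nonzero element of $S$ dominates some element of $\mathcal M$, because strictly decreasing chains in $\N^{p+q}$ are finite. Hence there is a $y \in \mathcal M$ with $y \le x$ and $y \ne x$, and $z = x - y$ lies in $S \setminus\{0\}$.

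Finally, I would realize the splitting $x = y + z$ on index sets. For each $k$, put $y_k$ of the indices $i$ with $a_i = u_k$ into $\mathcal I_1$ and the remaining ones into $\mathcal I_2$. Do the same for $\mathcal J$, using the $V$-coordinates of $y$. Then $\sum_{\mathcal I_1} a_i = \sum_{\mathcal J_1} b_j$ because $y \in S$, and likewise for the index sets with subscript $2$ because $z \in S$. Both sums are positive: $y$ and $z$ are nonzero solutions, so each has a nonzero coordinate, and all elements of $U$ and $V$ are positive. Moreover, any nonzero solution must have a nonzero coordinate on both sides of the equation.

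I expect no real obstacle. The only point needing care is the finiteness of $\mathcal M$, which is exactly Dickson's lemma, together with the observation that $S$ is closed under differences of comparable elements.
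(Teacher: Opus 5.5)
Your argument is correct and complete. The paper does not prove this lemma; it quotes it from Xi and Xiong and uses it only as a black box in the proof of \cref{mass_decomp}, so there is no internal argument to compare yours against.

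Your proof reduces the lemma to the set $S$ of nonnegative integer solutions of $\sum_k u_k x_k = \sum_l v_l y_l$. You then take $M$ to be the largest common sum of a minimal nonzero solution, and there are only finitely many of these by Dickson's lemma. This is a standard self-contained argument, and each step holds:
\begin{enumerate}
\item A solution whose sum exceeds $M$ is nonzero and not minimal, so it strictly dominates some element of $\mathcal M$.
\item The difference of comparable solutions is again a nonzero solution.
\item Both parts have positive sums. A nonzero solution has positive weight on one side of the equation, and hence, by equality, on both.
\end{enumerate}

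Your approach gives a small bonus. Iterating the same step writes every solution as a sum of elements of $\mathcal M$. So the two sequences can be cut in one step into blocks of equal positive sum, each at most $M$. This could replace the induction on $m$ in the proof of \cref{mass_decomp}.

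One cosmetic point: you use $y$ both for the $V$-coordinates $y_l$ of the multiplicity vector and for the chosen minimal solution. This makes ``put $y_k$ of the indices'' ambiguous, so rename one of them.
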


\begin{Lemma} \label{int_decomp}
    Suppose that $\Hs(X_1) = 1$, and let $x\in \Z[p]$. For sufficiently large $k$, $x$ can be written in the form $p^k \vv \mv$ for some row vector $\vv \in \Z^n$.
\end{Lemma}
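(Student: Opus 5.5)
The idea is to use the eigenvector relation $\mv = pA\mv$ to move every term of $x$ to a common power $p^k$, while keeping integer coefficients. We need three facts. The normalization $\Hs(X_1)=1$ says that the first component of $\mv$ is $1$. We have $\mv = p A \mv$, which follows from the dust-like property as shown before \cref{matrix_primitive}. The adjacency matrix $A$ has nonnegative integer entries. Iterating the relation gives $\mv = p^{j} A^{j}\mv$ for every $j \ge 0$.

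Since $x \in \Z[p]$, we write $x = \sum_{j=0}^{d} c_j p^j$ with $c_j \in \Z$. Let $e_1$ denote the first standard row vector. Then $e_1 \mv = \Hs(X_1) = 1$, so $p^j = p^j e_1 \mv$ for each $j$. Now fix any $k \ge d$. For each $0 \le j \le d$, apply $\mv = p^{k-j}A^{k-j}\mv$ to get
\[
p^j = p^j e_1 \mv = p^j e_1 \, p^{k-j} A^{k-j} \mv = p^k \, e_1 A^{k-j} \mv .
\]
Summing over $j$ yields $x = p^k \vv \mv$ with
\[
\vv = \sum_{j=0}^{d} c_j \, e_1 A^{k-j}.
\]
This $\vv$ lies in $\Z^n$, because $A$ is an integer matrix and the $c_j$ are integers. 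Hence every $k \ge d$ works, which proves the lemma.

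The only points to check are routine. First, the relation $\mv = pA\mv$ must hold for the homogeneous (primitive) GD-IFS under consideration; this is exactly the computation preceding \cref{matrix_primitive}, with $p = r^s$. Second, the indexing must be consistent, namely that $X_1$ is the attractor whose measure is normalized to $1$. No step is a genuine obstacle. The one place needing care is that we need $k \ge \deg$ of the chosen representation of $x$, which is why the lemma says "sufficiently large $k$". Negativity of some $c_j$ is harmless here, since the lemma only asks for $\vv \in \Z^n$, not $\vv \in \N^n$.
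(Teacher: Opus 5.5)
Your proof is correct and follows essentially the same route as the paper's. Both write $x=\sum_j c_j p^j$ with $c_j\in\Z$, use the first standard row vector $\uv$ with $\uv\mv=\Hs(X_1)=1$, and apply $\mv=(pA)^{k-j}\mv$ to bring every term to the common power $p^k$. This gives the integer vector $\uv\sum_j c_j A^{k-j}$.
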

\begin{proof}
    We can write $x = \sum_{i=0}^k c_i p^i$ with $c_i \in \Z$. By adding zero terms, the value of $k$ can be increased arbitrarily. Setting $\uv=(1, 0, \dots, 0)$, we have $\uv \mv = 1$. Notice that
    \begin{align*}
        x &= \sum_{i=0}^k c_i p^i = \sum_{i=0}^k c_i p^i \uv \mv =
        \sum_{i=0}^k c_i p^i \uv (p A)^{k - i} \mv =\\&=
        \sum_{i=0}^k c_i p^k \uv A^{k - i} \mv =
        p^k \zj*{\uv \sum_{i=0}^k c_i A^{k - i}} \mv.
    \end{align*}
    Since $A$ is an integer matrix, it is clear that $\vv := \uv \sum_{i=0}^k c_i A^{k - i}$ is an integer vector.
\end{proof}

\begin{proof}[Proof of \cref{mass_decomp}]
    We can write $B=\cup_{j=1}^l B_j$, where the $B_j$ are (pairwise distinct) level $k$ cells of $X$. Each $B_j$ is a copy of some $X_{i_j}$ scaled by $r^k$, therefore
    \[\Hs(B_j) = r^{ks} \Hs(X_{i_j}) = p^k \Hs(X_{i_j}).\]
    It follows that
    \[\sum_{i=1}^{m} p^{-k} b_i = p^{-k} \Hs(B) = \sum_{j=1}^l \Hs(X_{i_j}).\]
    Notice that $p^{-k} b_i \in \Omega$ for every $i$. Let $M$ be the constant from \cref{sum_decomp} with $U=\Omega$ and $V = \{\Hs(X_1), \dots, \Hs(X_n)\}$. We may assume that $p^{-k} \Hs(B) \le M$, otherwise, we can partition the sums using \cref{sum_decomp} and proceed by induction on $m$ (keeping $k_0$ constant).

    Let $\vv = (v_1, \dots, v_n)$, where $v_i$ is the number of scaled copies of $X_i$ among the level $k$ cells of $B$. Clearly $\vv \mv = p^{-k} \Hs(B) \le M$. Since the components of $\vv$ are nonnegative integers, each component has finitely many possible values. It follows that $\vv$ has finitely many possible values, therefore, it is enough to find some $k_0$ for a fixed $\vv$. We can also see that $m$ is bounded from above, since $m \min \Omega \le \sum_{i=1}^m p^{-k} b_i \le M$. Consequently, the vector $p^{-k}(b_1, \dots, b_m)$ has finitely many possible values, so $k_0$ may depend also on $p^{-k} b_i$.

    Now fix some $\vv$ and $b_1, \dots, b_m$. Since $p^{-k} b_i \in \Z[p]$, it follows from \cref{int_decomp} that there is a $k_1$ such that for $1 \le i < m$, there is a vector $\vv_i$ with the property $p^{-k} b_i = p^{k_1} \vv_i \mv$. We choose $\vv_m$ so that $\sum_{i=1}^m \vv_i = \vv A^{k_1}$. Notice that
    \[\sum_{i=1}^m p^{-k} b_i = p^{-k} \Hs(B) = \vv \mv = p^{k_1} \vv A^{k_1} \mv =
    \sum_{i=1}^m p^{k_1} \vv_i \mv,\]
    hence the formula $p^{-k} b_i = p^{k_1} \vv_i \mv$ holds even for $i=m$.

    Let $\qv$ be the left eigenvector of $A$ satisfying $\qv A = p^{-1} \qv$ and $\qv \mv = 1$. Since $p^{-1}$ is the Perron root of $A$, we have $(p A)^l \to \mv \qv$ as $l \to \infty$ (see \cite[Theorem 8.5.1]{PF1}). It follows that
    \[\vv_i (p A)^l \to \vv_i \mv \qv = p^{-k_1-k} b_i \qv > 0.\]
    If $l$ is large enough, then $\vv_i (p A)^l>0$, or equivalently, $\vv_i A^l > 0$. For some $l=k_2$, the inequality $\vv_i A^{k_2}> 0 $ holds for every $i$. Set $k_0 = k_1 + k_2$ and $\uv_i = \vv_i A^{k_2}$. We can check that
    \[\sum_{i=1}^m \uv_i = \zj[\bigg]{\sum_{i=1}^m \vv_i} A^{k_2} = \vv A^{k_1 + k_2} = \vv A^{k_0}.\]
    Since $\vv$ gives the number of level $k$ cells of $B$, it is easy to see from the definition of $A$ that $\vv A^{k_0}$ gives the number of level $(k+k_0)$ cells. Since $\sum_{i=1}^m \uv_i = \vv A^{k_0}$ and each $\uv_i$ is a vector of positive integers, we can partition the level $(k+k_0)$ cells of $B$ according to the vectors $\uv_i$. Let $B_1, \dots, B_m$ be the unions of these cells. We can see that
    \[\Hs(B_i) = p^{k + k_0} \uv_i \mv = p^{k + k_1} \vv_i (p A)^{k_2} \mv =
    p^{k + k_1} \vv_i \mv = b_i.\]
    This shows that the partition $\{B_1, \dots, B_m\}$ has the required properties.
\end{proof}

\begin{Lemma} \label{n_copies_level}
    Let $X$ be an attractor of a dust-like strongly connected GD-IFS $\Phi$. If $X$ contains more than one point, then for every $m$ there is a $k$ such that at least $m$ level $k$ cells of $X$ are scaled copies of $X$.
\end{Lemma}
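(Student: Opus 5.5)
Write $X = X_{v_0}$ for some vertex $v_0 \in V$. The plan is to find cells of $X$ that are scaled copies of $X_{v_0}$, i.e.\ cells $X_P$ with $P$ a walk from $v_0$ back to $v_0$, and to show that at a single level $k$ there are at least $m$ of them.

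\emph{Step 1: a vertex with two routes.} Since $X$ has more than one point and the graph is strongly connected, some vertex must have at least two outgoing edges reachable from $v_0$. Indeed, suppose every vertex had exactly one outgoing edge. Then each $X_u$ would be the image of a single $X_{u'}$ under a contraction. Following the unique outgoing walk from any $u$, it eventually cycles, so $X_u$ is a fixed point of a composition of contractions. Such a fixed set is a single point, since its diameter is multiplied by a factor $<1$. Strong connectivity then makes every $X_u$ a single point, contradicting the hypothesis. So fix a vertex $w$ with two distinct edges $e_1 \in E_{w,w_1}$ and $e_2 \in E_{w,w_2}$.

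\emph{Step 2: two distinct walks of equal length.} By strong connectivity choose a walk $R$ from $v_0$ to $w$, and walks $T_i$ from $w_i$ back to $v_0$. This gives two distinct cyclic walks $C_1 = R e_1 T_1$ and $C_2 = R e_2 T_2$ from $v_0$ to $v_0$, with lengths $a$ and $b$. Replace them by $D_1 = C_1^{b}$ and $D_2 = C_2^{a}$. Both have length $ab$, and they are distinct as words because they diverge at position $|R|+1$. Since neither is a prefix of the other, the cells $X_{D_1}$ and $X_{D_2}$ are disjoint by the dust-like property. This is the step needing the most care: we need equal lengths so that the cells sit on the same level, and distinctness so that they are different cells.

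\emph{Step 3: counting.} Choose $t$ with $2^t \ge m$ and set $k = t\,ab$. The $2^t$ concatenations $D_{i_1} D_{i_2} \cdots D_{i_t}$ with $i_j \in \{1,2\}$ are pairwise distinct walks of length $k$ from $v_0$ to $v_0$. Distinctness holds because $D_1 \neq D_2$ and all blocks have the same length, so two different index sequences differ in some block. Each of these walks $P$ gives a level $k$ cell $X_P = S_P(X_{v_0})$, which is a scaled copy of $X$, and distinct walks of the same length give distinct level $k$ cells. Hence there are at least $2^t \ge m$ such cells, as required.
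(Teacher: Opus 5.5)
Your proof is correct and follows essentially the same route as the paper: rule out the case where no vertex has two outgoing edges, take two cyclic walks of lengths $a$ and $b$ that diverge at a branching vertex, equalize their lengths via $C_1^b$ and $C_2^a$, and form all $2^t$ concatenations. The differences are cosmetic. You handle the non-branching case via $\diam X_u = r_C \diam X_u$ rather than the paper's fixed-point and uniqueness-of-attractors argument, and you base the cycles at $v_0$ rather than inserting cycles at the branching vertex between paths from and to $v_0$; you should also mention the degenerate single-vertex graph with no edges, where $X=\emptyset$.
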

\begin{proof}
    First, consider the case that no vertex has more than one outgoing edge. Since the graph was assumed to be strongly connected, this is possible only if it is a single directed cycle (or a single vertex with no edges, but in this case, $X$ is empty). For each vertex $v$, let $x_v$ be the unique fixed point obtained by composing the similarities along the cycle starting from $v$. We will check that the sets $\{x_v\}$ are the attractors of $\Phi$. Let $e \in E_{u,v}$ be an arbitrary edge, let $P$ be the path from $v$ to $u$. Then $S_P(S_e(x_v)) = x_v$, therefore,
    \[S_e(S_P(S_e(x_v))) = S_e(x_v).\]
    By the uniqueness of the fixed point of $S_e \circ S_P$, it follows that $x_u=S_e(x_v)$, which shows that the sets $\{x_v\}$ are indeed the attractors. However, by the uniqueness of the attractors, $X=\{x_v\}$ for some $v$, contradicting the assumption.
    
    We may thus assume that there is a vertex $v$ with at least two outgoing edges. Since the graph is strongly connected, this implies that there are cyclic walks $P$ and $Q$ starting from $v$ such that their first edges are different. If $|P|=a$ and $|Q|=b$, then the cyclic walks $P^b$ and $Q^a$ are different cyclic walks of length $ab$. It follows that there are at least $2^l$ different cyclic walks of length $abl$ starting from $v$. Let $u$ be the vertex corresponding to $X$. We can insert the $2^l$ cyclic walks between a path from $u$ to $v$ and a path from $v$ to $u$. These cyclic walks all have the same length, so we are done if $l$ is chosen such that $2^l \ge m$.
\end{proof}

\begin{proof}[Proof of $\ref{equiv.commensurable} \Rightarrow \ref{equiv.surj}$]
    By \cref{commensurable_to_homogeneous}, we may assume that $\Phi$ and $\Psi$ are both homogeneous with similarity ratios $r^X$ and $r^Y$. By the commensurability assumption, there exist positive integers $a$ and $b$ such that $(r^X)^a = (r^Y)^b$. Using \cref{homogeneous_power}, we may replace the ratios by powers, so from now on, we will assume that $\Phi$ and $\Psi$ have the same similarity ratio $r$.

    We may assume that $\Hs(X) = \Hs(Y) = 1$. Let $\Omega_0$ be the set of the Hausdorff measures of the attractors of $\Psi$. Let $p=r^s$. It follows from \cref{vector_in_ring} that $\Omega_0 \subseteq \Q[p]$. Since $\Omega_0$ is finite, there exists some integer $c > 0$ such that $\Omega := c \Omega_0 \subseteq \Z[p]$. Let $k_0$ be the constant from \cref{mass_decomp} for $X$ and $\Omega$. By the assumption $s > 0$, $X$ is not a single point, so by \cref{n_copies_level}, there is an $l_{-1}$ such that there are at least $c$ scaled copies of $X$ among the level $l_{-1}$ cells of $X$. Let $X'$ be the union of the $c$ copies.

    Let $l_m = l_{-1} + (m + 1) k_0$, and let $\mathcal C^X_m$ be the set of level $l_m$ cells contained in $X'$. Similarly, let $\mathcal C^Y_m$ be the set of level $m k_0$ cells of $Y$. Our goal is to construct surjective maps $f_m : \mathcal C^X_m \to \mathcal C^Y_m$ for $m \ge 0$ such that whenever $B \in \mathcal C^X_m$, $B' \in \mathcal C^X_{m+1}$ and $B'\subseteq B$, we have $f_{m+1}(B') \subseteq f_m(B)$. Assuming that we have constructed such maps, each point $x \in X'$ is contained in some decreasing sequence $B_0 \supseteq B_1 \supseteq \dots$ such that $B_m \in \mathcal C^X_m$. Then the sets $f_m(B_m)$ are also decreasing by the construction of $f_m$, and since they are closed and their diameters converge to zero, their intersection is a single point, which we will call $f(x)$. We have thus defined a map $f : X' \to Y$. To see that $f$ is Lipschitz, notice that if some $x$ and $x'$ are contained in $B \in \mathcal C^X_m$, but they are in different level $l_{m+1}$ cells, then their distance is at least $\delta r^{l_m}$ for some constant $\delta > 0$. Their images are both contained in the level $m k_0$ cell $f_m(B)$, which has diameter at most $D r^{m k_0}$ for some constant $D$. This shows that $f$ is Lipschitz with Lipschitz constant at most $D r^{-l_0} / \delta$. It is easy to see from the surjectivity of $f_m$ that the image of $f$ is dense, so by the compactness of $X'$, it is the whole $Y$. Finally, since $X' \subseteq X$ is clopen, $f$ stays Lipschitz if we set it to some constant point in $Y$ on $X \setminus X'$.

    We will now construct the functions $f_m$ with the following additional property: for every $C \in \mathcal C^Y_m$, we have
    \[\Hs(f_m^{-1}(C)) = c p^{l_{-1}} \Hs(C).\]
    This clearly guarantees the surjectivity of $f_m$. We can also check that this holds for $f_0$, which must map every cell in $\mathcal C^X_0$ to $Y$. Assume that we have already constructed $f_m$. Let $C \in \mathcal C^Y_m$, and let $C_1, \dots, C_n$ be the sets in $\mathcal C^Y_{m+1}$ contained in $C$. We want to construct a partition $B_1, \dots, B_n$ of $B=f_m^{-1}(C)$ such that $B_i \in \mathcal B_{l_{m + 1}}$ and
    \[\Hs(B_i)=c p^{l_{-1}} \Hs(C_i)\]
    for every $i$. Since $\Hs(C_i) \in p^{(m+1) k_0} \Omega_0$, we can see that
    \[\Hs(B_i) \in c p^{l_{-1} + (m+1) k_0} \Omega_0 = p^{l_m} \Omega.\]
    So such a partition indeed exists by \Cref{mass_decomp}. We can now define $f_{m+1}$ by mapping each level $l_{m+1}$ cell of $B_i$ to $C_i$. It is easy to check that this map has all the required properties.
\end{proof}

\section*{Acknowledgements}

The author would like to thank his supervisors Márton Elekes and Tamás Keleti for their guidance and many helpful discussions. The author is also grateful to Siqi Wang for her comments on an earlier version of the paper.

The author was supported by the National Research, Development and Innovation Office -- NKFIH, grants nos. 146922 and 152822.

\section*{Declaration of generative AI and AI-assisted technologies}

OpenAI's ChatGPT was used to obtain feedback on earlier drafts of this manuscript, including possible mathematical inaccuracies, missing details, inconsistencies in constants and notation, typographical errors, and clear language issues. The mathematical results, arguments, proofs, and the final text of the manuscript are the author's own. The author reviewed and edited all AI-assisted feedback and takes full responsibility for the content of the paper. This declaration was drafted with the assistance of ChatGPT and edited by the author.

\printbibliography

\end{document}